\numberwithin{equation}{section}
\newtheorem{theorem}{Theorem}[section]
\newtheorem{proposition}[theorem]{Proposition}
\newtheorem{lemma}[theorem]{Lemma}
\newtheorem{definition}[theorem]{Definition}
\newtheorem{assumption}[theorem]{Assumptions}
\theoremstyle{definition}
\newtheorem{remark}[theorem]{Remark}
\newtheorem{example}[theorem]{Example}
\numberwithin{equation}{section}
\newenvironment{proofof}[1]
{\smallskip\noindent{\textbf{Proof~of~#1.}}
\hspace{1pt}}{\hspace{-5pt}{\nobreak\nobreak\hfill\nobreak
$\square$\vspace{2pt}\par}\smallskip\goodbreak}
\newcommand{\Lip}{\mathbf{Lip}}
\newcommand{\dst}{\displaystyle}
\newcommand{\R}{\mathbb{R}}
\newcommand{\diff}{\mathop{}\!\mathrm{d}}
\newcommand{\doublewidetilde}[1]{{%
  \mathpalette\double@widetilde{#1}%
}}
\newcommand{\double@widetilde}[2]{%
  \sbox\z@{$\m@th#1\widetilde{#2}$}%
  \ht\z@=.9\ht\z@
  \widetilde{\box\z@}%
}
\author{Christian Düll}
\address{Institute of Mathematics, Heidelberg  University,\\ 69120 Heidelberg, Germany}
\email{duell@math.uni-heidelberg.de}
\thanks{\noindent The research of AMC and CD was supported by the Germany's Excellence Strategy EXC-2181/1 - 390900948 (the Heidelberg STRUCTURES Excellence Cluster) and the European Research Council (ERC) under the European Union's Horizon 2020 research and innovation programme (project PEPS, no. 101071786)}
\author{Piotr Gwiazda}
\address{{\it Piotr Gwiazda:}Institute of Mathematics of Polish Academy of Sciences,\\ Jana i J\k edrzeja \'Sniadeckich 8, 00-656 Warsaw, Poland}
\email{pgwiazda@mimuw.edu.pl}
\thanks{\noindent PG was supported by National Science Center, Poland through project no. 2018/31/B/ST1/02289.}
\author{Anna Marciniak-Czochra}
\address{Institute of Mathematics, Heidelberg  University,\\ 69120 Heidelberg, Germany\\
anna.marciniak@iwr.uni-heidelberg.de}
\email{anna.marciniak@iwr.uni-heidelberg.de}
\author{Jakub Skrzeczkowski}
\address{Faculty of Mathematics, Informatics and Mechanics,\\ University of Warsaw, Stefana Banacha 2, 02-097 Warsaw, Poland}
\email{jakub.skrzeczkowski@student.uw.edu.pl}
\thanks{\noindent JS was supported by the National Science Centre, Poland, grant no. 2017/26/M/ST1/00783}
\begin{document}

\title{\small Structured Population Models on Polish spaces: A unified Approach including Graphs, Riemannian Manifolds and Measure Spaces to describe Dynamics of Heterogeneous Populations}

\begin{abstract}
\tiny This paper presents a mathematical framework for modeling the dynamics of heterogeneous populations.  Models describing local and non-local growth and transport processes, dependent on dynamically changing population structures, appear in a variety of applications such as crowd dynamics, tissue regeneration, cancer development, and coagulation-fragmentation processes. The current body of literature regarding mathematical modeling presents common challenges to mathematicians due to the multiscale nature of the structures that underlie self-organisation and control within complex, heterogeneous systems. In various applications, similar, abstract mathematical concepts arise through problem formulation and the assimilation of mathematical depictions into the language of measure evolution on a multi-faceted state space.  In view of the above observations, we propose an overarching mathematical framework for nonlinear structured population models on abstract metric spaces, which are only assumed to be separable and complete. 
  To achieve this, we exploit the structure of the space of non-negative Radon measures under the dual bounded Lipschitz distance (flat metric), a generalization of the Wasserstein distance that is capable of addressing non-conservative problems. The formulation of models on generic metric spaces facilitates the study on infinite-dimensional state spaces or graphs with a combination of discrete and continuous structures. This opens up exciting possibilities for modeling single cell data, crowd dynamics or coagulation-fragmentation processes.
\end{abstract}

\maketitle
\setcounter{tocdepth}{1}

\noindent
\textbf{Keywords}: structured population model, measure differential equation, Polish spaces, dual bounded Lipschitz distance, flat metric.

\noindent
\textbf{AMS Subject Classification}: 26A16, 28A33, 35B30, 35F10, 35L50, 35R02, 46E27

\section{Introduction}
This paper is devoted to the development of a mathematical framework for structured population models describing the evolution of  systems of individuals that differ with respect to functionally relevant features. The latter may be decoded at different levels of description and defined in a generic metric space. The analysis of the relevant features allows to divide the system into functional subsystems, which may exist at different scales, or to identify continuous transitions in the state space, whose tempo may have significant effects on the emerging dynamics.

The models we study are generic nonlinear structured population models that take into account (see equation \eqref{gen_model_Rd} and formal details given in Section 2):
\begin{itemize}
\item[-] Within-system transitions between different states (directional transitions, diffusion-type processes, etc);
\item[-] Non-linear feedbacks within the system, which can be divided into local and non-local interactions of its individuals;
\item[-] External signalling (environmental, niche signalling, source/inflow terms);
\item[-] Growth/decay processes, which can also be regulated by individual states (local) or by the system (non-local).
\end{itemize}

Motivated by the applications discussed below, we aim to provide a rigorous mathematical framework allowing the formulation of such models in terms of the evolution of Radon measures on a a fairly general metric space. This requires, on the one hand, the generalization of the structured population model \eqref{gen_model_Rd}, which allows the description of the transport of measures on a metric space and, on the other hand, the identification of a suitable structure of the metric space that is sufficient for the well-posedness of the model and, at the same time, covers a broad class of applications. 

\subsection{Biological motivation and the modeling context} 

Our motivation for the development of mathematical approaches to study the dynamics of heterogeneous systems of individuals is primarily related to dynamics and control of cell systems. In addition, we refer to modeling methodologies for crowd dynamics, coagulation-fragmentation processes or other applications concerning emergent dynamics of systems of individuals.

All these applications are linked by the need to develop modeling approaches that account for inter-individual heterogeneity at different, multi-scale levels of description. This challenge for mathematicians has currently been highlighted in the context of several applications, e.g. in recent review articles on perspectives on mathematical modeling of collective dynamics \cite{BellomoReviews} or on mechanistic modeling of stem cell-based systems \cite{DANCIU2023203849}.

{\it Dynamics of heterogeneous cell systems.}
State-of-the-art approaches to study the dynamics of hierarchical cell populations are based on ordinary differential equations  or on stochastic modeling, describing birth-death processes and transitions between different discrete cell states \cite{Spector2017StemCF}. Such compartmental models significantly contributed to the quantification of stem cell traits such as proliferation, self-renewal or quiescence in developmental and regeneration processes and in cancer \cite{Marciniak_Stiehl,Stiehl2019HowTC}. They provided insights into the role of systemic and micro-environmental feedback loops and shed light on heterogeneity of clonal evolution and cell-cell interactions \cite{Busse2016,Stiehl_Biologydirect,Lo2009,Pazdziorek2014,Manesso2013,STUMPF2017268,DANCIU2022103819,Bogdan2014}.

 One of the current challenges for mathematical methods for modeling and model-based data analysis of cell systems concerns data from single cell sequencing (single cell data) that are becoming one of the main tools in the analysis of developmental, regeneration and cancer processes.  In particular, the great advances in the experimental technology for acquiring the cell transcriptomics data allow quantifying the individual cell states in a high-dimensional space of transcriptomes, and thus provide unprecedented insights into the heterogeneity of cell populations
which were previously thought to be homogeneous. Statistical data analysis ordering transcriptomes along a one-dimensional “pseudotime” curve, accompanied by in silico pathway analysis and functional assays, show a transcriptional proximity progression between some functionally different cell states. This, in particular, suggests the existence of some continuous cell state transitions \cite{Theis,Theis16}. 
Such continuous transitions can be modelled using the classical structured population models in the form of transport-type partial differential equations (PDE) or delay differential equations. 
However, the single cell data analysis also points to several aspects that cannot be directly studied in a classical PDE setting, such as co-existence of continuous and discrete (jump) transitions, loops or multiple-structures. Consequently, linking the single cell data to mechanistic mathematical modeling of the underlying processes requires new formulation of the models admitting the envisioned complexity of the cell state space \cite{DANCIU2023203849}.

{\it Crowd dynamics and traffic models.} 
Another topic, which leads to a similar class of models and similar problems in their identification, concerns the dynamics of heterogeneous crowds. In this context, the heterogeneous behaviour arises from psychological or physiological aspects \cite{Bertozzi2015ContagionSI,BellomoReviews,Psych_Crowd2015}, and the functional subsystems may be defined by different walking strategies or walking abilities \cite{Bellomo2019,Bellomo2017,S0218202522500415}.  As discussed in the recent review and perspectives paper Ref. \cite{Bellomo_crowd_review}, the modeling approach needs to take into account features encoded at multiple levels, and the macroscopic dynamics emerge from a collective learning ability that should be taken into account as it progressively modifies the rules of interaction at the individual level.
Depending on the specific scientific questions, identification of the structural variables may lead to hybrid models if both kinetic and macroscopic structures are used \cite{S0218202521400030}. For instance, the velocity direction may be identified using kinetic models while the speed is heuristically modelled as a function of the local population density \cite{Bellomo_Hybrid,Bellomo2016}. Another example is the study of the spread of epidemics in human crowds where the kinetic model for crowd dynamics is coupled with a model of epidemic contagion \cite{S0218202520400126,Kim2021,S0218202521400066}. 

Consequently, the greatest challenge in building a model of structured population dynamics  may be the description of the kinematics of the model, i.e., of the distribution of individual trajectories. Mathematical models of crowd dynamics and the related pedestrian flows \cite{MR2771664,MR3308728} are also formulated in the language of measure evolution \cite{MR3965293,Piccoli_book,100797515}, namely in the form of measure differential equations  \cite{MR4206990,MR3961299,MR4026977}. 
However, so far such models have only been considered in Euclidean spaces which potentially excludes applications based on different geometries or curvatures. In this sense, the framework introduced in Ref. \cite{our_book_ACPJ} and this paper seems to be a promising generalization for both the underlying state space, as well as additional model functions accounting for different effects \cite{MDE_nonlinear_growth}.

{\it Coagulation-fragmentation models}. Another class of structured population models studied in terms of the evolution of measures are coagulation-fragmentation models applied to describe the dynamics of a collection of many particle in physics, chemistry and biology, in particular of oceanic phytoplankton \cite{1029962598,MR4330732,07362990008809704}. Depending on the application, such models account for discrete or continuous structures. A transfer of the model formulation to the space of measures enabled to unify the formerly separated disrete and continuous approaches with the advantageous side effect of allowing for solutions to be mixtures of both \cite{MR4330732}.

\subsection{Choice of the state space motivated by applications}
\label{section:examples}
This section is devoted to discussion of some examples of the problems that go beyond the Euclidean state space. We present two examples that demand a Polish space since they are inherently infinite dimensional and an additional example referring to a proper metric space, which is interesting due to its importance in geometry.
\begin{example}
\label{example:pseudotime distribution}
A natural application of the new setting are structured population models defined on functions or rather trajectories instead of single individuals. In this case one considers a metric space of the form
    \begin{align}
        \label{example trajectory space}
        (S,d)=\left(C^0([0,T];\mathcal{X}), \|\cdot\|_{\infty}\right),
    \end{align}
with some target space $\mathcal{X}$, e.g. $\mathbb{R}^d$ or $\mathbb{N}^d_0$. The space $(S,d)$ is separable and complete but not proper so that the currently available theory cannot be applied. Note that instead of \eqref{example trajectory space}, one can also study the space $S = L^p([0,T];\mathcal{X})$ to account for discontinuous trajectories. 

As an illustration, consider the biological field of inferring the developmental trajectory of cellular differentiation from single-cell RNA-sequencing data (scRNA-seq data). It is assumed that during differentiation a single cell follows a continuous path in some \textbf{gene expression space} $\mathcal{X}$, the so called 
\textbf{individual cell trajectory}. Then all possible trajectories form a metric space of the form \eqref{example trajectory space}. Due to heterogeneity,  cells of a given cell type do not necessarily share a single developmental trajectory, but rather follow a \textbf{cell type specific trajectory distribution} $\mu\in \mathcal{M}^+(C^0([0,T];\mathcal{X}))$, or $\mu\in \mathcal{P}(C^0([0,T];\mathcal{X}))$ if we are only interested in relative counts. Due to biological processes,  external influences or fast epigenetic mutations \cite{Clairambault2015} it can be assumed that the trajectory distribution is eligible to change. 
\end{example}

\begin{example}
For the next application assume that we do not consider a cell type specific trajectory distribution (see Example \ref{example:pseudotime distribution}) but rather take its average/ median to get the so called \textbf{pseudotime trajectory} $\gamma:[0,T]\to\mathcal{X}$. By taking the average, many cells roughly follow the differentiation process prescribed by $\gamma$. In this example we are interested in the pseudotemporal development of a specific gene. In other words, for each pseudotime point $t$ the gene is expressed with some intensity, e.g. absolute gene counts, in all cells following $\gamma$. Cell heterogeneity rather implies a gene expression distribution among all cells, $\mu_t\in \mathcal{M}^+(S)$ with $S=\mathbb{N}_0$ or $S=\mathbb{R}^+$. 

In the case of several genes $i=1,...,N$ along the  trajectory $\gamma$, there is a distribution $\mu_t^i\in \mathcal{M}^+(S)$ for each pseudo\-time point $t$ and each gene $i$. Then the evolution of all gene distributions along the trajectory $\gamma$ can be described by a measure on $\mathcal{M}^+(S)$, e.g. in the discrete case here by a measure of the form
    \begin{align*}
        M_t^N=\sum_{i=1}^N\alpha_iD_{\mu_t^i}\in \mathcal{M}^+(\mathcal{M}^+(S)),
    \end{align*}
where we use $D_{\mu}$ to denote the Dirac measure on $\mathcal{M}^+(S)$ located in $\mu$. In the case of infinitely many genes more elaborate measures on $\mathcal{M}^+(S)$ could occur, e.g. measures without atoms. Although it is possible to model such objects by products of measure spaces, the number of genes $N$ may be very large and thus it might be convenient to consider the limit of $M_t^N$ in $\mathcal{M}^+(\mathcal{M}^+(S))$ as $N\to \infty$. The latter can be an arbitrary element of $\mathcal{M}^+(\mathcal{M}^+(S))$. 
\end{example}

\begin{example}
\label{example:riemannian geometry}
The setting of this paper can also be used to consider the evolution of measures on specific Riemannian manifolds. Let $S$ be a finite dimensional smooth manifold and let $g$ be a Riemannian metric so that the pair $(S,g)$  is a connected and geodesically complete Riemannian manifold. We refer to Chapter 6 in Ref. \cite{Lee} for precise definitions and more information on Riemannian geometry. Well-known examples of connected and geodesically complete Riemannian manifolds are the torus, the sphere or the hyperbolic space. If we equip $S$ with the corresponding Riemannian distance function $d_R$, which measures the length of the shortest path between two points, $(S,d_R)$ becomes a metric space. According to the theorem of Hopf-Rinow \cite{jost}, the space $(S,d_R)$ is even proper (see Remark \ref{rem:proper}) so that we can conclude separability and completeness by Proposition A.15 in Ref. \cite{our_book_ACPJ}.
In particular, the setting of Section~\ref{section:general case} can be used to formulate structured population models on $(S,d_R)$. Such models appear for example in Ref. \cite{MR4206990,MR3965293,rossi2016control}.
\end{example}

 \subsection{Analytical setting and current challenges}

 A promising framework for analysis of structured population systems is offered by the recently developed mathematical theory of PDE models defined in the space of Radon measures \cite{ackleh2020mathematical,MR4330732,MR4126768,Carrillo_Colombo_Gw_U,MR3150768,evers2015mild,MR3507552,farkas2019asymptotic,GLMC,GMC,GMT_Measures_under_Flat_Norm,MR4066016,MR4060809,MR4219146,MR4292766} and a related approach of measure differential equations \cite{MR4206990,MDE_nonlinear_growth,MR3961299,MR4026977}. To account for both discrete and continuous cell distributions as well as a possible non-Euclidean structure of the state space such as graphs, one needs to go beyond $\R^d$ and define models on a reasonably generic metric space. One choice of the latter would be proper metric spaces (see Remark \ref{rem:proper}) which include networks, graphs or complete Riemannian manifolds (see Example \ref{example:riemannian geometry}).  However, if the underlying metric space $S$ is a normed vector space, then basic functional analysis implies that a proper space $S$ is finite dimensional. In order to avoid this limitation, we claim that separability and completeness are already sufficient to define a well-posed model on $S$, which allows considering infinite-dimensional state spaces. 

An additional advantage of formulating the measure on separable and complete metric spaces, so-called Polish metric spaces, is that it provides a link to concepts and methods developed in stochastic modeling. In particular, the theory of concentrating Feller operators offers promising results on asymptotic analysis of measure solutions \cite{MR2287895,MR2271485,MR2995659,Thieme_2022}. For example, the asymptotics of transport-type equations can be linked to ergodic properties of Markov processes \cite{MR2857021}. A connection between the two fields has been recently used to show  nonexpansiveness of a semigroup of solutions in a transport distance, corresponding to the flat metric \cite{MR4342006,MR4347325}.

From a modeling perspective, a unified theory of well-posedness for models defined on a broad class of state spaces enables the analysis of state space structure through inverse problems using mechanistic models of biological processes and associated data. Understanding the structure of the state space is currently one of the main challenges in single cell data analysis. To this end, model-based approaches can be a helpful tool, complementing the topology-based \cite{TDA_breast_cancer,TDA_cancer,ScData_TDA}, geometric \cite{PoincareMap} or statistical approaches currently used for data analysis.  A recent example of model-based analysis of cell state space allowed a comparison of a continuous and a graph-based state space structure \cite{ChoRockne2022}. However, the latter example lacks a uniform setting so that the two alternative models have to be studied separately. We show how to circumvent such a limitation by using models formulated in Radon measures on Polish metric spaces.  The proposed setting is sufficiently general so that it can simultaneously account for discrete and continuous distributions. The latter is desired not only in models of cell differentiation but also, for example, in modeling coagulation-fragmentation processes \cite{MR4330732,MR4126768}. 

\subsection{Structure of the paper}
In Section \ref{section:case Rd} we start with a short summary of the measure formulation on $\R^d$ to motivate the generalized notion of solution on Polish  metric spaces.  Section \ref{section:measure basics} introduces the basic measure theoretic concepts which are necessary for understanding the paper. The main results are presented in Section \ref{section:general case}. In particular, we first generalize the flow of vector fields to Polish metric spaces and then, formulate and analyze the models in the linear and nonlinear case. The model analysis is concluded by showing its equivalence to the  PDE models in the $\R^d$ case. To streamline the presentation, the proofs are shifted to Sections \ref{section:proof_linear} (linear model), \ref{section:proof_nonlinear} (nonlinear model) and \ref{section:proof_of_consistency} (consistency with the PDE model). The article is complemented by an overview on measure theory in Section \ref{appendix: measure theory}, which, although it does not contain any new results, enhances the presentation of the main article and ensures its self-containment.

\section{From a PDE formulation to a generalized model}
\label{section:case Rd}
We start with a PDE formulation of the structured population model, since it allows linking process-based model ingredients with an implicit formula for the model solution which, in turn, can be defined as the generalized model whose formulation does not use derivatives. Moreover, we use the $\R^d$ case to sketch the concept of analysis of structured population models in terms of Radon measures. 

We begin with clarifying the notation. By $\mu_{\bullet}$ we denote a family of measures $\{\mu_t \}_{t\in[0,T]}$ while $\mu_t$ represents the evaluation of $\mu$ at a given time point $t$. We consider the following nonlinear model defined on $[0,T]\times\mathbb{R}^d $
\begin{align}\label{gen_model_Rd}
\left\{\begin{array}{ll}
\partial_t \mu_t +\nabla_x\cdot (b(t,x,\mu_t) \mu_t)
 &=  c(t,x,\mu_t)\mu_t + \int_{\mathbb{R}^d} \eta(t,x,\mu_t)(y)  \diff\mu_t(y) + N(t,\mu_t)   ,
\\
\mu_0 &= \nu, \end{array}\right.
\end{align}
where $\nu\in \mathcal{M}^+(\mathbb{R}^d)$ and 
 \begin{align*}
 \begin{array}{rr}
     b: [0,T]\times\mathbb{R}^d \times \mathcal{M}^+(\mathbb{R}^d)  \to \mathbb{R}^d,\hfill & c:[0,T]\times\mathbb{R}^d \times \mathcal{M}^+(\mathbb{R}^d)  \to \mathbb{R},\hfill\\
     \eta:[0,T]\times \mathbb{R}^d \times \mathcal{M}^+(\mathbb{R}^d) \to \mathcal{M}^+(\mathbb{R}^d),\hspace{0.3cm}&N:[0,T]\times \mathcal{M}^+(\mathbb{R}^d) \to \mathcal{M}^+(\mathbb{R}^d).
 \end{array}
	\end{align*}
Function $c$ represents a \textbf{growth term}, $\eta$ can be interpreted as \textbf{spread of heterogeneity} (such as a \textbf{mutation kernel}), $N$ is a \textbf{state-independent influx} and $b$ denotes the \textbf{flow of the vector field} which is responsible for the transformation dynamic of the individual states. In this section, we skip the conditions on the model functions and instead refer to Assumption \ref{ass_special_caseRd}. Based on the weak formulation of the problem, we introduce measure solutions to model \eqref{gen_model_Rd}.
\begin{definition}\label{gen_model_Rd_def}
A family of measures $\mu_{\bullet}:=\{\mu_t\}_{t\in[0,T]}\subset \mathcal{M}^+(\mathbb{R}^d)$ is a \textbf{measure solution} to \eqref{gen_model_Rd} provided $t \mapsto \mu_t$ is narrowly continuous and for any test function $\varphi \in C^1([0,T]\times \mathbb{R}^d)\cap W^{1,\infty}([0,T]\times\mathbb{R}^d)$,
\begin{align}\label{Nonlin_weak_gen_ver_chap3_Rd}
&  \int_{\mathbb{R}^d} \varphi(T,x)  \diff \mu_T(x)
  -\int_{\mathbb{R}^d}  \varphi(0,x)  \diff \mu_0(x) =
  \int_0^T \int_{\mathbb{R}^d} \partial_t \varphi(t,x)  \diff \mu_t(x)  \diff t \nonumber \\
 & \phantom{ = }+  \dst \int_0^T \int_{\mathbb{R}^d} \big( \nabla_x  \varphi(t,x) \cdot b(t,x,\mu_t)  + \varphi(t,x) \, c(t,x,\mu_t) \big)   \diff\mu_t(x)  \diff t  \nonumber \\
 &\phantom{ = }+ \int_0^T \int_{\mathbb{R}^d} \left(\int_{\mathbb{R}^d} \varphi(t,y)  \diff\!\left[\eta(t,x,\mu_t) \right](y) \right)  \diff\mu_t(x)  \diff t \nonumber \\
 & \phantom{ = }+
 \int_0^T \int_{\mathbb{R}^d} \varphi(t,x)  \diff N(t, \mu_t)(x)  \diff t .
\end{align}
  \end{definition}
  
 \begin{remark}
 In view of Theorem \ref{thm: flat norm equivalent to narrow convergence}, narrow continuity of a map $\mu_{\bullet}:[0,T]\to \mathcal{M}^+(S)$ is equivalent to continuity with respect to the flat metric. Thus, the flat metric provides a suitable framework to investigate measure solutions.
 \end{remark}
 
\begin{theorem}[Well-posedness of the model on $\R^d$]
\label{thm:Rd_Main} 
Under Assumption \ref{ass_special_caseRd}, there exists a unique Lipschitz continuous solution $ \mu_{\bullet}:[0,T] \to ({\mathcal M^+}(\mathbb{R}^d), \rho_F)$ to model \eqref{gen_model_Rd} with initial measure $\mu_0 \in {\mathcal M^+}(\mathbb{R}^d)$. Moreover, the solution is continuous with respect to time, initial measure as well as model functions.
\end{theorem}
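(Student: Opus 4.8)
The plan is to treat the nonlinear equation as a fixed point of a solution operator attached to an associated family of linear problems, and to run a Banach fixed-point argument in the flat metric. First I would fix a measure curve $\nu_\bullet \in C([0,\tau];(\M^+(\R^d),\rho_F))$ and freeze the nonlinearity by replacing every occurrence of $\mu_t$ inside $b,c,\eta,N$ with $\nu_t$. This turns \eqref{gen_model_Rd} into a linear transport--growth equation with coefficients $b(t,x,\nu_t)$, $c(t,x,\nu_t)$, etc., whose well-posedness I would take from the linear theory developed in Section~\ref{section:proof_linear}. Denote by $\Phi$ the operator sending $\nu_\bullet$ to the unique solution $\mu_\bullet$ of this linear problem; a fixed point of $\Phi$ is exactly a measure solution of the nonlinear model. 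Since narrow continuity is equivalent to flat-metric continuity (Theorem~\ref{thm: flat norm equivalent to narrow convergence}), the natural ambient space is the complete metric space of flat-continuous curves, which I would restrict to a ball of bounded total mass so that the Lipschitz constants of the model functions act uniformly.

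The two quantitative ingredients I need are an a priori mass bound and a stability estimate for the linear solution. For the mass bound, testing the weak formulation \eqref{Nonlin_weak_gen_ver_chap3_Rd} against $\varphi \equiv 1$ kills the $\nabla\varphi\cdot b$ term and yields a differential inequality for $t\mapsto \mu_t(\R^d)$ controlled by the bounds on $c$, the total mass of $\eta$, and the influx $N$ from Assumption~\ref{ass_special_caseRd}; Gr\"onwall then gives a bound on $[0,T]$ depending only on the data, ensuring $\Phi$ maps the chosen ball into itself for $\tau$ not too large. The linear solution itself I would represent through the flow $X$ of the frozen field $b(\cdot,\cdot,\nu_\bullet)$: $\mu_t$ is the pushforward of $\mu_0$ under $X$ reweighted by the growth factor $\exp(\int_0^t c\,\diff s)$, plus a Duhamel contribution integrating the source terms $\eta$ and $N$ along the flow. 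This explicit structure is what makes the stability estimate computable in the flat metric.

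The heart of the argument is the contraction estimate. Given two frozen inputs $\nu_\bullet^1,\nu_\bullet^2$, I would compare $\Phi[\nu^1]_t$ and $\Phi[\nu^2]_t$ in $\rho_F$ by splitting the difference into three pieces: the discrepancy between the two flows, which by Lipschitz dependence of $b$ on its measure argument and a Gr\"onwall estimate is bounded by $\sup_{s\le t}\rho_F(\nu^1_s,\nu^2_s)$; the difference of the growth weights, controlled by the Lipschitz constant of $c$; and the difference of the source contributions, controlled by the Lipschitz constants of $\eta$ and $N$. The key measure-theoretic fact is that the flat distance between two pushforwards $X^1_\# m$ and $X^2_\# m$ is bounded by $\sup_x |X^1(x)-X^2(x)|$ times the mass of $m$, since admissible test functions are $1$-Lipschitz and bounded by $1$; combined with the mass bound this yields $\sup_{t\le\tau}\rho_F(\Phi[\nu^1]_t,\Phi[\nu^2]_t)\le C\tau\,\sup_{t\le\tau}\rho_F(\nu^1_t,\nu^2_t)$ with $C$ depending only on the data. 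Choosing $\tau$ with $C\tau<1$ makes $\Phi$ a contraction, so Banach's theorem gives a unique solution on $[0,\tau]$; since $C$ is uniform over the bounded-mass ball, I would iterate over consecutive intervals to reach $[0,T]$.

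Finally, continuous dependence on the initial measure and on the model functions follows from exactly the same stability computation, now also tracking the difference in $\mu_0$ (which propagates through the pushforward with constant governed by the growth factor) and the differences in $b,c,\eta,N$ themselves. The main obstacle I anticipate is the flow discrepancy in the contraction estimate: controlling the flat distance between pushforwards under two genuinely different, measure-dependent flows, since the flow interacts with the flat metric only through its uniform-in-space deviation, and one must propagate the measure-argument Lipschitz bounds through the Gr\"onwall estimate for the flow without losing the factor $\tau$ that drives the contraction. I note that this scheme uses only the flat metric and the flow structure, not the Euclidean inner product, so it transfers verbatim to a general Polish space, which is how the result connects to the framework of Section~\ref{section:general case}.
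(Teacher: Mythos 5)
Your overall scheme --- freeze the measure argument in $b,c,\eta,N$, solve the resulting linear problem via the pushforward/Duhamel representation, and close with a Banach fixed point in the flat metric --- is the same route the paper takes (the paper runs the contraction with the generalized Bielecki metric \eqref{L1Bielecki} on all of $[0,T]$ instead of your small-time-plus-iteration argument, but that is a cosmetic difference; your a priori mass bound and your pushforward estimate $\rho_F(X^1_{\#}m, X^2_{\#}m)\le \|X^1-X^2\|_{\infty}\,\|m\|_{TV}$ also appear there). However, there is a genuine gap in your uniqueness claim. Theorem \ref{thm:Rd_Main} asserts uniqueness of measure solutions in the sense of Definition \ref{gen_model_Rd_def}, i.e.\ of the weak formulation \eqref{Nonlin_weak_gen_ver_chap3_Rd}, whereas Banach's theorem only gives uniqueness of fixed points of your operator $\Phi$; the identification ``fixed point of $\Phi$ $\Longleftrightarrow$ measure solution'' that you assert is exactly what needs proof. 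The linear theory of Section \ref{section:proof_linear} is a well-posedness theory for the representation formula \eqref{LinearModelMetric}, not for the weak PDE formulation. So either $\Phi[\nu_{\bullet}]$ is defined as the representation-formula solution --- in which case a fixed point is a generalized solution, and you must still show (a) that it satisfies the weak formulation (this is Theorem \ref{reprimpliesmeasuresol}, which you use implicitly) and, more importantly, (b) that \emph{every} weak measure solution of the frozen linear problem coincides with the representation-formula solution --- or $\Phi$ is defined through the weak formulation, in which case its very well-definedness presupposes (b). Point (b), uniqueness of weak measure solutions of the linear problem, does not follow from any contraction estimate: a hypothetical second weak solution is not a priori in the range of your solution map, so Gr\"onwall in $\rho_F$ has nothing to act on.

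This missing step is precisely where the paper uses duality: one solves the backward adjoint problem \eqref{lin_dual_without_N} by the method of characteristics (plus a fixed point to handle the $\eta$-term), obtaining $\varphi_{\psi,t}\in C^1\cap W^{1,\infty}$, and then tests the weak formulation of an \emph{arbitrary} measure solution with $\varphi_{\psi,t}$; this forces the duality identity \eqref{lin_model_Rd_def_mu_t} (respectively \eqref{lin_model_Rd_def_mu_t_with_N} when $N\neq 0$), which determines $\int \psi \,\diff\mu_t$ for every $\psi$ and hence $\mu_t$ itself. Without this adjoint argument (or some substitute playing the same role), your proposal establishes existence, the continuity statements, and uniqueness within the class of curves produced by the representation formula, but not the full uniqueness assertion of the theorem. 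Note also that the adjoint step is the one ingredient that genuinely uses the differentiable structure of $\mathbb{R}^d$, so your closing remark that the argument transfers verbatim to Polish spaces applies only to the representation-formula part: on a general $(S,d)$ there is no weak formulation to compare against, which is exactly why the paper takes \eqref{LinearModelMetric} as the definition of solution there.
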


A complete proof of Theorem \ref{thm:Rd_Main} is so far only available in case $S=\R^+$, see Section 2 in Ref. \cite{our_book_ACPJ}. 
Here, we sketch briefly the main ideas of the proof for $\R^d$ as they will serve as a road map for the analysis of the generalized model later on.

First, we set $N=0$ and consider a linear version of \eqref{gen_model_Rd}. 
In this case the solution can be constructed by duality  theory and the method of characteristics \cite{DiPernaLions,PerthameBook}. In particular, for $t\in [0,T]$ and $\psi\in C^1(\mathbb{R}^d)\cap W^{1,\infty}(\mathbb{R}^d)$ the corresponding dual problem is of the form
 \begin{align}
\label{lin_dual_without_N}
    \left\{\begin{array}{lll}
    \partial_{\tau}   \varphi_{\psi, t}   +  b \cdot \nabla_x \,\varphi_{\psi, t} +  c \, \varphi_{\psi, t}
   +  \int_{\mathbb{R}^d} \varphi_{\psi, t}(\tau,y)  \,\mathrm{d}\!\left[\eta(\tau,x) \right](y)
& =  0 &\mbox{in} \;\;\;  [0,t] \times\mathbb{R}^d,\\
  \varphi_{\psi, t}(t,\cdot)
& =  \psi & \mbox{in} \;\;\; \mathbb{R}^d.
\end{array}\right.
    \end{align}
The method of characteristics together with Banach Fixed Point Theorem yields a unique solution $\varphi_{\psi,t}\in C^1([0,t]\times \mathbb{R}^d)\cap W^{1,\infty}([0,t]\times \mathbb{R}^d)$ with representation formula
\begin{align*}
    \begin{split}
    &\varphi_{\psi,t}(\tau,x)=\,\psi(X_b(t-\tau,x))e^{\int_{\tau}^tc(r,X_b(r-\tau,x))\diff r}\\
   &+\int_{\tau}^t\int_{\mathbb{R}^d}\varphi_{\psi,t}(s,y)\diff[\eta(s,X_b(s-\tau,x))](y)e^{\int_{\tau}^sc(r,X_b(r-\tau,x))\diff r}\diff s.
   \end{split}
    \end{align*}

\noindent Here $X_b(t,\tau,x)$ denotes the flow generated by the vector field $b$ and which is defined as the unique solution of the ODE
\begin{align*}
\partial_t X_b(t, \tau, x) = b(t,X_b(t, \tau, x)) \quad \quad X_b(\tau, \tau, x) = x. 
\end{align*}
Defining $\mu_t$ via
\begin{align}
    \label{lin_model_Rd_def_mu_t}
    \int_{\mathbb{R}^d}\psi(x)\diff \mu_t(x)=\int_{\mathbb{R}^d}\varphi_{\psi,t}(0,x)\diff \mu_0(x)
    \end{align}
it can be shown that $\mu_t$ satisfies a semigroup property and is actually the unique solution to the primal problem with $N=0$. In the case with $N$ it is not possible to go to the dual (i.e. adjoint) problem directly as the PDE is not linear in $\mu_t$ anymore. So instead of \eqref{lin_model_Rd_def_mu_t}, we define
    \begin{align}    \label{lin_model_Rd_def_mu_t_with_N}
    \int_{\mathbb{R}^d}\psi(x)\diff \mu_t(x)=\int_{\mathbb{R}^d}\varphi_{\psi,t}(0,x)\diff \mu_0(x)+\int_0^t \int_{\mathbb{R}^d}\varphi_{\psi,t}(\tau,x)\diff [N(\tau)](x)\diff \tau,
    \end{align}
where $\varphi_{\psi,t}$ is still the solution to the dual problem \eqref{lin_dual_without_N} with $N=0$. The implicit representation formula for $\varphi_{\psi,t}$ translates to an integral representation of $\mu_t$ 
\begin{align}
\label{implicit representation Rd}
	&\int_{\R^d} \psi(x)  \diff\mu_t(x) = \int_{\R^d} \psi(X_b(t,0,x)) e^{\int_0^t c(s,\, X_b(s,\,0,\,x))  \diff  s}  \diff\mu_0(x) \nonumber \\
	&+\int_0^t \int_{\R^d} \int_{\R^d} \psi(X_b(t,\tau,y)) e^{\int_{\tau}^t c(s,\, X_b(s,\,\tau,\,y)) \diff  s}  \diff[\eta(\tau,x)](y)  \diff\mu_{\tau}(x)  \diff\tau \nonumber \\
	&+\int_0^t  \int_{\R^d} \psi(X_b(t,\tau,x)) e^{\int_{\tau}^t c(s,\, X_b(s,\,\tau,\,x)) \diff  s}  \diff[N(\tau)](x)   \diff\tau.
\end{align}
 
\noindent With representation \eqref{implicit representation Rd}, the existence as well as the continuity result in the linear case follow. In order to construct solutions to the nonlinear problem where the model functions depend on the measure argument, we apply Banach Fixed Point Theorem to a suitable operator.

\begin{remark}
Observe that the implicit representation \eqref{implicit representation Rd} does not require the concept of derivatives and can thus be defined on any metric space, giving \eqref{implicit representation Rd} a major advantage over the direct transport equation \eqref{gen_model_Rd_def}.
\end{remark}

\section{Measure Theory Tools}
\label{section:measure basics}
In this section we shortly introduce the basic measure theoretic concepts which are necessary for this paper. As usual, continuous functions from a space $X$ to a space $Y$ are denoted by $C^0(X,Y)$. If $Y=\mathbb{R}$, we omit the second argument and simply write $C^0(X)$. We will consider a metric space $(S,d)$ which is assumed to be \textbf{Polish}, i.e. separable and complete. Throughout this paper, we  work with signed Borel measures on $(S,d)$ and we refer to Chapter 3.1 in Ref. \cite{Folland.1984} for basic definitions. In view of Hahn-Jordan Decomposition Theorem, any signed measure $\mu$ has a unique representation $\mu=\mu^+-\mu^-$, where $\mu^+, \mu^-$ are nonnegative measures. If both $\mu^+(S), \mu^-(S) < \infty$, $\mu$ is said to be \textbf{finite} and the \textbf{space of all finite Borel measures} on $(S,d)$ is denoted by $\mathcal{M}(S)$. The corresponding \textbf{cone of nonnegative measures} is defined as $\mathcal{M}^+(S)=\left\{\mu \in \mathcal{M}(S)\mid \mu \geq 0\right\}$, where the partial ordering "$\leq$" on $\mathcal{M}(S)$ is given setwise, i.e. $\mu\leq \nu$ iff $\mu(A)\leq \nu(A)$ for all $A\in \mathcal{B}(S)$.

There are several norms on $\mathcal{M}(S)$ such as the well-known total variation norm $\|\cdot\|_{TV}$
$$
\| \mu \|_{TV} = \mu^+(S) + \mu^-(S).
$$
Even though $(\mathcal{M}(S),\|\cdot\|_{TV})$ is a Banach space, the total variation norm is not suitable for the applications in this paper as the corresponding topology is too strong. In particular, measure solutions will not be continuous with respect to initial conditions (see Examples 2.1 \& 2.2 in Ref. \cite{our_book_ACPJ}). Instead, we use the weaker \textbf{flat norm} (or \textbf{dual bounded Lipschitz norm}, \textbf{Fortet-Mourier norm}) 
	\begin{align}
		\label{defeq:flatnorm}
		\|\mu\|_{BL^*}:= \sup\left\{\int_S  \psi \diff\mu \mid \psi \in BL(S), \|\psi\|_{BL} \leq 1\right\}
	\end{align} 
and the corresponding \textbf{(flat) metric} $\rho_F$. The space of test functions is given by the bounded Lipschitz functions
	\begin{align*}
		BL(S)=\{f\in C^{0}(S)\mid \|f\|_{BL}<\infty\},
	\end{align*}
with the  (semi-)norms 
\begin{align*}
	\|f\|_{BL}:=\max \left \{ \|f\|_{\infty},|f|_{\Lip} \right\}, \hspace{0.4cm}\|f\|_{\infty}=\underset{x\in S}{\sup}\,|f(x)|, 
	\hspace{0.4cm}|f|_{\Lip}=\underset { x\neq y}{\sup } \,\frac {|f(x)-f(y)|}{d(x,y)}.
	\end{align*}
If we consider a sequence of measures $(\mu_n)_{n\in \mathbb{N}}\subset \mathcal{M}^+(S)$ converging to a measure $\mu$ with respect to the flat metric, then a compatible notion of convergence is the so-called \textbf{narrow convergence} (see Theorem \ref{thm: flat norm equivalent to narrow convergence}) which denotes convergence in duality with all continuous and bounded functions, i.e.
		\begin{align*}
			\lim_{n\to \infty} \int_{S}  \psi(x) \diff\mu_n(x)= \int_{S}  \psi(x) \diff\mu(x) \qquad\forall \psi \in C_b(S).
		\end{align*}
Accordingly, a map $\mu_{\bullet}:[0,T]\to \mathcal{M}(S)$, $t\mapsto \mu_t$ is called \textbf{narrowly continuous}  if $\mu_{\bullet}$ is continuous with respect to narrow convergence, i.e.  
			\begin{align*}
				\lim_{s \to t} \int_{S} \! \psi(x) \diff\mu_s(x)= \int_{S} \! \psi(x) \diff\mu_t(x) \qquad \forall \psi \in C_b(S) \text{ and } \forall t\in [0,T].
			\end{align*}
			
\noindent Theorem \ref{thm:Rd_Main} implies that solutions to model \eqref{gen_model_Rd} form a Lipschitz semigroup. The generalization of this property is based on the \textbf{push-forward of a measure} $\mu\in \mathcal{M}^+(S)$ under a measurable map $T:(X,\mathcal{A})\to (Y, \mathcal{B})$ given by $T_{\#}\mu(B) := \mu(T^{-1}(B))$
and the \textbf{change of variable formula}
\begin{align}\label{change_form_pf_chap3}
\int_Y f  \diff(T_{\#}\mu) = \int_X f \circ T  \diff\mu
\end{align}
which holds if either integral is well-defined, see Section 3.6 in Ref. \cite{Bogachev1.2007}. By choosing $f$ in \eqref{change_form_pf_chap3} to be a characteristic function we see that the push-forward measure $T_{\#}\mu$ is uniquely defined on $(Y,\mathcal{B})$.

 \section{Model formulation on a Polish metric space}
 \label{section:general case}
Equipped with the motivation of Section \ref{section:case Rd} and the measure theory basics of Section~\ref{section:measure basics}, we turn to structured population models which are formulated on a Polish metric space $(S,d)$. As abstract metric spaces do not possess a linear structure, the concept of derivatives is unavailable and thus also the method of characteristics - used to construct the solution of the  transport process in \eqref{gen_model_Rd}. Inspired by the results of Section \ref{section:case Rd}, we choose an alternative approach and start with generalizing the flow of the vector field $b$.
 
\subsection{Two-parameter families of homeomorphisms}\label{2par_pf}
\begin{definition}\label{Flow} 
Let $(S,d)$ be a separable metric space. For every pair $(t,\tau)\in \mathbb{R}^2$ we define a \textbf{two-parameter family of bi-Lipschitz homeomorphisms} $X(t,\tau,\cdot): S \rightarrow S$ such that
\begin{itemize}
\item[(i)] the map $t \rightarrow X(t,\tau,\cdot)$ is uniformly continuous, i.e. 
$$
\sup_{\tau \in \mathbb{R}}\sup_{x \in S} d(X(t_1,\tau,x), X(t_2,\tau,x))\leq \omega_X(|t_1-t_2|)
$$ 
with a modulus of continuity $\omega_X$ ($\omega_X: [0,\infty] \to [0,\infty]$ is continuous in $0$ and satisfies $\omega_X(0) = 0$).
\item[(ii)] $\lim_{t\rightarrow \tau}X(t,\tau,\cdot)=\mathrm{Id}$ in $C^0(S)$, i.e.
$\lim_{t\rightarrow \tau} \sup_{x \in S} d(X(t,\tau,x), x) = 0,$
\item[(iii)] $X(t_2,\tau,\cdot)=X(t_2,t_1,\cdot) \circ X(t_1,\tau,\cdot)$ for every $t_1,t_2 \in \mathbb R$,
\item[(iv)] there exists a locally bounded function $L_X(\cdot)$ with
	\begin{align*}
		&d(X(t,\tau,x_1),X(t,\tau,x_2))\leq L_X(t-\tau) \, d(x_1,x_2)\qquad \text{ and }\\
		&d(X^{-1}(t,\tau,x_1), X^{-1}(t,\tau,x_2))\leq L_X(t-\tau) \, d(x_1,x_2).
	\end{align*}
\end{itemize}
We may also restrict possible values of time arguments $\tau$ and $t$ to some given interval $[0,T]$ so that $t, \tau \in [0,T]$.
\end{definition}

\begin{remark} 
Bi-Lipschitz homeomorphisms may be seen as generalized flows of vector fields on $(S,d)$. In the notation $X(t,\tau,x)$, $t$  and $\tau$ can be interpreted as a current time and as an initial time point, respectively, while $x$ represents initial position. This includes backward flows when $t < \tau$.  
\end{remark}

\subsection{Linear structured population model on a metric space}\label{linear_general_metric_section}
\label{subsection:linear_model}
With the methods of Section \ref{2par_pf}, we aim at a generalization of model \eqref{gen_model_Rd} for the state space $(S,d)$ being separable and complete. As a metric space the state space  
possibly lacks linear structure, so we can not formulate an explicit PDE model on $(S,d)$. Nevertheless, we can proceed analogously to Section \ref{section:case Rd} by introducing model functions in the linear case
\begin{align*}
 \begin{array}{rr}
     c:[0,T]\times\mathbb{R}^d \times \mathcal{M}^+(\mathbb{R}^d)  \to \mathbb{R},\hspace{0.3cm}& \eta: [0,T] \times S \to \mathcal M^+(S),\hfill\\
     N: [0,T] \to \mathcal M^+(S),\hfill &X: [0,T] \times [0,T] \times S \to S.
 \end{array}
	\end{align*}
The interpretation of $c,\eta$ and $N$ is similar to their counterparts in Section \ref{section:case Rd}, namely $c$ represents a \textbf{growth term}, $\eta$ is a \textbf{spread of heterogeneity} and $N$ is a \textbf{state-independent influx}. Function $X$ is the \textbf{generalization of the flow of the vector field $b$} in the sense of Definition \ref{Flow}. Furthermore, we assume:
\begin{assumption}[Model functions]\label{assumptions_general_model}
The model functions $c$, $\eta$, $N$ and $X$ satisfy
\begin{itemize}
\item[(i)] $c \in L^1((0,T); BL(S))$,
\item[(ii)]  $\eta \in  L^1\left((0,T); BL(S;\mathcal M^+(S))\right)$,
\item[(iii)]  $N \in  L^1\left((0,T); \mathcal M^+(S)\right)$,
\item[(iv)]  $X$ is a bi-Lipschitz homeomorphism on $(S,d)$.
\end{itemize}
\end{assumption}
To explain the technical assumptions on the model functions, we elaborate  assumption (ii) from a computational point of view. For fixed $t \in (0,T)$ and $x \in S$, $\eta(t,x)$ is a measure in $\mathcal{M}^+(S)$, whereas for fixed $t$ the map $x \mapsto \eta(t,x) \in \mathcal{M}^+(S)$ is bounded and Lipschitz continuous, i.e.  
$$
\| \eta(t,\cdot) \|_{BL(S;\mathcal{M}^+)} := \max\left\{\sup_{x \in S} \| \eta(t,x) \|_{BL^*}, |\eta(t,\cdot)|_{\Lip}\right\} < \infty, 
$$
where the Lipschitz constant is given by
$$
 |\eta(t,\cdot)|_{\Lip} = \sup_{ x_1 \neq x_2} \frac{\rho_F(\eta(t,x_1),\eta(t,x_2))}{d(x_1,x_2)}.
$$
Finally, $\| \eta(t,\cdot) \|_{BL(S;\mathcal{M}^+)}$ is required to be integrable in time, i.e.
$$
\| \eta \|_{L^1_T(BL(S;\mathcal{M}^+))} = 
\int_0^T \| \eta(t,\cdot) \|_{BL(S;\mathcal{M}^+)}  \diff t
$$
is finite and thus for a.e. $t \in (0,T)$ the expression $\| \eta(t,\cdot) \|_{BL(S;\mathcal{M}^+)}$ is also finite. 

If $S= \R^d$ we can directly use the vector field $b$ instead of generalizing it to $X$. In Assumption \ref{ass_special_caseRd} we  provide assumptions on $b$ in the nonlinear case.

Now we can introduce the linear structured population model on $(S,d)$. The notion of solution is motivated by the implicit integral representation \eqref{implicit representation Rd}.

\begin{definition} [Generalized Model]\label{solution_linear_general_defn}
We say that a family of measures $\mu_{\bullet}:=\{\mu_t\}_{t\in[0,T]} \subset \mathcal{M}^+(S)$ is a \textbf{(generalized) solution to the linear structured population model} on $(S,d)$ with initial measure $\mu_0 \in \mathcal{M}^+(S)$ and model functions $(c,\eta, N,X)$ satisfying Assumptions \ref{assumptions_general_model}, if $ t \mapsto \mu_t$ is narrowly continuous and $\mu_t$ satisfies
	\begin{align}\label{LinearModelMetric}
		&\mu_t=X(t,0,\cdot)_{\#}\left(\mu_0(\cdot) e^{\int_0^t c(s,\,X(s,\,0,\cdot)) \diff s}\right)  \nonumber \\
	 	&+\int_0^tX(t,\tau,\cdot)_{\#}\left(\int_S \left[\eta (\tau, y) (\cdot)\right] \diff \mu_{\tau} (y) e^{\int_{\tau}^t c(s,\,X(s,\,\tau,\cdot)) \diff s}\right) \diff \tau \nonumber\\
	 	&+ \int_0^tX(t,\tau,\cdot)_{\#}\left(N(\tau)(\cdot) e^{\int_{\tau}^t c(s,\,X(s,\,\tau,\cdot)) \diff s}\right) \diff \tau.
	 \end{align}
 \end{definition}

\noindent Both integrals in the second part of formula \eqref{LinearModelMetric} should be understood in the Bochner sense. 

\begin{proposition}[Rigorous definition of integrals]
\label{proposition:rigorous definition of integrals}
The inner and outer Bochner integrals appearing in  \eqref{LinearModelMetric} are well-defined measures in $\mathcal{M}^+(S)$. 
\end{proposition}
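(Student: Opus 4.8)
The plan is to treat both the inner integral $\int_S [\eta(\tau,y)(\cdot)]\diff\mu_\tau(y)$ and the two outer integrals $\int_0^t(\cdots)\diff\tau$ as instances of a single abstract statement: if $(\Omega,\mathcal A,\lambda)$ is a finite measure space and $f\colon\Omega\to\Mp$ is strongly measurable with $\int_\Omega\|f(\omega)\|_{BL^*}\diff\lambda(\omega)<\infty$, then the Bochner integral of $f$ exists and coincides with the element of $\Mp$ determined setwise by $A\mapsto\int_\Omega f(\omega)(A)\diff\lambda(\omega)$. The inner integral is the case $\Omega=S$, $\lambda=\mu_\tau$, $f(y)=\eta(\tau,y)$, while the outer integrals are the case $\Omega=[0,t]$ with $\lambda$ Lebesgue measure. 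A preliminary observation used throughout is that on the cone $\Mp$ the flat norm equals the total mass, $\|\mu\|_{BL^*}=\mu(S)$ (test against $\psi\equiv 1$, and bound $\int_S\psi\diff\mu\le\mu(S)$), so that flat-norm integrability reduces to integrability of the scalar map $\omega\mapsto f(\omega)(S)$.

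First I would verify strong measurability. Since $S$ is Polish, $(\Mp,\rho_F)$ is separable, so by Pettis' theorem strong measurability follows from scalar measurability of $\omega\mapsto\int_S\psi\diff f(\omega)$ for every $\psi\in BL(S)$. For the inner integral this holds because $y\mapsto\eta(\tau,y)$ is Lipschitz for $\rho_F$ by Assumption \ref{assumptions_general_model}(ii), hence continuous. For the outer integrals it follows by combining the continuity of the pushforward $\mu\mapsto X(t,\tau,\cdot)_{\#}\mu$ for $\rho_F$ (a Lipschitz map induces a Lipschitz pushforward on $(\Mp,\rho_F)$), the continuity of $\tau\mapsto X(t,\tau,\cdot)$ from Definition \ref{Flow}(i), the measurability in $\tau$ of the inner integral $\tau\mapsto\int_S\eta(\tau,y)\diff\mu_\tau(y)$ and of $\tau\mapsto N(\tau)$, and the boundedness of the exponential weight. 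Integrability then uses the preliminary observation: for the inner integral $\int_S\eta(\tau,y)(S)\diff\mu_\tau(y)\le\big(\sup_y\|\eta(\tau,y)\|_{BL^*}\big)\,\mu_\tau(S)<\infty$, while for the outer integrals the pushforward preserves total mass and the weight is bounded by $\exp\big(\int_0^T\|c(s,\cdot)\|_\infty\diff s\big)<\infty$, leaving an $L^1$ bound in $\tau$ from Assumptions \ref{assumptions_general_model}(ii)--(iii) together with $\sup_{t\in[0,T]}\mu_t(S)<\infty$, which follows from narrow continuity of $\mu_\bullet$.

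With measurability and integrability in hand, the Bochner integral exists in the completion $\mathbb E$ of $(\M(S),\|\cdot\|_{BL^*})$. The main obstacle is that $(\M(S),\|\cdot\|_{BL^*})$ is not complete, so a priori the integral is merely an abstract element of $\mathbb E$ rather than a genuine measure; this is a real difficulty, not a formality, since the natural alternative of integrating in the complete space $(\M(S),\|\cdot\|_{TV})$ fails (e.g. $y\mapsto\delta_y$ is not even $\|\cdot\|_{TV}$-separably valued). To overcome it I would introduce the candidate set function $\nu(A):=\int_\Omega f(\omega)(A)\diff\lambda(\omega)$ and show directly that $\nu\in\Mp$: nonnegativity is immediate, finiteness is the bound just established, and countable additivity follows from Tonelli's theorem applied to the nonnegative integrand. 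It then remains to identify $\nu$ with the Bochner integral $I$. Each $\psi\in BL(S)$ induces a bounded linear functional on $\mathbb E$, and these functionals separate the points of $\mathbb E$ since $\|\cdot\|_{BL^*}$ is precisely the dual norm for the pairing with $BL(S)$. As continuous linear functionals commute with Bochner integration, $\langle\psi,I\rangle=\int_\Omega\langle\psi,f(\omega)\rangle\diff\lambda(\omega)=\int_S\psi\diff\nu=\langle\psi,\nu\rangle$ for all such $\psi$, whence $I=\nu\in\Mp$. Applying this to the two instances above, and noting that the outer integrand is assembled from the already justified inner integral, the influx $N$, the bounded weight, and the pushforward, completes the proof.
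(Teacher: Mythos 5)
Your route differs from the paper's at the decisive step. The paper also works in the completion $E=\overline{\mathcal{M}(S)}^{\|\cdot\|_{BL^*}}$, establishes strong measurability via the coincidence of weak and strong measurability on separable spaces by pairing with $E^*=BL(S)$ (Theorem \ref{thm:Predual_bounded_Lipschitz}), and derives the same integrability bound \eqref{norm_bound_intB}; but to show that the resulting abstract element of $E$ is a genuine nonnegative measure, it never constructs the measure setwise. Instead it checks that the Bochner integral pairs nonnegatively with every nonnegative $\psi\in BL(S)$ (Bochner integrals commute with bounded linear functionals), i.e.\ that it lies in $E\cap BL(S)^*_+$, and then invokes the representation theorem for positive functionals, Theorem \ref{satz:respresentation of positive functionals}, to conclude membership in $\mathcal{M}^+(S)$. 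Your alternative --- defining $\nu(A)=\int_\Omega f(\omega)(A)\diff\lambda(\omega)$ directly and identifying it with the Bochner integral by duality --- is a more constructive substitute that additionally produces a setwise formula for the integral, and your unified treatment of the inner and outer integrals over an abstract finite measure space is a genuine economy. The identification step itself is sound, since $BL(S)$ separates points of $E\subset BL(S)^*$, and your observation that $\|\mu\|_{BL^*}=\mu(S)$ on the cone is correct.

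There is, however, a gap in the construction of $\nu$, exactly where you assert that nonnegativity is immediate and ``countable additivity follows from Tonelli.'' For $\nu(A)=\int_\Omega f(\omega)(A)\diff\lambda(\omega)$ to be defined at all --- and for Tonelli, and for the subsequent Fubini-type identity $\int_\Omega\int_S\psi\diff f(\omega)\diff\lambda(\omega)=\int_S\psi\diff\nu$ --- you need the scalar map $\omega\mapsto f(\omega)(A)$ to be $\lambda$-measurable for \emph{every} Borel set $A\subseteq S$. This does not follow from strong measurability of $f$ in the flat norm by any of the continuity statements you invoke: evaluation $\mu\mapsto\mu(A)$ on a fixed Borel set is not $\rho_F$-continuous (take $S=\mathbb{R}$, $A=\{0\}$, and $\delta_{1/n}\to\delta_0$ in $\rho_F$ while $\delta_{1/n}(A)=0\not\to 1$), so it cannot be obtained by composing with the Lipschitz or continuity properties of $\eta$, $X$, $N$. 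What rescues the argument is the standard but nontrivial fact that for Polish $S$ the Borel $\sigma$-algebra of the narrow topology on $\mathcal{M}^+(S)$ --- which $\rho_F$ metrizes by Theorem \ref{thm: flat norm equivalent to narrow convergence} --- coincides with the $\sigma$-algebra generated by the evaluations $\mu\mapsto\mu(A)$ (equivalently: $\mu\mapsto\mu(A)$ is upper respectively lower semicontinuous for $A$ closed respectively open, and one concludes by a monotone class argument). With that fact cited, $\omega\mapsto f(\omega)(A)$ is measurable as the composition of a Borel map with a strongly measurable one, and your proof closes. Note that the paper's detour through $BL(S)^*_+$ and Theorem \ref{satz:respresentation of positive functionals} exists precisely to avoid this issue: positivity is tested only against continuous functions, so setwise measurability is never needed.
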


\begin{remark}
Consider the conservative case when $c,\eta,N\equiv 0$, i.e. the initial measure $\mu_0$ is just moved according to the map $X(t,\cdot,0)$. In this case the solution $\mu_t$ satisfies $\mu_t=X(t,\cdot,0)_{\#}\mu_0$ which is exactly the solution to the optimal transport problem with transport map $X(t,\cdot,0)$.
So from an optimal transport perspective formula \eqref{LinearModelMetric} provides a substantial generalization to the simple optimal transport case.
\end{remark}

With a fixed point argument applied to a suitable chosen operator, we are then able to prove the following
\begin{theorem}[Well-posedness of the linear model]\label{existence_gener_lin_thm}
Under Assumption \ref{assumptions_general_model}, there exists a unique solution to the linear problem \eqref{LinearModelMetric} with initial measure $\mu_0\in \mathcal{M}^+(S)$.  Furthermore, if $\mu$, $\nu$ are solutions with initial measures $\mu_0$ and $\nu_0$ respectively, then there exists a constant $C$ depending on $X$ and $c$ such that
    \begin{align*}
    \rho_F(\mu_t, \nu_t) \leq C \rho_F(\mu_0, \nu_0) \, e^{C \int_0^t \| \eta(\tau,\cdot) \|_{BL(S;\mathcal{M}^+)}  \diff\tau},
    \end{align*}
i.e. the solutions are continuous with respect to initial conditions.\\
Similarly, if  $\mu^{\eta, N, X, c}$ and $\mu^{\tilde \eta, \tilde N, \tilde X, \tilde c}$ are the two solutions of \eqref{LinearModelMetric} with the same initial condition $ \mu_0 \in \mathcal{M}^+(S)$ but different sets of model functions $(\eta, N, X, c)$ and $(\tilde \eta, \tilde N, \tilde X, \tilde c)$, then
	\begin{align*}
	&\rho_F\left(\mu^{\eta, N, X, c}_t, \mu^{\tilde \eta, \tilde N, \tilde X, \tilde c}_t\right) \leq C_M \int_0^t \left[\sup_{y \in S} \rho_F\left(\eta(\tau,y),\tilde \eta(\tau, y)\right) + \rho_F\left(N(\tau),\tilde N (\tau) \right) \right] \diff\tau  \\ 
	&   +C_M\int_0^t\|c(\tau, \cdot)- 	\tilde c(\tau, \cdot) \|_{\infty}  \diff\tau +C_M \sup_{0 \leq \tau_1 \leq \tau_2 \leq t} \|X(\tau_2, \tau_1, \cdot) - \tilde X(\tau_2, \tau_1, \cdot)  \|_{\infty},
	\end{align*}
i.e. the solutions are continuous with respect to model functions.
\end{theorem}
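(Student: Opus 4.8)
The plan is to read the defining identity \eqref{LinearModelMetric} as a fixed-point equation $\mu_\bullet=\mathcal T[\mu_\bullet]$ on the complete metric space $\mathcal Y$ of narrowly continuous maps $[0,T]\to(\Mp,\rho_F)$, where $\mathcal T$ assigns to a family $\mu_\bullet$ the family whose value at time $t$ is the right-hand side of \eqref{LinearModelMetric}. By Proposition \ref{proposition:rigorous definition of integrals} every term on that right-hand side is a well-defined element of $\Mp$, and the only occurrence of the unknown is the middle Bochner term, which depends linearly on $\mu_\bullet$. Thus the scheme follows the $\R^d$ road map of Section \ref{section:case Rd}, with the method of characteristics replaced by the two-parameter family $X$ of Definition \ref{Flow} and the representation \eqref{implicit representation Rd} replaced by its push-forward form.

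Three elementary flat-norm estimates drive the argument. First, since $X(t,\tau,\cdot)$ is $L_X(t-\tau)$-Lipschitz, for $\|\psi\|_{BL}\le1$ one has $\|\psi\circ X(t,\tau,\cdot)\|_{BL}\le\max\{1,L_X(t-\tau)\}$, whence $\rho_F(X(t,\tau,\cdot)_\#\mu,X(t,\tau,\cdot)_\#\nu)\le\max\{1,L_X(t-\tau)\}\,\rho_F(\mu,\nu)$. Second, the exponential weight $\exp(\int_\tau^t c(s,X(s,\tau,\cdot))\diff s)$ is, by Assumption \ref{assumptions_general_model}(i), a bounded Lipschitz function bounded by $e^{\int_0^T\|c(s,\cdot)\|_\infty\diff s}$ with a controlled Lipschitz seminorm, so multiplication by it maps $\Mp$ into $\Mp$ and is flat-norm Lipschitz. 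Third, testing against $\psi$ turns $\mu\mapsto\int_S[\eta(\tau,y)(\cdot)]\diff\mu(y)$ into integration of the function $y\mapsto\int_S\psi\diff\eta(\tau,y)$, whose $BL$-norm is at most $\|\eta(\tau,\cdot)\|_{BL(S;\mathcal M^+)}$, giving the crucial bound $\rho_F(\int_S\eta(\tau,\cdot)\diff\mu,\int_S\eta(\tau,\cdot)\diff\nu)\le\|\eta(\tau,\cdot)\|_{BL(S;\mathcal M^+)}\,\rho_F(\mu,\nu)$. Combining these yields $\rho_F(\mathcal T[\mu]_t,\mathcal T[\nu]_t)\le K\int_0^t\|\eta(\tau,\cdot)\|_{BL(S;\mathcal M^+)}\rho_F(\mu_\tau,\nu_\tau)\diff\tau$ with $K$ depending only on $\sup_{0\le\tau\le t\le T}\max\{1,L_X(t-\tau)\}$ and on $e^{\int_0^T\|c\|_\infty}$. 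Iterating, $\rho_F(\mathcal T^n[\mu]_t,\mathcal T^n[\nu]_t)\le\frac{(K\|\eta\|_{L^1_T(BL(S;\mathcal M^+))})^n}{n!}\sup_{[0,T]}\rho_F(\mu_\bullet,\nu_\bullet)$, so some power of $\mathcal T$ is a contraction and the Banach Fixed Point Theorem gives a unique fixed point; equivalently one contracts in a single step using the weighted metric $\sup_t e^{-\Lambda\int_0^t\|\eta\|}\rho_F(\mu_t,\nu_t)$ for large $\Lambda$. A priori control of the total mass, obtained by testing \eqref{LinearModelMetric} with $\psi\equiv1$ (push-forwards preserve mass) and applying Grönwall's inequality, keeps all iterates in a fixed mass ball; narrow continuity of $t\mapsto\mu_t$ follows from properties (i)--(ii) of $X$ in Definition \ref{Flow} together with continuity in $t$ of the remaining factors.

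For continuity with respect to initial data I would subtract \eqref{LinearModelMetric} written for $\mu_\bullet$ and for $\nu_\bullet$: the influx terms cancel, the first terms are controlled by $C\,\rho_F(\mu_0,\nu_0)$ via the push-forward and exponential estimates, and the middle terms by $C\int_0^t\|\eta(\tau,\cdot)\|_{BL(S;\mathcal M^+)}\rho_F(\mu_\tau,\nu_\tau)\diff\tau$. Grönwall's inequality then produces exactly $\rho_F(\mu_t,\nu_t)\le C\,\rho_F(\mu_0,\nu_0)\,e^{C\int_0^t\|\eta(\tau,\cdot)\|_{BL(S;\mathcal M^+)}\diff\tau}$ with $C$ depending only on $X$ and $c$.

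For continuity with respect to the model functions I would again subtract the two representations and split the difference into one contribution per perturbed datum. The $c$-difference is handled by $|e^{a}-e^{b}|\le e^{\max\{a,b\}}|a-b|$ together with $\int_\tau^t|c-\tilde c|\le\int_0^t\|c(\tau,\cdot)-\tilde c(\tau,\cdot)\|_\infty\diff\tau$; the $\eta$- and $N$-differences are measured by $\sup_{y}\rho_F(\eta(\tau,y),\tilde\eta(\tau,y))$ and $\rho_F(N(\tau),\tilde N(\tau))$ after the push-forward and exponential estimates; and the $X$-difference is handled by the pointwise bound $\left|\int_S(\psi\circ X-\psi\circ\tilde X)\diff\mu\right|\le|\psi|_{\Lip}\sup_{0\le\tau_1\le\tau_2\le t}\|X(\tau_2,\tau_1,\cdot)-\tilde X(\tau_2,\tau_1,\cdot)\|_\infty\,\mu(S)$. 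The remaining terms reproduce $C_M\int_0^t\|\eta(\tau,\cdot)\|\,\rho_F(\mu_\tau^{\eta,N,X,c},\mu_\tau^{\tilde\eta,\tilde N,\tilde X,\tilde c})\diff\tau$, and a final application of Grönwall's inequality, with $C_M$ absorbing the uniform mass bound and the bounds on the model functions, yields the stated estimate. I expect the main obstacle to be precisely this last part: the careful bookkeeping needed to isolate each datum's contribution while keeping all constants expressed solely through the uniform a priori mass bound and the $X$- and $c$-norms, in particular propagating the bi-Lipschitz constant $L_X$ consistently through the nested push-forwards and time integrals.
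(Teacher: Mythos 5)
Your proposal is correct and follows essentially the same route as the paper: a fixed-point argument for the operator defined by the right-hand side of \eqref{LinearModelMetric} on the complete space $C^0([0,T];\mathcal{M}^+(S))$, driven by the same key bounds (the $BL$-norm control of $\psi(X(t,\tau,\cdot))e^{\int_\tau^t c}$, i.e.\ the paper's constant $C^{X,c}_{\tau,t}$, and the bound \eqref{boundedness_of_integral_over_eta} on $x\mapsto\int_S\psi\,\mathrm{d}[\eta(\tau,x)]$), followed by Gr\"onwall for both continuity statements and an a priori mass bound obtained by testing with $\psi\equiv 1$. The only cosmetic difference is that you lead with the iterated-power contraction via the $1/n!$ factor, whereas the paper contracts in one step using the generalized Bielecki metric $\rho_{\lambda,f}$ with $f(u)=\|\eta(u,\cdot)\|_{BL(S;\mathcal{M}^+)}$ — an alternative you yourself note as equivalent.
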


\subsection{The nonlinear model}\label{Chap33_nonlinprob}
\label{subsection:nonlinear_model}
Next, we turn to the nonlinear version of \eqref{LinearModelMetric} which involves the following model functions
\begin{align*}
    \begin{array}{rr}
    c: [0,T] \times S \times \mathcal M^+(S) \to \mathbb{R},\hfill&
    \eta: [0,T] \times S  \times \mathcal M^+(S) \to \mathcal M^+(S),\hfill\\
    N: [0,T] \times \mathcal M^+(S) \to \mathcal M^+(S),\hspace{0.2cm} &
    X: [0,T] \times [0,T] \times S \times \left([0,T] \to \mathcal M^+(S) \right) \to S.
    \end{array}
\end{align*}
\noindent Compared to the setting of the linear model, there is an additional measure-valued argument which allows the model functions to depend on the solution itself. 

At first glance it might not be clear why the function $X$ takes as an argument the whole map from $[0,T] \to \mathcal M^+(S)$ rather than an individual measure from $\mathcal M^+(S)$. Recall that the map $X$ is the generalization of the flow of a vector field on $\mathbb{R}^d$ given by nonlinear ODEs of the form
\begin{equation}\label{eq_nonlinear_ODE_example_motiv_X_arg}
\partial_t X_b(t,\tau,x) = b(t,X_b(t,\tau,x), \mu_t), \quad \quad \quad X_b(\tau,\tau,x) = x,
\end{equation}
where $b: [0,T] \times \mathbb{R}^d \times \mathcal M^+(\R^d) \to \mathbb{R}^d$ is a vector field and $\mu_{\bullet}:[0,T]\to \mathcal M^+(\R^d)$ is a measure-valued map. Integrating \eqref{eq_nonlinear_ODE_example_motiv_X_arg} from $\tau$ to $t$ yields
    \begin{align*}
    X_b(t,\tau,x) = x + \int_{\tau}^t b(s,X_b(s,\tau,x), \mu_s) \diff s,
    \end{align*}
so that $X_b(t,\tau,x)$ may depend on values of $\mu_s$ for time arguments $\tau \leq s \leq t$. 

\begin{assumption}[Nonlinear model]\label{assumptions_general_model_non}
The model functions $\eta, N$ and $c$ should satisfy the following conditions.
\begin{itemize}
\item[(i)] For any $\mu \in \mathcal{M}^+(S)$, the model functions $\eta(\cdot,\cdot,\mu), N(\cdot, \mu)$ and $c(\cdot,\cdot,\mu)$ satisfy Assumption \ref{assumptions_general_model}. Moreover, we have uniform bounds
\begin{equation}\label{cond_suff_to_est_unif_bounds}
\int_0^T \sup_{\mu \in \mathcal{M}^+(S)} \Big[\|c(\tau,\cdot,\mu) \|_{\infty} +  \sup_{y \in S} \|\eta(\tau,y,\mu) \|_{BL^*} +   \|N(\tau,\mu) \|_{BL^*}\Big]   \diff\tau < \infty,
\end{equation}
and for any $R > 0$ 
$$
\int_0^T \sup_{\| \mu \|_{BL^*} \leq R} \Big[|c(\tau,\cdot,\mu) |_{\Lip} +  \|\eta(\tau,\cdot,\mu) \|_{BL(S;\mathcal{M}^+)}\Big]   \diff\tau < \infty.
$$
\item[(ii)] For any narrowly continuous map $\mu_{\bullet}: [0,T] \to \mathcal{M}^+(S)$, $X(t,\tau,x,\mu_{\bullet})$ is a two-parameter family of  bi-Lipschitz homeomorphisms in $t$ and $\tau$. Moreover, for any $R$ and any $\mu_{\bullet}$ with $\sup_{t \in [0,T]} \| \mu_{t} \|_{BL^*} \leq R$, there exists a modulus of continuity $\omega_{X,R}:[0,\infty] \to [0,\infty]$ and a locally bounded function $L_{X,R}:\mathbb{R} \to \mathbb{R}$ satisfying
\begin{align}\label{conditions_for_nonlinear_flow}
&\sup_{\tau \in \mathbb{R}}\sup_{x \in S} d(X(t_1,\tau,x,\mu_{\bullet}), X(t_2,\tau,x,\mu_{\bullet}))\leq \omega_{X,R}(|t_1-t_2|), \nonumber\\
		&d(X(t,\tau,x_1,\mu_{\bullet}),X(t,\tau,x_2,\mu_{\bullet}))\leq L_{X,R}(t-\tau) \,d(x_1,x_2), \phantom{\sup_{\tau \in \mathbb{R}}} \nonumber \\
		&d(X^{-1}(t,\tau,x_1,\mu_{\bullet}), X^{-1}(t,\tau,x_2,\mu_{\bullet}))\leq L_{X,R}(t-\tau)\, d(x_1,x_2). \phantom{\sup_{\tau \in \mathbb{R}}} 
\end{align}
This means that the properties of a bi-Lipschitz homeomorphism are satisfied uniformly for each ball in the space of measures.
\item[(iii)] For any $R > 0$, there exists $L_{R,c} \in L^1(0,T)$ so that for $\|\mu\|_{BL^*}, \|\nu\|_{BL^*} \leq R$ 
$$
\|c(t,\cdot,\mu) - c(t,\cdot,\nu)\|_{\infty} \leq L_{R,c}(t) \rho_F(\mu, \nu).
$$ 
\item[(iv)] For any $R > 0$, there exists $L_{R,\eta}\in L^1(0,T)$ so that for $\|\mu\|_{BL^*}, \|\nu\|_{BL^*} \leq R$
$$
\sup_{y\in S} \rho_F\left(\eta(t,y,\mu), \eta(t,y,\nu) \right) \leq L_{R,\eta}(t) \, \rho_F(\mu, \nu).
$$  
\item[(v)] For any $R > 0$, there exists $L_{R,N}\in L^1(0,T)$ so that if $\|\mu\|_{BL^*}, \|\nu\|_{BL^*} \leq R$
$$
\sup_{y\in S} \rho_F\left(N(t,\mu), N(t,\nu) \right) \leq L_{R,N}(t)\, \rho_F(\mu, \nu).
$$  
\item[(vi)] For any $R > 0$, there exists $L_{R,X} \in L^1(0,T)$ so that for $\sup_{t\in [0,T]} \|\mu_t\|_{BL^*}$, $\sup_{t\in [0,T]} \|\nu_t\|_{BL^*} \leq R$ 
$$
\|X(t_2,t_1, \cdot,\mu_{\bullet}) -  X(t_2,t_1, \cdot,\nu_{\bullet})\|_{\infty} \leq \int_{t_1}^{t_2} L_{R,X}(\tau) \, \rho_F(\mu_{\tau}, \nu_{\tau})  \diff\tau.
$$  
\end{itemize}
\end{assumption}

Condition \eqref{cond_suff_to_est_unif_bounds} allows to bound solutions in the total variation norm so that all other bounds can be assumed to hold only locally. The following lemma shows that (ii) and (vi) in Assumption \ref{assumptions_general_model_non} indeed generalize the situation known for classical flows of vector fields.

\begin{definition}\label{def_soltononlinearmodel_gen} We say that $\mu_{\bullet}: [0,T] \to \mathcal{M}^+(S)$ is a \textbf{generalized solution to the  nonlinear structured population model on $(S,d)$} with initial measure $\mu_0 \in\mathcal{M}^+(S)$ and model functions $\eta, X, c, N$  satisfying Assumption \ref{assumptions_general_model_non}, if $\mu_{\bullet}$ is narrowly continuous and satisfies
	\begin{align}\label{NonlinearModelMetric}
	&\mu_t=X(t,0,\cdot,\mu_{\bullet})_{\#}\left(\mu_0(\cdot) e^{\int_0^t c(s,\, X(s,\,0,\cdot,\,\mu_{\bullet}),\, \mu_{s}) \diff s}\right)  \phantom{\int}\nonumber \\
	 &+\int_0^tX(t,\tau,\cdot, \mu_{\bullet})_{\#}\left(\int_S \left[\eta (\tau, y, \mu_{\tau}) (\cdot)\right]  \diff \mu_{\tau} (y) e^{\int_{\tau}^t c(s,\,X(s,\,\tau,\cdot,\, \mu_{\bullet}), \, \mu_{s}) \diff s}\right) \diff \tau \nonumber\\
	 	&+ \int_0^tX(t,\tau,\cdot, \mu_{\bullet})_{\#}\left(N(\tau, \mu_{\tau})(\cdot) e^{\int_{\tau}^t c(s,\,X(s,\,\tau,\cdot,\, \mu_{\bullet}),\, \mu_{s}) \diff  s}\right) \diff\tau.
	 \end{align}
 \end{definition}
To prove existence and uniqueness of solutions the nonlinear model is reduced to a linear version where the measure argument is fixed. We then conclude by a fixed point argument.
\begin{theorem}[Well-posedness of the nonlinear model]\label{uniq_theorem_nonlin_general}
Let $\mu_0 \in \mathcal{M}^+(S)$ be an initial measure and suppose  the model functions $(\eta,N, X,c)$ satisfy Assumption \ref{assumptions_general_model_non}. Then there exists a unique solution of the nonlinear structured population model on $(S,d)$ in the sense of Definition \ref{def_soltononlinearmodel_gen}. Moreover, the solution is continuous with respect to initial conditions and model functions.  More precisely, let $(\eta, N, X, c)$ and $(\tilde \eta, \tilde N, \tilde X, \tilde c)$ be two sets of model functions which both satisfy Assumption \ref{assumptions_general_model_non} and let $\mu^{\eta, N, X, c}_t$ and $\mu^{\tilde \eta, \tilde N, \tilde X, \tilde c}_t$ be the  solutions to the corresponding nonlinear structured population models. Then the following estimate holds
	\begin{align}\label{cont_mf_gen_nonlinear_chap3}
	&\rho_F\big(\mu^{\eta, N, X, c}_t,\, \mu^{\tilde \eta, \tilde N, \tilde X, \tilde c}_t\big)  \leq   C_M  \int_0^t \sup_{y \in S} \sup_{\nu \in \mathcal{M}^+(S)} \rho_F\left(\eta(\tau,y,\nu),\tilde 
	\eta(\tau, y,\nu)\right)  \diff\tau  \nonumber \\ 
	 &+\, C_M  \int_0^t \sup_{\nu \in \mathcal{M}^+(S)} \rho_F\left(N(\tau,\nu), 
	\tilde N(\tau, \nu)\right)  \diff\tau \nonumber \\
	&+\,C_M  \int_0^t \sup_{\nu \in \mathcal{M}^+(S)} \|c(\tau, \cdot, \nu)- \tilde c(\tau, \cdot,\nu) \|_{\infty} \diff\tau \nonumber \\
	&+\,C_M \sup_{\nu_{\bullet} \in C^0([0,t];\, \mathcal{M}^+(S))} \sup_{0 \leq \tau_1 \leq \tau_2 \leq t} \|X(\tau_2, \tau_1, \cdot, \nu_{\bullet}) - \tilde X(\tau_2, \tau_1, \cdot, 
	\nu_{\bullet})  \|_{\infty}.
	\end{align}
Similarly, if $\mu_t^{(1)}$ and $\mu_t^{(2)}$ solve the  nonlinear structured population model with identical model functions $(\eta, N, X, c)$  but different initial conditions $\mu_0^{(1)}$ and $\mu_0^{(2)}$, then there is a constant $C_M$ such that
\begin{equation}\label{cont_ic_gen_nonlinear_chap3}
\rho_F(\mu_t^{(1)}, \mu_t^{(2)}) \leq C_M \, \rho_F(\mu_0^{(1)},\mu_0^{(2)}) .
\end{equation}
\end{theorem}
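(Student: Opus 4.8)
The plan is to reduce the nonlinear problem to the linear one of Theorem \ref{existence_gener_lin_thm} by freezing the measure argument, and then to recover the nonlinear solution as a fixed point. Given a narrowly continuous map $\nu_{\bullet} \in C^0([0,T];\mathcal{M}^+(S))$, I would freeze the measure slot in every model function, setting $c_{\nu}(t,x) := c(t,x,\nu_t)$, $\eta_{\nu}(t,x) := \eta(t,x,\nu_t)$, $N_{\nu}(t) := N(t,\nu_t)$ and $X_{\nu}(t,\tau,x) := X(t,\tau,x,\nu_{\bullet})$. Assumption \ref{assumptions_general_model_non}(i)--(ii) guarantees that $(c_{\nu},\eta_{\nu},N_{\nu},X_{\nu})$ satisfies the linear Assumption \ref{assumptions_general_model}, so Theorem \ref{existence_gener_lin_thm} produces a unique linear solution, which I denote $\mathcal{T}(\nu_{\bullet})$. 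A fixed point of $\mathcal{T}$ is precisely a narrowly continuous map satisfying \eqref{NonlinearModelMetric}, i.e. a solution in the sense of Definition \ref{def_soltononlinearmodel_gen}. Before running the fixed point argument I would exploit the uniform bounds \eqref{cond_suff_to_est_unif_bounds}: since $X(t,\tau,\cdot)$ is a homeomorphism it preserves total mass, so evaluating \eqref{NonlinearModelMetric} on the whole space and applying Grönwall bounds the total mass $\mu_t(S) = \|\mu_t\|_{BL^*}$ of any solution by a constant $R$ depending only on $\|\mu_0\|_{BL^*}$ and the integrals in \eqref{cond_suff_to_est_unif_bounds}. This confines the iteration to the ball $\{\|\mu\|_{BL^*}\le R\}$, where the local Lipschitz estimates \ref{assumptions_general_model_non}(iii)--(vi) are available with fixed constants.

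The contraction estimate is the heart of the argument. Applying the continuity-in-model-functions bound of Theorem \ref{existence_gener_lin_thm} to $\mathcal{T}(\nu^1_{\bullet})$ and $\mathcal{T}(\nu^2_{\bullet})$ and then controlling each difference of frozen model functions through \ref{assumptions_general_model_non}(iii)--(vi) yields
\begin{equation*}
\rho_F\big(\mathcal{T}(\nu^1)_t,\, \mathcal{T}(\nu^2)_t\big) \le C_M \int_0^t L_R(\tau)\, \rho_F(\nu^1_{\tau}, \nu^2_{\tau}) \diff\tau,
\end{equation*}
with $L_R := L_{R,c}+L_{R,\eta}+L_{R,N}+L_{R,X} \in L^1(0,T)$; the crucial point is that assumption (vi) delivers the difference of flows in the causal integral form $\int_{t_1}^{t_2} L_{R,X}(\tau)\,\rho_F(\nu^1_\tau,\nu^2_\tau)\diff\tau$, so no anticipative dependence on $\nu_{\bullet}$ spoils the estimate. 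Equipping $C^0([0,T];\mathcal{M}^+(S))$ with the Bielecki-type weighted metric $\sup_{t} e^{-\lambda \int_0^t L_R}\,\rho_F(\nu^1_t,\nu^2_t)$ turns $\mathcal{T}$ into a strict contraction once $\lambda$ is large enough, and the Banach fixed point theorem furnishes a unique fixed point on all of $[0,T]$ at once (a short-time-plus-concatenation argument would work equally well). This proves existence and uniqueness.

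For the continuity statements I would argue by Grönwall, again routing everything through the linear theorem. For the dependence on initial data \eqref{cont_ic_gen_nonlinear_chap3}, I regard $\mu^{(1)}$ and $\mu^{(2)}$ as the linear solutions associated with their own frozen model functions; the linear continuity-in-initial-data estimate together with \ref{assumptions_general_model_non}(iii)--(vi) gives
\begin{equation*}
\rho_F(\mu^{(1)}_t, \mu^{(2)}_t) \le C\,\rho_F(\mu_0^{(1)}, \mu_0^{(2)}) + C\int_0^t L_R(\tau)\,\rho_F(\mu^{(1)}_{\tau}, \mu^{(2)}_{\tau})\diff\tau,
\end{equation*}
and Grönwall's lemma closes it. For the dependence on model functions \eqref{cont_mf_gen_nonlinear_chap3}, I would insert an intermediate comparison solution, solving the linear problem with model functions $(\eta,N,X,c)$ but frozen at the trajectory $\mu^{\tilde\eta,\tilde N,\tilde X,\tilde c}_{\bullet}$; the triangle inequality splits the error into a model-function part, controlled directly by the linear continuity-in-model-functions estimate (producing exactly the four supremum terms of \eqref{cont_mf_gen_nonlinear_chap3}), and a measure-argument part, controlled by the Lipschitz assumptions and absorbed by Grönwall.

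The main obstacle I anticipate is not any single estimate but the bookkeeping forced by the flow $X$ depending on the entire map $\nu_{\bullet}$ rather than on a single measure. One must verify that the frozen flow $X_{\nu}$ still satisfies the two-parameter homeomorphism axioms of Definition \ref{Flow} with constants uniform over the ball of radius $R$ (this is exactly what \eqref{conditions_for_nonlinear_flow} is designed to provide), and one must ensure that the causal integral form in (vi) is what makes the Bielecki/Grönwall machinery close; a pointwise-in-time Lipschitz bound on $X$ would not suffice. Checking that narrow continuity is preserved under $\mathcal{T}$ and that the a priori ball is genuinely invariant are the remaining points requiring care.
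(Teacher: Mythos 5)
Your proposal follows essentially the same route as the paper's proof: freezing the measure argument to reduce to the linear problem of Theorem \ref{existence_gener_lin_thm}, confining the iteration to a ball via the a priori mass bound (the paper's set $\mathcal{K}_C$ and Lemma \ref{bound_on_TV_norm_gen_model}), proving contraction in the generalized Bielecki metric \eqref{L1Bielecki} by combining the linear continuity-in-model-functions estimate with Assumption \ref{assumptions_general_model_non} (iii)--(vi), and deducing both continuity statements from the linear theory plus Gr\"{o}nwall. The details you supply for the continuity estimates (intermediate comparison solution and triangle inequality) are exactly the ones the paper leaves implicit when it says these ``follow directly'' from the linear results.
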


\subsection{Equivalence of the generalized model with the PDE Model in $\R^d$ case}\label{sect:gen_setting_in_Rd}
\label{subsection:consistency}
\label{subsection:generalized model is pde model}
We verify that in the Euclidean domain $S=\R^d$ the generalized solution and the measure solution to \eqref{gen_model_Rd} are equivalent. In this case, instead of bi-Lipschitz homeomorphisms, it is sufficient to consider the flow of the vector field $b$ defined by the unique solution of the ODE
\begin{align}\label{general_ODE_chap3}
\partial_t X_b(t, \tau, x, \mu_{\bullet}) = b(t,X_b(t, \tau, x, \mu_{\bullet}), \mu_t) \quad \quad X_b(\tau, \tau, x, \mu_{\bullet}) = x. 
\end{align}
In fact, it can be shown, that under the following assumptions, $X_b$ satisfies (ii) and (vi) in Assumption \ref{assumptions_general_model_non} and is thus an explicit example for a bi-Lipschitz homeomorphism.

\begin{assumption}[Special case $\R^d$]
\label{ass_special_caseRd}
We replace hypotheses (ii) and (vi) on $X$ in Assumption \ref{assumptions_general_model_non} by the following assumptions on $b: [0,T] \times \mathbb{R}^d \times \mathcal{M}^+(\mathbb{R}^d) \to \mathbb{R}^d$:
\begin{itemize}
\item[(i)] For any $\mu \in \mathcal{M}^+(\mathbb{R}^d)$, the function $b(\cdot,\cdot, \mu)$ is in $L^1((0,T); BL(\mathbb{R}^d;\mathbb{R}^d))$. Moreover, for any $R>0$
$$
\int_0^T \sup_{\| \mu \|_{BL^*} \leq R} \|b(\tau,\cdot,\mu) \|_{BL}  \diff\tau < \infty.
$$ 
\item[(ii)] For any $R > 0$, there exists $L_{R,b}$ in $L^1(0,T)$ so that if $\mu_{\bullet}$, $\nu_{\bullet}$ are narrowly continuous and $\|\mu\|_{BL^*}, \|\nu\|_{BL^*} \leq R$, then 
$$
\|b(t,\cdot,\mu) - b(t,\cdot,\nu)\|_{\infty} \leq L_{R,b}(t) \rho_F(\mu, \nu).
$$  
\end{itemize}
The model functions $c$, $\eta$ and $N$ are still assumed to satisfy Assumption \ref{assumptions_general_model_non}.
\end{assumption}
\begin{theorem}\label{reprimpliesmeasuresol}
Suppose Assumptions \ref{ass_special_caseRd} hold true. 
Let $\mu_0 \in \mathcal{M}^+(\mathbb{R}^d)$ and let $X_b(t, \tau, x, \mu_{\bullet})$ be defined as the unique solution of the ODE
\begin{equation*}\label{general_ODE_chap34_copied}
\partial_t X_b(t, \tau, x, \mu_{\bullet}) = b(t,X_b(t, \tau, x, \mu_{\bullet}), \mu_t) \quad \quad X_b(\tau, \tau, x, \mu_{\bullet}) = x. \end{equation*} 
Then the generalized solution given by representation formula \eqref{NonlinearModelMetric} provides a measure solution to \eqref{gen_model_Rd} in the sense of Definition \ref{gen_model_Rd_def} and is thus the unique measure solution to the problem.
\end{theorem}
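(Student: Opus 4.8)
The plan is to verify the weak formulation \eqref{Nonlin_weak_gen_ver_chap3_Rd} directly from the representation formula \eqref{NonlinearModelMetric}, using that in $\R^d$ the map $X_b$ is a genuine flow governed by the ODE \eqref{general_ODE_chap3}. Note first that any family produced by \eqref{NonlinearModelMetric} is narrowly continuous by Definition \ref{def_soltononlinearmodel_gen}, so the narrow-continuity requirement in Definition \ref{gen_model_Rd_def} is inherited and only the integral identity must be checked. The first step is to \emph{freeze the nonlinear coefficients} along the given solution $\mu_{\bullet}$: setting $\bar b(t,x) := b(t,x,\mu_t)$, $\bar c(t,x) := c(t,x,\mu_t)$, $\bar\eta(t,x) := \eta(t,x,\mu_t)$ and $\bar N(t) := N(t,\mu_t)$, the nonlinear representation \eqref{NonlinearModelMetric} collapses to the linear representation \eqref{LinearModelMetric} with these now $\mu$-independent data, and $X_b(\cdot,\cdot,\cdot,\mu_{\bullet})$ becomes the flow of $\bar b$. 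Since the weak formulation \eqref{Nonlin_weak_gen_ver_chap3_Rd} likewise only sees the coefficients evaluated at $\mu_t$, it suffices to prove that \eqref{LinearModelMetric} implies \eqref{Nonlin_weak_gen_ver_chap3_Rd} in this linear, frozen setting.

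Next I would pass from the push-forward form \eqref{LinearModelMetric} to the scalar form \eqref{implicit representation Rd}: testing \eqref{LinearModelMetric} against $\psi \in C^1(\R^d)\cap W^{1,\infty}(\R^d)$ and applying the change-of-variables formula \eqref{change_form_pf_chap3} to each push-forward turns $X_b(t,\tau,\cdot)_{\#}$ into precomposition by $X_b(t,\tau,\cdot)$ inside the integral, reproducing \eqref{implicit representation Rd} verbatim. The core of the argument is then a differentiation in time of \eqref{implicit representation Rd}. Because the flow satisfies $\partial_t X_b(t,\tau,x) = \bar b(t,X_b(t,\tau,x))$ and the growth exponential obeys $\frac{\diff}{\diff t}e^{\int_\tau^t \bar c(s,X_b(s,\tau,x))\diff s} = \bar c(t,X_b(t,\tau,x))\,e^{\int_\tau^t \bar c(s,X_b(s,\tau,x))\diff s}$, differentiating the three terms of \eqref{implicit representation Rd} and invoking the chain rule $\frac{\diff}{\diff t}\psi(X_b(t,\tau,x)) = \nabla\psi(X_b(t,\tau,x))\cdot \bar b(t,X_b(t,\tau,x))$ produces, after collecting terms and recognizing the diagonal contribution $\tau=t$ of the $\bar\eta$- and $\bar N$-integrals, the differential identity
\begin{align*}
\frac{\diff}{\diff t}\int_{\R^d}\psi\,\diff\mu_t = \int_{\R^d}\big(\nabla\psi\cdot \bar b + \psi\,\bar c\big)\,\diff\mu_t + \int_{\R^d}\Big(\int_{\R^d}\psi(y)\,\diff[\bar\eta(t,x)](y)\Big)\diff\mu_t(x) + \int_{\R^d}\psi\,\diff\bar N(t).
\end{align*}
A time-dependent test function $\varphi$ is then incorporated by the product rule $\frac{\diff}{\diff t}\int\varphi(t,\cdot)\diff\mu_t = \int\partial_t\varphi(t,\cdot)\diff\mu_t + [\tfrac{\diff}{\diff s}\int\varphi(t,\cdot)\diff\mu_s]_{s=t}$, where the second summand is evaluated by the displayed identity applied to the frozen function $\varphi(t,\cdot)$; integrating from $0$ to $T$ yields precisely \eqref{Nonlin_weak_gen_ver_chap3_Rd}.

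For uniqueness I would argue conversely. Given any measure solution $\mu_{\bullet}$ of \eqref{gen_model_Rd}, freeze its coefficients as above and insert the solution $\varphi_{\psi,t}$ of the dual problem \eqref{lin_dual_without_N} as a test function on $[0,t]$. The dual equation is engineered so that the transport, growth and $\bar\eta$-contributions on the right-hand side of \eqref{Nonlin_weak_gen_ver_chap3_Rd} cancel against $\partial_\tau\varphi_{\psi,t}$, leaving exactly the scalar representation \eqref{lin_model_Rd_def_mu_t_with_N}. Hence every measure solution is a generalized solution, and Theorem \ref{uniq_theorem_nonlin_general} forces it to coincide with the one built from \eqref{NonlinearModelMetric}.

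The main obstacle is rigor under the low time-regularity of the data: since $\bar b,\bar c,\bar\eta,\bar N$ are only $L^1$ in time, the flow $X_b$ is merely absolutely continuous in $t$, so all the time-derivatives above exist for a.e.\ $t$ only and must be justified through the Carathéodory (integral) form of the ODE, the Lebesgue differentiation theorem, and dominated convergence controlled by the uniform integrable bounds of Assumption \ref{assumptions_general_model_non}. A cleaner route that sidesteps pointwise differentiation is to expand the one-step evolution $\mu_t\to\mu_{t+h}$ furnished by the semigroup property (property (iii) of Definition \ref{Flow} together with the additivity $e^{\int_\tau^{t+h}} = e^{\int_\tau^t}e^{\int_t^{t+h}}$), divide by $h$ and pass to the limit; the delicate points are then the a.e.\ validity of these limits and the careful bookkeeping of the nonlocal $\bar\eta$-term, whose diagonal contribution is exactly what supplies the mutation integral in \eqref{Nonlin_weak_gen_ver_chap3_Rd}.
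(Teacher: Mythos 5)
Your proposal is correct, and its first half coincides with the paper's proof: the paper likewise passes to the scalar form (its identity \eqref{how_to_integrate}, which in $\R^d$ is \eqref{implicit representation Rd}), differentiates it a.e.\ in time using the ODE for the flow, the derivative of the growth exponential, and dominated convergence for the Bochner integrals, collects the transport, growth and diagonal ($\eta$- and $N$-) contributions into \eqref{formula_for_derivative_mang}, and finishes with the product rule. The only cosmetic difference there is that the paper first reduces, via Remark \ref{rem:approximation_of_compact_support} and a density theorem, to separated test functions $\varphi(t)\psi(x)$, whereas you handle a general $\varphi(t,x)$ through a diagonal chain-rule splitting; the paper's reduction is the safer way to make that step rigorous, but both work.

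Where you genuinely diverge is the uniqueness claim. The paper disposes of it in one line by citing Theorem \ref{thm:Rd_Main}, i.e.\ the well-posedness of the PDE model on $\R^d$ --- a result whose complete proof the paper itself admits is only available in the literature for $S=\R^+$. You instead run the duality argument: freeze the coefficients along an arbitrary measure solution, test with the dual solution of \eqref{lin_dual_without_N} to obtain \eqref{lin_model_Rd_def_mu_t_with_N}, conclude that every measure solution is a generalized solution, and then invoke the paper's own, fully proved Theorem \ref{uniq_theorem_nonlin_general}. This is more self-contained within the present framework (and, in fact, parallels an argument the authors drafted but removed from the source). The price you pay is twofold, and you should flag it: (i) the dual solution $\varphi_{\psi,t}$ must be an admissible test function, i.e.\ in $C^1\cap W^{1,\infty}$ jointly in $(\tau,x)$, which fails when $b,c,\eta$ are merely $L^1$ in time --- one needs either strengthened time-regularity or a mollification-in-time argument; and (ii) the weak formulation \eqref{Nonlin_weak_gen_ver_chap3_Rd} is posed on $[0,T]$, so testing on $[0,t]$ requires a localization step (e.g.\ ramp-type cut-offs in time). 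Neither point is fatal --- they are of the same nature as what the paper's Section 2 sketch glosses over --- but they are where the real work in your uniqueness route lies.
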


\section{Proof of the linear model}
\label{section:proof_linear}
We begin with a proof of Proposition \ref{proposition:rigorous definition of integrals} which guarantees the well-posedness of the Bochner integrals involved in the solution formula \eqref{LinearModelMetric}.

\begin{proofof}{Proposition \ref{proposition:rigorous definition of integrals}}
For the integrals appearing in \eqref{LinearModelMetric} to be well-posed, we require the target space to be a separable and complete vector space. Therefore, we will work in the Banach space
    \begin{align*}
        E := \overline{\mathcal{M}(S)}^{\|\cdot\|_{BL(S)^*}},
    \end{align*}
i.e. the closure of $\mathcal{M}(S)$ as a subspace of $BL(S)^*$. As closure of a separable space (cf. Theorem \ref{thm:M(S)_separablel}) $E$ is separable and complete as a closed subset of $BL(S)^*$. Thus, according to Section V  \S 4 in Ref. \cite{Yosida}, the concepts of weak and strong measurability coincide on $E$. Its dual space has been characterized in Theorem \ref{thm:Predual_bounded_Lipschitz} to be $E^*=BL(S)$.  Hence, to check strong measurability of the map 
$S \ni x \mapsto \eta(t,x) \in E$,
 we shall verify that for any $f \in E^*$ and a.e. $t \in (0,T)$, the map  
 $$ S \ni x \mapsto \langle f,\eta(t,x)\rangle_{E^*,E}
 $$
   is measurable where $\langle\cdot, \cdot\rangle_{E^*, E}$ denotes the usual dual pairing. However, when $\eta$ is assumed to be in $BL(S; \mathcal M^+(S))$, one can even prove its Lipschitz continuity as for any $x_1, x_2 \in S$
	\begin{align*}
		\big|\big\langle f,\eta(t,x_1) - \eta(t,x_2)\big\rangle_{E^*,E}\big| \leq& \,\|f\|_{E^*} \, \rho_F\left(\eta(t,x_1), \eta(t,x_2)\right)   
		 \\ \leq \, \|f\|_{E^*} \, |\eta(t,\cdot)|_{\Lip}\, d(x_1,x_2). 
	\end{align*}
We conclude that $S \ni x \mapsto \eta(t,x)$ is strongly measurable. Furthermore, Theorem~1 in Section V \S 5 in Ref. \cite{Yosida} implies that for a.e. $t \in (0,T)$ the map $x \mapsto \eta(t,x)$ is $\mu_{t}$-integrable since
\begin{equation}\label{norm_bound_intB}
\int_S \|  \eta(t,x) \|_{BL^*}  \diff\mu_{t}(x)  \leq \| \eta(t,\cdot) \|_{BL(S;\mathcal{M}^+)} \|\mu_{t}\|_{BL^*} < \infty
\end{equation}
and narrow continuity of $[0,T]\ni \tau \mapsto \mu_{\tau}$ yields that $\sup_{\tau \in [0,T]}\|\mu_{\tau}\|_{BL^*} < \infty$. Thus for a.e. $\tau \in (0,T)$, $I(\tau) = \int_S \left[\eta (\tau, y) (\cdot)\right]  \diff \mu_{\tau} (y)$ is a well defined Bochner integral on $E$. The computation in \eqref{norm_bound_intB} also implies that
$$
\left\|I(t) \right\|_{BL^*} = \left\|\int_S \eta(t,x)  \diff\mu_{t}(x) \right\|_{BL^*} \leq \| \eta(t,\cdot) \|_{BL(S;\mathcal{M}^+)} \|\mu_{t}\|_{BL^*}.
$$
Up to now we have only proven that $I(\tau) \in E$. Using Theorem \ref{satz:respresentation of positive functionals}, we show that it is actually a nonnegative measure. First, we claim that $I(\tau)$ is an element of $BL(S)^*_+$ (see Definition \ref{def:BL^*_+}). Indeed, let $f \in BL(S)$ with $f \geq 0$. As Bochner integrals commute with bounded, linear operators (see \cite[V \S 5, Corollary 2]{Yosida}), we compute for any $\nu \in \mathcal{M}^+(S)$
    \begin{align*}
    \left\langle \int_{S} \eta(t,x)(\cdot)  \diff\nu(x), f \right\rangle_{BL^*,\, BL} = 
    \int_{S} \left\langle \eta(t,x)(\cdot),f \right\rangle_{BL^*,\, BL}  \diff\nu(y) 
    \geq 0,
    \end{align*}
since $\eta(t,x) \in \mathcal{M}^+(S)$. With Theorem \ref{satz:respresentation of positive functionals} we obtain  $I(\tau) \in \mathcal{M}^+(S)$.  Next we focus on the outer integral
$$
J = \int_0^t X(t,\tau,\cdot)_{\#}\left[I(\tau) (\cdot)  e^{\int_{\tau}^t c(s,\,X(s,\tau,\cdot))  \diff s}\right]  \diff \tau.
$$
One can check that $\tau \mapsto X(t,\tau,\cdot)_{\#}\left[I(\tau) (\cdot)  e^{\int_{\tau}^t c(s,\,X(s,\,\tau,\cdot))  \diff s}\right]$ is weakly measurable. Furthermore, the map
$$
\tau \mapsto \left\langle f, X(t,\tau,\cdot)_{\#}\left[I(\tau) (\cdot)  e^{\int_{\tau}^t c(s,\,X(s,\,\tau,\cdot))  \diff s}\right] \right \rangle_{E^*, E}
$$ is continuous and a similar argument as for $I(\tau)$ shows that $J$ is also a nonnegative Radon measure on $S$. The second integral in \eqref{LinearModelMetric} can be analyzed analogously.
\end{proofof}

Before we look at well-posedness theory for solutions in Definition \ref{solution_linear_general_defn}, we want to understand how we can compute integrals of the form $\int_S \psi(x)  \diff\mu_t(x)$ for measures $\mu_t$ given by the implicit equation \eqref{LinearModelMetric}. 
 To this end, we decouple the left and the right hand side of  \eqref{LinearModelMetric}.

\begin{lemma}
Consider a narrowly continuous map $\nu_{\bullet}:[0,T] \to \mathcal{M}^+(S)$ and let $\mu_0 \in \mathcal{M}^+(S)$. For $t\in[0,T]$ we define a family of measures $\mu_t$ by
	\begin{align*}
		&\mu_t:=X(t,0,\cdot)_{\#}\left(\mu_0(\cdot) e^{\int_0^t c(s,\,X(s,\,0,\cdot)) \diff s}\right)  \nonumber \\
		 &+\int_0^tX(t,\tau,\cdot)_{\#}\left(\int_S \left[\eta (\tau, y) (\cdot)\right]  \diff \nu_{\tau} (y) e^{\int_{\tau}^t c(s,\,X(s,\,\tau,\cdot)) \diff s}\right) \diff \tau \\
		 &+ \int_0^tX(t,\tau,\cdot)_{\#}\left(N(\tau)(\cdot) e^{\int_{\tau}^t c(s,\,X(s,\,\tau,\cdot)) \diff s}\right) \diff \tau \nonumber.
	\end{align*}
Then for any $\psi \in BL(S)$ the integral with respect to $\mu_t$ can be computed as follows
	\begin{align}\label{how_to_integrate}
	&\int_S \psi(x)  \diff\mu_t(x) = \int_S \psi(X(t,0,x)) e^{\int_0^t c(s,\, X(s,\,0,\,x))  \diff  s}  \diff\mu_0(x) \nonumber  \\
	&+\int_0^t \int_S \int_S \psi(X(t,\tau,y)) e^{\int_{\tau}^t c(s,\, X(s,\,\tau,\,y)) \diff  s}  \diff[\eta(\tau,x)](y)  \diff\nu_{\tau}(x)  \diff\tau \nonumber \\
	&+\int_0^t  \int_S \psi(X(t,\tau,x)) e^{\int_{\tau}^t c(s,\, X(s,\,\tau,\,x)) \diff  s}  \diff[N(\tau)](x)   \diff\tau.
	\end{align}
\end{lemma}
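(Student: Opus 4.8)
The plan is to integrate a generic test function $\psi \in BL(S)$ against each of the three summands defining $\mu_t$ separately, exploiting only two elementary facts: the change of variables formula \eqref{change_form_pf_chap3} for push-forwards, and the commutation of a Bochner integral with a bounded linear functional (\cite[V \S 5, Corollary 2]{Yosida}). Since $E^* = BL(S)$ by Theorem \ref{thm:Predual_bounded_Lipschitz}, every $\psi \in BL(S)$ acts as a bounded linear functional on the Banach space $E$ in which the integrals of Proposition \ref{proposition:rigorous definition of integrals} live, which is precisely what makes both facts available.

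For the first summand $X(t,0,\cdot)_{\#}\big(\mu_0(\cdot)\,e^{\int_0^t c(s,X(s,0,\cdot))\diff s}\big)$ there is nothing to commute: writing $w_0(x) = e^{\int_0^t c(s,X(s,0,x))\diff s}$ for the bounded weight, formula \eqref{change_form_pf_chap3} applied with $T = X(t,0,\cdot)$, $f = \psi$ and base measure $w_0\,\mu_0$ yields directly $\int_S \psi(X(t,0,x))\,w_0(x)\diff\mu_0(x)$, the first line of \eqref{how_to_integrate}.

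For the third (influx) summand I would first pull $\psi$ through the outer Bochner integral in $\tau$, using that $\psi \in E^*$, and then apply \eqref{change_form_pf_chap3} pointwise in $\tau$; this produces $\int_0^t \int_S \psi(X(t,\tau,x))\,e^{\int_\tau^t c(s,X(s,\tau,x))\diff s}\diff[N(\tau)](x)\diff\tau$, the last line of \eqref{how_to_integrate}. The second (heterogeneity) summand is handled the same way, except that after the change of variables one is left with an inner integral against the Bochner integral $I(\tau) = \int_S [\eta(\tau,y)(\cdot)]\diff\nu_\tau(y)$ from Proposition \ref{proposition:rigorous definition of integrals}. Here I would commute a \emph{second} time: the integrand $h_\tau(z) := \psi(X(t,\tau,z))\,e^{\int_\tau^t c(s,X(s,\tau,z))\diff s}$ is itself an element of $BL(S) = E^*$ — it is bounded and Lipschitz because $X(t,\tau,\cdot)$ is bi-Lipschitz (Definition \ref{Flow}(iv)) and the exponential factor is bounded and Lipschitz in $z$ thanks to $c \in L^1((0,T);BL(S))$ — so pairing $h_\tau$ with $I(\tau)$ and invoking the commutation once more gives $\int_S \int_S h_\tau(z)\diff[\eta(\tau,y)](z)\diff\nu_\tau(y)$, which is exactly the middle line of \eqref{how_to_integrate}.

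The only genuinely delicate point, and the step I expect to require the most care, is this second commutation against the inner Bochner integral: one must verify that $h_\tau$ really qualifies as a test function in $E^* = BL(S)$ (not merely continuous and bounded) so that \cite[V \S 5, Corollary 2]{Yosida} applies, and that all the interchanges are legitimate under the integrability bounds already established in Proposition \ref{proposition:rigorous definition of integrals} — in particular \eqref{norm_bound_intB} together with the finiteness of $\sup_{\tau \in [0,T]}\|\nu_\tau\|_{BL^*}$ coming from narrow continuity. Once these are in place, the three contributions add up to \eqref{how_to_integrate} by linearity of the integral.
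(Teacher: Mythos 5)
Your proposal is correct and follows essentially the same route as the paper's proof: change of variables \eqref{change_form_pf_chap3} for the push-forwards, plus commutation of the Bochner integrals with elements of $E^* = BL(S)$ via Theorem \ref{thm:Predual_bounded_Lipschitz}, applied twice (outer and inner) for the heterogeneity term. The point you single out as delicate — that $h_\tau(z) = \psi(X(t,\tau,z))\,e^{\int_\tau^t c(s,X(s,\tau,z))\diff s}$ genuinely lies in $BL(S)$ — is precisely what the paper isolates as Lemma \ref{easier_notation}(ii) with the explicit bound \eqref{constant_C_X,c}, so your verification via the bi-Lipschitz property of $X$ and $c \in L^1((0,T);BL(S))$ is exactly the intended one.
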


\begin{proof}
We look at each term of $\mu_t$ separately to compute $\int_S \psi(x)  \diff\mu_t(x)$. For the first term we apply the change of variable formula \eqref{change_form_pf_chap3} for push-forward measures which leads to the first term on the right-hand side of \eqref{how_to_integrate}. Before we consider the second term we note that $\psi \in BL(S)$ implies $\psi \in E^*$ by Theorem \ref{thm:Predual_bounded_Lipschitz}. Thus, we can use that linear operations commute with the Bochner integral \cite{Yosida} which leads to
	\begin{align*}
		&\int_S \psi(y)  \diff\!\left[\int_0^tX(t,\tau,\cdot)_{\#}\left(\int_S \left[\eta (\tau, x) (\cdot)\right] \diff \nu_{\tau} (x) e^{\int_{\tau}^t c(s,\,X(s,\,\tau,\cdot)) \diff s}\right) \diff \tau 
		\right](y)  \\ 
	&	=\int_0^t \int_S \psi(y)  \diff\!\left[X(t,\tau,\cdot)_{\#}\left(\int_S \left[\eta (\tau, x) (\cdot)\right]  \diff \nu_{\tau} (x) e^{\int_{\tau}^t c(s,\,X(s,\,\tau,\cdot)) \diff s}\right) \right]
		(y)  \diff	\tau  \\ 
		&  =\int_0^t \int_S \psi(X(t,\tau,y))  \diff\!\left[\int_S \left[\eta (\tau, x) (\cdot)\right]  \diff \nu_{\tau} (x) e^{\int_{\tau}^t c(s,\,X(s,\,\tau,\cdot)) \diff  s}\right](y)  \diff \tau  \\ 
		& =\int_0^t \int_S \int_S \psi(X(t,\tau,y)) e^{\int_{\tau}^t c(s,\,X(s,\,\tau,\, y)) \diff s}  \diff\!\left[\eta (\tau, x)\right] (y)  \diff \nu_{\tau} (x)  \diff\tau. 
\end{align*} 
The third term follows similarly. 
\end{proof}

To streamline the following computations, we collect some regularity and boundedness results from Lemma 3.17 in Ref. \cite{our_book_ACPJ}.
\begin{lemma}\label{easier_notation}
Suppose functions $X$ and $c$ satisfy Assumption \ref{assumptions_general_model}. For all test functions $\psi$ with $\|\psi\|_{BL}\leq 1$.
    \begin{enumerate}
        \item [(i)] The map $[\tau,T] \ni t\mapsto G^{\psi, X,c}_{\tau,t}(x):=\psi(X(t,\tau,x))e^{\int_{\tau}^tc(s,X(s,\tau,x))\diff s}$ is uniformly continuous, i.e. there exists a modulus of continuity $\omega_{X,c}$ such that
        	\begin{align*}
	    	\left| G^{\psi, X,c}_{\tau,t_2}(x)-G^{\psi, X,c}_{\tau,t_1}(x)\right| \leq \omega_{X,c} \left(|t_2 - t_1| \right).
	    \end{align*}
	    \item [(ii)] For any $0\leq \tau \leq t \leq T$, the map
        $S \ni x\mapsto G^{\psi, X,c}_{\tau,t}(x)$ is in BL(S) and $\|G^{\psi, X,c}_{\tau,t}\|_{BL}$  can be bounded independently of $\psi$  by 
        	\begin{align}\label{constant_C_X,c}
        	\begin{split}
        	 C^{X,c}_{\tau,t} := \, e^{\int_{0}^{T} \|c(s, \cdot)\|_{\infty} \diff  s} \left[L_X(t-\tau) + \int_{\tau}^{t} |c(s, \cdot)|_{\Lip}\, L_X(s - \tau)  \diff  s  \right]
	        \end{split}, 
    	\end{align}
    	where he function $L_X$ has been introduced in Definition \ref{Flow}.
    \end{enumerate}
\end{lemma}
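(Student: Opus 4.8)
The plan is to prove both parts by the same elementary decomposition: split each difference of products into a difference in the test-function factor and a difference in the exponential factor, then estimate the two pieces using the bi-Lipschitz bounds on $X$ from Definition \ref{Flow} together with the integrability of $c$. Throughout I would write $E_{\tau,t}(x) := e^{\int_\tau^t c(s, X(s,\tau,x)) \diff s}$ and set $M_c := \int_0^T \|c(s,\cdot)\|_\infty \diff s$, which is finite since $c \in L^1((0,T);BL(S))$; note that $|\int_\tau^t c(s,X(s,\tau,x))\diff s| \le M_c$, so $E_{\tau,t}(x) \le e^{M_c}$ uniformly in $\tau,t,x$. The one inequality I would invoke repeatedly for the exponential factor is the mean-value bound $|e^a - e^b| \le e^{\max(|a|,|b|)}|a-b|$, which here gives $e^{M_c}|a-b|$.

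For part (i), fixing $x$ and $\tau$ and taking $t_1 \le t_2$, I would decompose
\begin{align*}
G^{\psi,X,c}_{\tau,t_2}(x) - G^{\psi,X,c}_{\tau,t_1}(x)
&= \big[\psi(X(t_2,\tau,x)) - \psi(X(t_1,\tau,x))\big] E_{\tau,t_2}(x) \\
&\quad + \psi(X(t_1,\tau,x)) \big[ E_{\tau,t_2}(x) - E_{\tau,t_1}(x)\big].
\end{align*}
The first bracket I bound via $|\psi|_{\Lip} \le 1$ and Definition \ref{Flow}(i) by $\omega_X(|t_2-t_1|)$, times $E_{\tau,t_2}(x)\le e^{M_c}$. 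For the second bracket I use $\|\psi\|_\infty\le 1$ and the exponential inequality to reduce to $|\int_{t_1}^{t_2} c(s,X(s,\tau,x))\diff s| \le \int_{t_1}^{t_2}\|c(s,\cdot)\|_\infty \diff s$. The point I would emphasize here is that since $s\mapsto\|c(s,\cdot)\|_\infty$ lies in $L^1(0,T)$, absolute continuity of the Lebesgue integral yields a modulus $\omega_c$ with $\int_{t_1}^{t_2}\|c(s,\cdot)\|_\infty\diff s \le \omega_c(|t_2-t_1|)$. Setting $\omega_{X,c}(r) := e^{M_c}(\omega_X(r)+\omega_c(r))$ then furnishes a modulus of continuity independent of $x$, $\psi$ and $\tau$.

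For part (ii), the sup-norm estimate is immediate, $\|G^{\psi,X,c}_{\tau,t}\|_\infty \le \|\psi\|_\infty e^{M_c} \le e^{M_c}$. For the Lipschitz seminorm I would decompose in the spatial variable, for $x_1\neq x_2$,
\begin{align*}
G^{\psi,X,c}_{\tau,t}(x_1) - G^{\psi,X,c}_{\tau,t}(x_2)
&= \big[\psi(X(t,\tau,x_1)) - \psi(X(t,\tau,x_2))\big] E_{\tau,t}(x_1) \\
&\quad + \psi(X(t,\tau,x_2))\big[E_{\tau,t}(x_1) - E_{\tau,t}(x_2)\big],
\end{align*}
controlling the first bracket by $|\psi|_{\Lip}\,L_X(t-\tau)\,d(x_1,x_2)$ through Definition \ref{Flow}(iv) and $E_{\tau,t}(x_1)\le e^{M_c}$, and the second, via the exponential inequality, by $e^{M_c}\int_\tau^t |c(s,\cdot)|_{\Lip}\,d(X(s,\tau,x_1),X(s,\tau,x_2))\diff s \le e^{M_c}\big(\int_\tau^t |c(s,\cdot)|_{\Lip} L_X(s-\tau)\diff s\big)d(x_1,x_2)$, again by Definition \ref{Flow}(iv). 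Summing gives precisely $|G^{\psi,X,c}_{\tau,t}|_{\Lip}\le C^{X,c}_{\tau,t}$.

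To finish, I would observe that since the same $L_X$ bounds the Lipschitz constants of both $X(t,\tau,\cdot)$ and its inverse, applying the two bounds in succession gives $d(x_1,x_2)\le L_X(t-\tau)^2 d(x_1,x_2)$, whence $L_X(t-\tau)\ge 1$; therefore $C^{X,c}_{\tau,t}\ge e^{M_c}\ge\|G^{\psi,X,c}_{\tau,t}\|_\infty$, and $\|G^{\psi,X,c}_{\tau,t}\|_{BL}=\max\{\|G^{\psi,X,c}_{\tau,t}\|_\infty,|G^{\psi,X,c}_{\tau,t}|_{\Lip}\}\le C^{X,c}_{\tau,t}$, with finite Lipschitz seminorm ensuring $G^{\psi,X,c}_{\tau,t}\in BL(S)$. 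The only step that is more than triangle-inequality bookkeeping is the construction of $\omega_c$ in part (i), which I expect to be the main (though mild) obstacle: it rests essentially on the absolute continuity of $t\mapsto\int_0^t\|c(s,\cdot)\|_\infty\diff s$ guaranteed by the hypothesis $c\in L^1((0,T);BL(S))$.
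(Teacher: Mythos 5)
Your proof is correct. Note that the paper itself does not prove this lemma at all: it imports the statement from Lemma 3.17 of Ref.~\cite{our_book_ACPJ}, so there is no in-paper argument to compare against; your write-up supplies exactly the kind of self-contained proof one would expect there. The decomposition of each difference of products into a $\psi$-increment times an exponential and a bounded $\psi$ times an exponential increment, combined with the mean-value bound $|e^a-e^b|\leq e^{\max(|a|,|b|)}|a-b|$, Definition \ref{Flow}(i),(iv), and the absolute continuity of $t\mapsto\int_0^t\|c(s,\cdot)\|_{\infty}\diff s$, is the standard route and reproduces the constant $C^{X,c}_{\tau,t}$ exactly. Your closing observation that $L_X(t-\tau)\geq 1$ (by composing the Lipschitz bounds for $X(t,\tau,\cdot)$ and $X^{-1}(t,\tau,\cdot)$) is a genuinely necessary step that is easy to overlook, since without it the sup-norm bound $e^{M_c}$ need not be dominated by $C^{X,c}_{\tau,t}$; it requires $S$ to contain at least two distinct points, but in the degenerate single-point case the Lipschitz seminorm vanishes and the claim is trivial, so nothing is lost.
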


The proof of existence and uniqueness of solutions in the sense of Definition \ref{solution_linear_general_defn} is based on Banach Fixed Point Theorem applied to an operator built on \eqref{LinearModelMetric}. In order to find a suitable complete metric which allows our operator to be a contraction, we generalize the so-called {\it Bielecki norm}. Motivated by Ref. \cite{kwapisz}, for nonnegative $f \in L^1(0,T)$ and $\lambda > 0$, a new metric on $C^0([0,T]; \mathcal{M}^+(S))$ is given by
\begin{equation}\label{L1Bielecki}
\rho_{\lambda, f}(\mu_{\bullet}, \nu_{\bullet}) = \sup_{t \in [0,T]}\left[ e^{-\lambda \int_0^t f(u)  \diff u } \rho_F(\mu_t, \nu_t) \right]\!.
\end{equation}
We remark that for $f = 1$, \eqref{L1Bielecki} reduces to a metric related to the standard Bielecki norm.

According to Theorem \ref{thm:M+_complete_wrt_flat}, the space  $(\mathcal{M}^+(S), \rho_F)$ is complete and consequently $C^0([0,T]; \mathcal{M}^+(S))$ equipped with the natural metric
\begin{equation}\label{eq:natural_metric_cmeas}
\rho_{\infty}(\mu_{\bullet}, \nu_{\bullet}) := \sup_{t \in [0,T]} \, \rho_F(\mu_t, \nu_t).
\end{equation}
is complete as well. As the metric $\rho_{\lambda,f}$ is equivalent to $\rho_{\infty}$, the space $(C^0([0,T]; \mathcal{M}^+(S)),\rho_{\lambda, f})$ is also  complete. 
Now that there is a suitable complete metric space, we construct our solution as a fixed point of the map
    \begin{align*}
    T: C^0([0,T]; \mathcal{M}^+(S)) \to C^0([0,T]; \mathcal{M}^+(S))
    \end{align*}
defined as follows. Given a measure-valued map $\mu_{\bullet} \in C^0([0,T]; \mathcal{M}^+(S))$, we define $(T\mu_{\bullet})_{\bullet}$ by the right-hand side of \eqref{LinearModelMetric}, i.e.
	\begin{align}\label{eq:def_of_map_fixed_point_32}
	&\left(T\mu_{\bullet}\right)_t=X(t,0,\cdot)_{\#}(\mu_0(\cdot) e^{\int_0^t c(s,\,X(s,\,0,\cdot)) \diff s})  \nonumber \\
	 	&+\int_0^tX(t,\tau,\cdot)_{\#}\left(\int_S \left[\eta (\tau, y) (\cdot)\right] \diff \mu_{\tau} (y) e^{\int_{\tau}^t c(s,\,X(s,\,\tau,\cdot)) \diff s}\right) \diff \tau \nonumber\\
	 	&+ \int_0^tX(t,\tau,\cdot)_{\#}\left(N(\tau)(\cdot) e^{\int_{\tau}^t c(s,\,X(s,\,\tau,\cdot)) \diff s}\right) \diff \tau.
	 \end{align}
It can be shown that $(T\mu_{\bullet})_{\bullet}:[0,T]\to \mathcal{M}^+(S)$ is continuous. The proof is mainly based on the uniform continuity of the map $t\mapsto \psi(X(t,\tau,x))e^{\int_{\tau}^tc(s,X(s,\tau,x))\diff s}$ (see Lemma \ref{easier_notation} (i)). The details are not hard but rather technical, so we skip the proof. 

We are now able to prove the first part of Theorem \ref{existence_gener_lin_thm} concerning existence and uniqueness for the linear model. The claimed continuous dependencies are considered separately.

\begin{proofof}{Existence and Uniqueness in Theorem \ref{existence_gener_lin_thm}}
Consider two measure-valued maps $\mu^{(1)}_{\bullet}, \mu^{(2)}_{\bullet} \in C^0([0,T]; \mathcal{M}^+(S))$ with an identical initial measure $ \mu_0~\in~\mathcal{M}^+(S)$. According to Assumption \ref{assumptions_general_model}  for all $\psi \in BL(S)$ and for all $t \in [0,T]$ the map
    \begin{align}
    \label{boundedness_of_integral_over_eta}
        S \ni x \mapsto \int_S \psi(y)  \diff[\eta(t,x)](y)  
    \end{align}
is in $BL(S)$ with norm bounded by $\|\psi\|_{BL}\, \| \eta(t,\cdot)\|_{BL(S;\mathcal{M}^+)}$. Thus, we estimate with \eqref{how_to_integrate} and \eqref{constant_C_X,c} 
	\begin{align*}
		&\rho_F\left((T\mu_{\bullet}^{(1)})_t, (T\mu_{\bullet}^{(2)})_t\right)  \\
		& \leq\sup_{\|\psi\|_{BL} \leq 1} \int_0^t \int_S \int_S \psi(X(t,\tau,y))\, e^{\int_{\tau}^{t} c(s, X(s,\,\tau,\,y)) \diff  s} \diff[\eta(\tau,x)](y)  \diff\!\left[\mu^{(1)}_{\tau} - \mu^{(2)}
		 _{\tau} \right](x)  \diff\tau  \\ 
		& \leq \, \int_0^t  2\, C^{X,c}_{\tau,t}\, \| \eta(\tau,\cdot) \|_{BL(S;\mathcal{M}^+)}\, \rho_F\left(\mu^{(1)}_{\tau},\mu^{(2)}_{\tau} \right)  \diff\tau.
	\end{align*}
Let $C = \sup_{0 \leq t \leq T} \sup_{0 \leq \tau \leq t} C^{X,c}_{\tau,t}$. Then
$$
\rho_F((T\mu^{(1)}_{\bullet})_t, (T\mu^{(2)}_{\bullet})_t) \leq
C\, \int_0^t  \| \eta(\tau,\cdot) \|_{BL(S;\mathcal{M}^+)} ~ \rho_F\left(\mu^{(1)}_{\tau},\mu^{(2)}_{\tau} \right)  \diff\tau.
$$
For some $\lambda >0 $ and $f \in L^1(0,T)$ to be chosen later, we apply the generalized Bielecki norm $\rho_{\lambda, f}$ defined in \eqref{L1Bielecki}
	\begin{align*}
	&\rho_{\lambda, f}(T\mu^{(1)}_{\bullet}, T\mu^{(2)}_{\bullet}) = \sup_{t \in [0,T]}\left[ e^{-\lambda \int_0^t f(u)  \diff u } \rho_F((T\mu^{(1)})_t, (T\mu^{(2)})_t) \right]   \\
	 & \leq C \sup_{t \in [0,T]} e^{-\lambda \int_0^t f(u)  \diff u } \int_0^t  \| \eta(\tau,\cdot) \|_{BL(S;\mathcal{M}^+)} ~	
	 \rho_F\left(\mu^{(1)}_{\tau},\mu^{(2)}_{\tau} \right)  \diff\tau  \\
	&  \leq C \rho_{\lambda, f}(\mu^{(1)}_{\bullet}, \mu^{(2)}_{\bullet}) \sup_{t \in [0,T]} e^{-\lambda \int_0^t f(u)  \diff u } \int_0^T  \| \eta(\tau,\cdot) \|_{BL(S;\mathcal{M}^+)} ~e^{\lambda \int_0^{\tau} f(u) 
	   \diff u }  \diff\tau  \\
	    & = \frac{C}{\lambda} \rho_{\lambda, f}(\mu^{(1)}_{\bullet}, \mu^{(2)}_{\bullet}) \left(1 - e^{-\lambda \int_0^T f(u)  \diff u } \right) \leq \frac{C}{\lambda}\rho_{\lambda, f}(\mu^{(1)}_{\bullet}, \mu^{(2)}_{\bullet}).
	\end{align*}
In the last line we chose $f(u) = \| \eta(u,\cdot) \|_{BL(S;\mathcal{M}^+)}$ and observed that the map $t \mapsto \lambda f(t)e^{\lambda \int_0^t f(u)  \diff u }$ is the derivative of $t \mapsto e^{\lambda \int_0^t f(u)  \diff u }$ due to Lebesgue Differentiation Theorem (see Theorem 6 in Appendix E in Ref. \cite{EvansBook}). Setting $\lambda = 2\,C$ finishes the proof.
\end{proofof}

Now we prove the continuous dependency of solutions on initial conditions. The proof for continuous dependency on model functions is based on similar computations combined with triangle inequalities.
Before we start with the initial conditions, we prove an {\it \`{a} priori} bound on the measure solution.

\begin{lemma}\label{bound_on_TV_norm_gen_model}
Suppose $[0,T] \ni t \mapsto \mu_t \in \mathcal{M}(S)$ solves \eqref{LinearModelMetric} in the sense of Definition \ref{solution_linear_general_defn}. Then $\|\mu_t \|_{BL^*}$ is uniformly bounded for all $t\in [0,T]$.
\end{lemma}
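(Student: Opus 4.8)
The plan is to test the integral representation \eqref{how_to_integrate} against the constant function $\psi \equiv 1$ and to turn the resulting identity for the total mass into a Grönwall inequality. The key preliminary observation is that, since a solution in the sense of Definition \ref{solution_linear_general_defn} satisfies $\mu_t \in \mathcal{M}^+(S)$, its flat norm coincides with its total mass: the constant function $\psi \equiv 1$ is admissible because $\|1\|_{\infty}=1$ and $|1|_{\Lip}=0$, so $\|1\|_{BL}=1$, giving $\int_S 1 \diff\mu_t = \mu_t(S)$, while any $\psi$ with $\|\psi\|_{BL}\le 1$ has $|\psi|\le 1$ and hence $\int_S \psi \diff\mu_t \le \mu_t(S)$. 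Thus $\|\mu_t\|_{BL^*}=\mu_t(S)$, and the same identity applies to the nonnegative measures $\eta(\tau,x)$ and $N(\tau)$.

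First I would substitute $\psi \equiv 1$ into \eqref{how_to_integrate}. Then $\psi(X(t,\tau,\cdot))\equiv 1$, so the push-forward maps play no role, and each exponential weight is bounded by the finite constant $K := e^{\int_0^T \|c(s,\cdot)\|_{\infty}\diff s}$; this is finite since $\int_0^T \|c(s,\cdot)\|_{\infty}\diff s \le \int_0^T \|c(s,\cdot)\|_{BL}\diff s < \infty$ by Assumption \ref{assumptions_general_model}(i). Carrying these bounds through the three terms, the $\eta$-term becomes $\int_S \eta(\tau,x)(S)\diff\mu_\tau(x) \le \|\eta(\tau,\cdot)\|_{BL(S;\mathcal{M}^+)}\,\|\mu_\tau\|_{BL^*}$ (as in \eqref{norm_bound_intB}), the influx term is $\int_0^t \|N(\tau)\|_{BL^*}\diff\tau$, and the initial term is controlled by $K\|\mu_0\|_{BL^*}$. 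This produces
\[
\|\mu_t\|_{BL^*} \le K\|\mu_0\|_{BL^*} + K\int_0^t \|N(\tau)\|_{BL^*}\diff\tau + K\int_0^t \|\eta(\tau,\cdot)\|_{BL(S;\mathcal{M}^+)}\,\|\mu_\tau\|_{BL^*}\diff\tau.
\]

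Next I would combine the first two terms into the finite constant $a := K\|\mu_0\|_{BL^*} + K\int_0^T \|N(\tau)\|_{BL^*}\diff\tau$, which is finite by Assumption \ref{assumptions_general_model}(iii), while the kernel $\tau \mapsto K\|\eta(\tau,\cdot)\|_{BL(S;\mathcal{M}^+)}$ is nonnegative and lies in $L^1(0,T)$ by Assumption \ref{assumptions_general_model}(ii). Since $\mu_\bullet$ is narrowly continuous, the map $t \mapsto \|\mu_t\|_{BL^*}=\int_S 1\diff\mu_t$ is continuous, hence finite and bounded on $[0,T]$, so Grönwall's inequality in integral form with an $L^1$ kernel applies and yields the explicit uniform bound
\[
\|\mu_t\|_{BL^*} \le a\, \exp\!\Big(K\int_0^T \|\eta(\tau,\cdot)\|_{BL(S;\mathcal{M}^+)}\diff\tau\Big) = a\, e^{K\|\eta\|_{L^1_T(BL(S;\mathcal{M}^+))}},
\]
which is independent of $t$.

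I expect no serious obstacle: this is a routine Grönwall estimate. The only points deserving care are the identification $\|\mu_t\|_{BL^*}=\mu_t(S)$ for nonnegative measures and the bookkeeping that every constant is genuinely finite, namely that the exponential weight is uniformly bounded by $K$ and that both $\|\eta(\tau,\cdot)\|_{BL(S;\mathcal{M}^+)}$ and $\|N(\tau)\|_{BL^*}$ are time-integrable, each supplied by Assumption \ref{assumptions_general_model}. It is arguably even cleaner to estimate $\int_S\psi\diff\mu_t$ directly for arbitrary $\|\psi\|_{BL}\le 1$ using $|\psi\circ X|\le 1$, which avoids invoking the structure of the push-forward altogether and leads to the same inequality.
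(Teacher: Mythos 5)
Your proof is correct and takes essentially the same route as the paper's own argument: testing \eqref{how_to_integrate} with $\psi \equiv 1$, bounding the exponential weights by $e^{\int_0^T \|c(s,\cdot)\|_{\infty}\diff s}$, and closing the resulting integral inequality with Gr\"{o}nwall. The extra bookkeeping you supply (the identity $\|\mu_t\|_{BL^*}=\mu_t(S)$ for nonnegative measures, and the finiteness and $L^1$-integrability of the constants via Assumption \ref{assumptions_general_model}) is precisely what the paper leaves implicit, so there is nothing to correct.
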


\begin{proof}
Formula \eqref{how_to_integrate} with $\psi = 1$ yields 	\begin{align*}
	&\| \mu_t \|_{BL^*} \leq\, \| \mu_0 \|_{BL^*} \, e^{\int_{0}^t \| c(s,\cdot) \|_{\infty} \diff s}  \\
	&\,+ \int_0^t  \left(\sup_{x\in S} \|\eta (\tau, x)\|_{BL^*} \|\mu_\tau \|_{BL^*} +\|N (\tau)\|_{BL^*} \right) \emph{}	
	e^{\int_{\tau}^t \| c(s,\cdot) \|_{\infty} \diff s} \diff \tau 
	\end{align*}
so that the claim follows by Gr\"{o}nwall's inequality.
\end{proof}

\begin{proofof}{Continuous dependence on Initial Conditions in Theorem \ref{existence_gener_lin_thm}}
Using the identities \eqref{how_to_integrate} and \eqref{constant_C_X,c} as well as the boundedness of \eqref{boundedness_of_integral_over_eta} we obtain a bound
	\begin{align*}
	&\rho_F(\mu_t, \nu_t) \leq  \sup_{\|\psi\|_{BL} \leq 1} \int_S \psi(X(t,0,x)) e^{\int_0^t c(s,\, X(s,\,0,\,x))  \diff  s}  \diff\!\left[\mu_0 - \nu_0\right](x)  \\
	 & \phantom{ = }+\sup_{\|\psi\|_{BL} \leq 1} \int_0^t \int_S \int_S \psi(X(t,\tau,y)) e^{\int_{\tau}^t c(s,\, X(s,\,\tau,\,y)) \diff  s}  \diff[\eta(\tau,x)](y)  \diff\!\left[\mu_{\tau} - \nu_{\tau}\right](x)  \diff\tau  \\
	&  \leq \, C^{X,c}_{0,t} \, \rho_F(\mu_0, \nu_0) +\int_0^t C^{X,c}_{\tau,t} \, \| \eta(\tau,\cdot) \|_{BL(S;\mathcal{M}^+)} ~\rho_F(\mu_{\tau}, \nu_{\tau})  \diff\tau.
	\end{align*}
Note that this quantity is finite due Assumptions \ref{assumptions_general_model} and we can thus conclude the proof by applying Gr\"{o}nwall's inequality. 
\end{proofof}

\section{Proof of the nonlinear model}
\label{section:proof_nonlinear}

\begin{proofof}{Theorem \ref{uniq_theorem_nonlin_general}}
We aim at reducing the nonlinear model to a linear version where the measure argument is fixed. To this end, let $\mu_{\bullet}$ be in $C^0([0,T]; \mathcal{M}^+(S))$ and define the operator $T\mu_{\bullet} := \nu_{\bullet}$, where $\nu_{\bullet}$ is the unique solution of the linear equation
\begin{align*}
&\nu_t=X(t,0,\cdot,\mu_{\bullet})_{\#}\left(\mu_0(\cdot) e^{\int_0^t c(s,\,X(s,\,0,\cdot,\,\mu_{\bullet}), \mu_{s}) \diff  s}\right) \phantom{\int}  \\
& \, +\int_0^tX(t,\tau,\cdot, \mu_{\bullet})_{\#}\left(\int_S \left[\eta (\tau, y, \mu_{\tau}) (\cdot)\right]  \diff \nu_{\tau} (y) e^{\int_{\tau}^t c(s,\,X(s,\,\tau,\cdot,\, \mu_{\bullet}),\, \mu_{s}) \diff  s}\right) \diff\tau \\
& \, + \int_0^tX(t,\tau,\cdot, \mu_{\bullet})_{\#}\left(N(\tau, \mu_{\tau})(\cdot) e^{\int_{\tau}^t c(s,\,X(s,\,\tau,\cdot,\, \mu_{\bullet}), \,\mu_{s}) \diff  s}\right) \diff\tau. 
 \end{align*}
If the model functions $c(t,x,\mu_t)$, $\eta(t,x,\mu_t)(\cdot)$ and $N(t,\mu_t)(\cdot)$ are integrable with respect to time, then Theorem \ref{existence_gener_lin_thm} guarantees existence and uniqueness of $\nu_{\bullet}$. The continuity of the map $\mu_{\bullet}$ and the compactness of $[0,T]$ yield a bound $R>0$ such that $\sup_{t\in[0,T]}\|\mu_t\|_{BL^*}\leq R$ and consequently we can conclude the integrability condition from the uniform bounds \eqref{cond_suff_to_est_unif_bounds} in Assumption \ref{assumptions_general_model_non}.  As in the linear case one can check that $\nu_{\bullet}$ is in $C^0([0,T]; \mathcal{M}^+(S))$. Now define the set
    \begin{align*}
\mathcal{K}_{C} = \{\nu_{\bullet} \in C^0([0,T]; \mathcal{M}^+(S))\mid \nu_{0} = \mu_{0}, \sup_{t \in [0,T]} \| \nu_t \|_{BL^*} \leq C\}.
    \end{align*}
Choosing $C$ big enough, we can achieve $T: \mathcal{K}_{C} \to \mathcal{K}_{C}$ (see Lemma \ref{bound_on_TV_norm_gen_model} and the uniform bounds from condition \eqref{cond_suff_to_est_unif_bounds}).

Now, consider two solutions $\mu_{\bullet}^{(1)}, \mu_{\bullet}^{(2)} \in C^0([0,T]; \mathcal{M}^+(S))$ with identical initial condition $\mu_{0}$ and let $\nu_{\bullet}^{(1)} = T(\mu_{\bullet}^{(1)})_{\bullet}$ and $\nu_{\bullet}^{(2)} = T(\mu_{\bullet}^{(2)})_{\bullet}$. The continuity with respect to model functions (Theorem \ref{existence_gener_lin_thm}) and Assumption \ref{assumptions_general_model_non} with $R = C$ yields
	\begin{align*}
	&\rho_F\left(\nu^{(1)}_t, \nu^{(2)}_t\right) \leq  \,C_M \int_0^t \sup_{y \in S} \rho_F\left(\eta(\tau,y,\mu_{\tau}^{(1)}), \eta(\tau, y, \mu_{\tau}^{(2)})\right)  \diff\tau  \\ 
	&\phantom{ = }+ C_M \int_0^t \left[\rho_F\left(N(\tau, \mu_{\tau}^{(1)}), N (\tau, \mu_{\tau}^{(2)}) \right)  + \|c(\tau, \cdot, \mu_{\tau}^{(1)})-  c(\tau, \cdot, \mu_{\tau}^{(2)}) \|_{\infty}   \right] \diff\tau  \\
	&\phantom{ = }+C_M \sup_{0 \leq \tau_1 \leq \tau_2 \leq t} \|X(\tau_2, \tau_1, \cdot, \mu^{(1)}_{\bullet}) -  X(\tau_2, \tau_1, \cdot, \mu^{(2)}_{\bullet})  \|_{\infty}  \phantom{\int}\\ 
	&\leq C_M  \int_0^t \left(L_{R,\eta}(\tau) + L_{R,N}(\tau) + L_{R,c}(\tau)\right) \rho_F(\mu_{\tau}^{(1)}, \mu_{\tau}^{(2)})  \diff\tau \\
	&\phantom{ = } + C_M \sup_{0 \leq \tau_1 \leq \tau_2 \leq t} \int_{\tau_1}^{\tau_2} L_{R,X}(\tau)\, \rho_F(\mu_{\tau}^{(1)}, \mu_{\tau}^{(2)})  \diff\tau   \\ 
	 & \leq  \, C_M \sup_{0 \leq \tau_1 \leq \tau_2 \leq t} \int_{\tau_1}^{\tau_2} L_{R}(\tau)\, \rho_F(\mu_{\tau}^{(1)}, \mu_{\tau}^{(2)}) \diff\tau.
	\end{align*}
We apply the generalized Bielecki norm \eqref{L1Bielecki} once more with some $\lambda > 0$ and $f = L_R$ and see that
	\begin{align*}
	&\rho_{\lambda,L_R}(\nu^{(1)}_{\bullet}, \nu^{(2)}_{\bullet}) \leq \sup_{t \in [0,T]}\left[ e^{-\lambda \int_0^t L_R(u)  \diff u } \rho_F(\nu_t^{(1)}, \nu_t^{(2)}) \right]  \\
	&\leq C_M \sup_{t \in [0,T]} e^{-\lambda \int_0^t L_R(u)  \diff u } \sup_{0 \leq \tau_1 \leq \tau_2 \leq t} \int_{\tau_1}^{\tau_2} L_{R}(\tau) \rho_F(\mu_{\tau}^{(1)}, \mu_{\tau}
	 ^{(2)})  \diff\tau \\ 
	  &\leq C_M \, \rho_{\lambda,L_R}(\mu^{(1)}_{\bullet}, \mu^{(2)}_{\bullet}) \sup_{t \in [0,T]} e^{-\lambda \int_0^t \, L_R(u)  \diff u } \sup_{0 \leq \tau_1 \leq \tau_2 \leq t}\int_{\tau_1}
	  ^{\tau_2} L_{R}(\tau)\, e^{\lambda \int_0^{\tau} L_R(u)  \diff u }  \diff\tau.
	\end{align*}
For $0 \leq \tau_1 \leq \tau_2 \leq t$ we can estimate the last factor as
\begin{align*}
&\int_{\tau_1}^{\tau_2} L_{R}(\tau) e^{\lambda \int_0^{\tau} L_R(u)  \diff u }  \diff\tau = \frac{1}{\lambda} \left[e^{\lambda \int_0^{\tau_2} L_R(u)  \diff u } - e^{\lambda \int_0^{\tau_1} L_R(u)  \diff u }  \right] \\ &\leq 
\frac{1}{\lambda} \left[e^{\lambda \int_0^{t} L_R(u)  \diff u } - 1\right],
\end{align*}
so that we obtain the final estimate
$$
\rho_{\lambda,L_R}(\nu^{(1)}_{\bullet}, \nu^{(2)}_{\bullet}) \leq \frac{C_M}{\lambda} \rho_{\lambda,L_R}(\mu^{(1)}_{\bullet}, \mu^{(2)}_{\bullet}).
$$
In particular, for $\lambda$ large enough $T$ is contractive and thus we conclude existence and uniqueness by Banach Fixed Point Theorem. Finally, we note that the continuity estimates \eqref{cont_mf_gen_nonlinear_chap3} and \eqref{cont_ic_gen_nonlinear_chap3} follow directly from the corresponding results for the linear problem (see Theorem \ref{existence_gener_lin_thm}). 
\end{proofof}

\section{Proof of consistency with the PDE model}
\label{section:proof_of_consistency}
To establish the equivalence of the generalized model and the PDE model we prove that the representation formula for the generalized solution provides the unique measure solution to model \eqref{gen_model_Rd}.

 \begin{proofof}{Theorem \ref{reprimpliesmeasuresol}} As already suggested in the statement of the theorem we want to prove that the solution $\mu_{\bullet}$ of \eqref{NonlinearModelMetric}  also satisfies the weak formulation \eqref{Nonlin_weak_gen_ver_chap3_Rd}. Uniqueness then follows from Theorem \ref{thm:Rd_Main}.
 
We first note that by Remark \ref{rem:approximation_of_compact_support}, the test functions can actually assumed to be compactly supported in space. Additionally, by the density result (Theorem D.5 in Ref. \cite{our_book_ACPJ}, 
 it sufficient to consider test functions of the form
$f(t,x)=\varphi(t)\psi(x)$
with $\varphi \in C^1([0,T]) \cap W^{1,\infty}([0,T])$ and $\psi \in C^1_c(\mathbb{R}^d)$. We claim that the first part of the theorem will follow if we show that the map 
$$
[0,T] \ni t \mapsto \mathcal{F}(t) = \int_{\mathbb{R}^d} \psi(x)  \diff\mu_t(x)
$$
is differentiable a.e. in $t$  with derivative
	\begin{align}\label{formula_for_derivative_mang}
	&\mathcal{F}'(t) = \int_{\mathbb{R}^d} \nabla_x  \psi(x) \cdot b(t,x,\mu_t) + \psi(x) \, c(t,x,\mu_t)    \diff\mu_t(x) \nonumber  \\ 
	&\phantom{ = }+\int_{\mathbb{R}^d} \left(\int_{\mathbb{R}^d} \psi(y)  \diff\!\left[\eta(t,x,\mu_t) \right](y) \right)  \diff\mu_t(x) + 
	\int_{\mathbb{R}^d}  \psi(x)  \diff\!\left[N(t,\mu_t) \right](x). 
	\end{align}
Let us assume for the moment that \eqref{formula_for_derivative_mang} holds. In this case, Assumption \ref{assumptions_general_model_non} implies that $\mathcal{F}'$ is integrable and consequently $\mathcal{F}\in W^{1,1}([0,T])$, i.e. $\mathcal{F}$ is absolutely continuous. Now by the  Fundamental Theorem of Calculus and the product rule we see
\begin{align*}
&\int_{\mathbb{R}^d}f(T,x)\diff\mu_T(x)-\int_{\mathbb{R}^d}f(0,x)\diff\mu_0(x) =\,\varphi(T)\mathcal{F}(T)-\varphi(0)\mathcal{F}(0)\\
&=\int_0^T\partial_t[\varphi(t)\mathcal{F}(t)] \diff t =\int_0^T\partial_t\varphi(t)\mathcal{F}(t)+\varphi(t)\mathcal{F}'(t) \diff t,
 \end{align*}
which is exactly the right-hand side of  \eqref{Nonlin_weak_gen_ver_chap3_Rd} with test function $f(t,x)=\varphi(t)\psi(x)$.

We complete the proof by showing \eqref{formula_for_derivative_mang}. To this this end we introduce the following abbreviations
$$
\mathcal{X}_{t,\tau}(x) := X_b(t,\tau,x, \mu_{\bullet}), \qquad \qquad
\mathcal{E}_{t,\tau}(x):=e^{\int_{\tau}^t c(s, \,\mathcal{X}_{s,\tau}(x),\, \mu_{s})\diff s},
$$
which brings \eqref{how_to_integrate} in the form
	\begin{align}\label{eq:hti_withnewnotation}
	&\int_{\mathbb{R}^d} \psi(x)  \,\mathrm{d}\mu_t(x) = \int_{\mathbb{R}^d} \psi(\mathcal{X}_{t,0}(x)) \, \mathcal{E}_{t,0}(x)   \,\mathrm{d}\mu_0(x) \nonumber  \\
	&\phantom{ = }+\int_0^t \int_{\mathbb{R}^d} \int_{\mathbb{R}^d} \psi(\mathcal{X}_{t,\tau}(y)) \, \mathcal{E}_{t,\tau}(y)  \,\mathrm{d}[\eta(\tau,x,\mu_{\tau})](y)  \,\mathrm{d}\mu_{\tau}(x)  \,\mathrm{d}\tau \nonumber\\
	&\phantom{ = }+\int_0^t  \int_{\mathbb{R}^d} \psi(\mathcal{X}_{t,\tau}(x)) \, \mathcal{E}_{t,\tau}(x)  \,\mathrm{d}[N(\tau,\mu_{\tau})](x)   \,\mathrm{d}\tau.
	\end{align}
To prove formula \eqref{formula_for_derivative_mang} each term on the right- hand side of 	\eqref{eq:hti_withnewnotation}
has to be differentiated. We start with the first term and use \eqref{general_ODE_chap3} to compute
	\begin{align*}
& \frac{\mathrm{d}}{\mathrm{d}{t}} \left[\int_{\mathbb{R}^d} \psi(\mathcal{X}_{t,0}(x)) \, \mathcal{E}_{t,0}(x)   \,\mathrm{d}\mu_0(x) 
	\right] =\\
	&   = \int_{\mathbb{R}^d} \left[\nabla_{x}\psi(\mathcal{X}_{t,0}(x)) \cdot b(t,\mathcal{X}_{t,0}(x), \mu_t) \right] \mathcal{E}_{t,0}(x)   \,\mathrm{d}\mu_0(x)  \\ 
	&  \phantom{ = } +\int_{\mathbb{R}^d} \psi(\mathcal{X}_{t,0}(x)) \, \mathcal{E}_{t,0}(x)  \, c(t, \mathcal{X}_{t,0}(x), \mu_t)  \,\mathrm{d}\mu_0(x)  =: A + B.
	\end{align*}
The treatment of the second term of \eqref{eq:hti_withnewnotation} requires differentiation under the Bochner integral so that we have to apply the Dominated Convergence Theorem for Bochner integrals (see e.g. Theorem H.7 in Ref. \cite{our_book_ACPJ}) which yields 
\begin{align*}
	&\frac{\mathrm{d}}{\mathrm{d}{t}} \left[\int_0^t \int_{\mathbb{R}^d} \int_{\mathbb{R}^d} \psi(\mathcal{X}_{t,\tau}(y)) \, \mathcal{E}_{t,\tau}(y)  \,\mathrm{d}[\eta(\tau,x,\mu_{\tau})](y)  \,\mathrm{d}\mu_{\tau}(x)  \,\mathrm{d}\tau \right] \\ 
	&=  \int_{\mathbb{R}^d} \int_{\mathbb{R}^d} \psi(y)  \,\mathrm{d}[\eta(t,x,\mu_{t})](y)  \,\mathrm{d}\mu_{t}(x)\\ 
	&\phantom{ = }+ \int_0^t \int_{\mathbb{R}^d} \int_{\mathbb{R}^d} \left[\nabla_y \psi(\mathcal{X}_{t,\tau}(y)) \cdot b(t, \mathcal{X}_{t,\tau}(y),\mu_t) \right] \, \mathcal{E}_{t,\tau}(y)  \,\mathrm{d}[\eta(\tau,x,\mu_{\tau})](y)  \,\mathrm{d}\mu_{\tau}(x)  \,\mathrm{d}\tau  \\ 
	&\phantom{ = }+\int_0^t \int_{\mathbb{R}^d} \int_{\mathbb{R}^d} \psi(\mathcal{X}_{t,\tau}(y)) \, \mathcal{E}_{t,\tau}(y) \, c(t, \mathcal{X}_{t,\tau}(y),\mu_t)  \,\mathrm{d}[\eta(\tau,x,\mu_{\tau})](y)  \,\mathrm{d}\mu_{\tau}(x)  \,\mathrm{d}\tau   \\
	&=: C + D + E.\phantom{\int}
	\end{align*}
The third term in \eqref{eq:hti_withnewnotation} can be computed analogously to be 
	\begin{align*}
	&\frac{\mathrm{d}}{\mathrm{d}{t}} \left[\int_0^t \int_{\mathbb{R}^d} \psi(\mathcal{X}_{t,\tau}(x)) \, \mathcal{E}_{t,\tau}(x)   \,\mathrm{d}\!\left[N(\tau, \mu_{\tau}) \right](x)  \,\mathrm{d}\tau \right] =  \\ 
	  &=  \int_{\mathbb{R}^d} \psi(x)  \,\mathrm{d}[N(t,\mu_{t})](x) \\
	 &\phantom{ = }+ \int_0^t \int_{\mathbb{R}^d}  \left[\nabla_x \psi(\mathcal{X}_{t,\tau}(x)) \cdot b(t, \mathcal{X}_{t,\tau}(x),\mu_t) \right] \, \mathcal{E}_{t,\tau}(x)  \,\mathrm{d}[N(\tau,\mu_{\tau})](x)   \,\mathrm{d}\tau  \\ 
	&\phantom{ = }+\int_0^t \int_{\mathbb{R}^d} \psi(\mathcal{X}_{t,\tau}(x))\, \mathcal{E}_{t,\tau}(x) \, c(t, \mathcal{X}_{t,\tau}(x),\mu_t)  \,\mathrm{d}[N(\tau, \mu_{\tau})](x)  \,\mathrm{d}\tau   \\
	 &=: F + G + H.\phantom{\int}
	\end{align*}
Collecting all terms involving $\nabla_x \psi\cdot b$ and using formula \eqref{how_to_integrate} leads to the following simplification
	\begin{align}
	\label{eq:split_up_part_1}
	A + D + G = \int_{\mathbb{R}^d} \nabla_x \psi(x) \cdot b(t,x,\mu_t)  \diff\mu_t(x),
	\end{align}
and similarly for all terms with $\psi\, c$
	\begin{align}
	\label{eq:split_up_part_2}
	B + E + H = \int_{\mathbb{R}^d} \psi(x) \, c(t,x,\mu_t)  \diff\mu_t(x).
	\end{align}
Thus, formular \eqref{formula_for_derivative_mang} follows directly if we combine \eqref{eq:split_up_part_1} and \eqref{eq:split_up_part_2} with the remaining terms $C$ and $F$.
\end{proofof}

\begin{remark}
\label{rem:approximation_of_compact_support}
We shortly explain why it is sufficient to consider test functions which are compactly supported in space. Take a test function $\varphi\in C^1([0,T]\times \mathbb{R}^d)\cap W^{1,\infty}([0,T]\times\mathbb{R}^d)$ and let $\eta\in C_c^{\infty}(\mathbb{R}^d)$ be a smooth cut-off function with $\eta\equiv 1$ on $B_1(0)$, $0\leq \eta\leq 1$ and $\mathrm{supp}(\eta)\subseteq B_2(0)$. Then for $n\in \mathbb{N}$ we define sequence of functions $\varphi_n(t,x)=\varphi(t,x)\,\eta\left( \frac x n\right)$, where each component of $x$ is multiplied with the scalar $1/n$. By construction $\varphi_n\to \varphi$ for $n\to \infty$ pointwise and by Dominated Convergence also in $L^1$. Consequently, the only term in the weak formulation  \eqref{Nonlin_weak_gen_ver_chap3_Rd} which needs special attention is the term involving $\nabla_x \varphi$. Using Assumption \ref{ass_special_caseRd}, narrow continuity of $\mu_{\bullet}$ and dominated convergence yields
  \begin{align*}
 & \left\|\int_0^T \int_{\mathbb{R}^d} \left[\nabla_x  \varphi_n(t,x)-\nabla_x\varphi(t,x)\right] \cdot b(t,x,\mu_t)\diff\mu_t(x)  \diff t\right\|\to 0 \qquad (n\to \infty),
 \end{align*}
so that the approximation is indeed valid in the limit.
\end{remark}

\section{More on Measure Theory}
\label{appendix: measure theory}

Here we provide a comprehensive and complete summary of the measure theoretic results which form the theoretical background of this paper. Unless stated otherwise, the spaces $\mathcal{M}(S)$ and $\mathcal{M}^+(S)$ will always be equipped with the flat metric and all topological properties refer to the corresponding topology.

\begin{definition}
\label{defn:tightnesssequence}
 \begin{itemize}
     \item [(i)] For a measure $\mu\in \mathcal{M}(S)$ with corresponding Jordan decomposition $(\mu^+,\mu^-)$ (cf. Theorem 3.4 in Ref. \cite{Folland.1984}), the \textbf{variation} $|\mu|$ of $\mu$ is defined by
	\begin{align*}
		|\mu(A)|:=\mu^+(A)+\mu^-(A) \hspace{1cm} \forall A\in  \mathcal{B}(S).
	\end{align*}
    Note that the \textbf{total variation norm} is given by $\|\mu\|_{TV}=|\mu(S)|$.
     \item [(ii)] A set $\mathcal{N}\subseteq \mathcal{M}(S)$ is called \textbf{tight} if
for each $\varepsilon>0$ there exists a compact set $K\subseteq S$ with
	\begin{align*}
		|\mu|(S\setminus K)<\varepsilon \hspace{0.5cm} \forall \mu \in \mathcal{N},
	\end{align*}
	In particular, a sequence $(\mu_n)_{n\in \mathbb{N}} \subset \mathcal{M}(S)$ is \textbf{tight}, if $\mathcal{N}:=\{\mu_n\mid  n\in \mathbb{N}\}$ is tight and a single measure $\mu$ is \textbf{tight}, if $\{\mu\}$ is tight.
 \end{itemize}
\end{definition}

\begin{theorem}
\label{thm:M(S)_separablel}
Let $(S, d)$ be separable. Then the spaces $\mathcal{M}(S)$ and $\mathcal{M}^+(S)$ are separable with countable dense set given by the linear span $\mathrm{lin}\{\delta_s\}_{s\in S}$.
\end{theorem}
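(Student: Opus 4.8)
The plan is to exhibit an explicit countable dense subset of $\mathcal{M}^+(S)$ (and then of $\mathcal{M}(S)$) consisting of finitely supported measures with rational weights located at a fixed countable dense subset of $S$. Fix a countable dense set $D=\{s_k\}_{k\in\mathbb{N}}\subseteq S$ and let
$$
\mathcal{D}_+ := \Big\{ \textstyle\sum_{k=1}^K q_k\,\delta_{s_k} : K\in\mathbb{N},\ q_k\in\mathbb{Q}_{\geq 0},\ s_k\in D \Big\},
$$
which is countable and contained in $\mathrm{lin}\{\delta_s\}_{s\in S}$. I would show that $\mathcal{D}_+$ is $\rho_F$-dense in $\mathcal{M}^+(S)$; the claim for $\mathcal{M}(S)$ then follows by applying the argument to the Jordan parts $\mu^{\pm}$ and allowing rational coefficients of either sign.

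Given $\mu\in\mathcal{M}^+(S)$ of mass $m=\mu(S)$ and $\varepsilon>0$, the first step is a diameter-controlled discretization. Since $D$ is dense, the open balls $\{B(s_k,\delta)\}_k$ cover $S$ for every $\delta>0$; disjointifying them via $A_k := B(s_k,\delta)\setminus\bigcup_{j<k}B(s_j,\delta)$ yields a Borel partition of $S$ with $A_k\subseteq B(s_k,\delta)$. Setting $\nu := \sum_k \mu(A_k)\,\delta_{s_k}$, for any $\psi$ with $\|\psi\|_{BL}\leq 1$ one has $|\psi(x)-\psi(s_k)|\leq d(x,s_k)<\delta$ on $A_k$, hence
$$
\Big|\int_S \psi\,\diff\mu - \int_S\psi\,\diff\nu\Big| = \Big|\sum_k\int_{A_k}\big(\psi(x)-\psi(s_k)\big)\,\diff\mu(x)\Big| \leq \delta\sum_k\mu(A_k) = \delta\,m,
$$
so $\|\mu-\nu\|_{BL^*}\leq \delta\,m$. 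I emphasize that this estimate, rather than any tightness argument, is the heart of the proof: because the flat norm tests only against $1$-Lipschitz functions, controlling the oscillation of $\psi$ on small cells suffices, and no completeness of $S$ is needed.

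The remaining two steps are routine truncation and rational rounding, each controlled by elementary bounds. Since $\sum_k\mu(A_k)=m<\infty$, choosing $K$ with $\sum_{k>K}\mu(A_k)<\varepsilon$ and truncating to $\nu^K := \sum_{k\leq K}\mu(A_k)\,\delta_{s_k}$ costs at most $\|\nu-\nu^K\|_{BL^*}\leq\|\nu-\nu^K\|_{TV}=\sum_{k>K}\mu(A_k)<\varepsilon$, using $\|\cdot\|_{BL^*}\leq\|\cdot\|_{TV}$. Finally, replacing each weight $\mu(A_k)$ by a nearby nonnegative rational $q_k$ costs at most $\sum_{k\leq K}|\mu(A_k)-q_k|\,\|\delta_{s_k}\|_{BL^*}\leq\sum_{k\leq K}|\mu(A_k)-q_k|$, since $\|\delta_{s}\|_{BL^*}\leq 1$, and this can be made $<\varepsilon$. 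Choosing $\delta$ so small that $\delta\,m<\varepsilon$ and combining the three estimates by the triangle inequality produces an element of $\mathcal{D}_+$ within $3\varepsilon$ of $\mu$, which proves density.

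For $\mathcal{M}(S)$ one repeats the construction for $\mu^+$ and $\mu^-$ separately and subtracts, yielding density of the countable family of signed rational Dirac combinations, still a subset of $\mathrm{lin}\{\delta_s\}_{s\in S}$. The only genuinely substantive step is the diameter estimate of the second paragraph; once it is in place, everything reduces to the finiteness of the total mass and the inequality $\|\cdot\|_{BL^*}\leq\|\cdot\|_{TV}$.
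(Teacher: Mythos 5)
Your proof is correct and follows essentially the same approach the paper sketches: approximating an arbitrary finite measure by rational-weighted Dirac combinations at a countable dense subset of $S$, using the Lipschitz bound on test functions to control the error. The paper gives only this outline, and your diameter-controlled partition, truncation, and rounding estimates supply exactly the details it omits (handling $\mathcal{M}^+(S)$ directly and then $\mathcal{M}(S)$ via the Jordan decomposition, rather than the paper's reverse order, is an inessential difference).
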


The main idea of the proof is to show that the linear span of Dirac measures concentrated in the dense subset of $S$ approximates all measures in $\mathcal{M}(S)$ in the flat metric sufficiently well. By restricting the weights to be rational, the approximation set becomes countable while still being dense. Further restricting the weights to be nonnegative yields that  $\mathcal{M}^+(S)$ is also separable. 

The next theorem characterizes the predual space of the bounded Lipschitz function.
\begin{theorem}
\label{thm:Predual_bounded_Lipschitz}
Let $(S,d)$ be separable and denote  $E := \overline{\mathcal{M}(S)}^{\|\cdot\|_{BL^*}}$. Then it holds that
	$E^*=BL(S)$.
\end{theorem}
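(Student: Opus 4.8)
The plan is to construct an explicit isometric isomorphism between $BL(S)$ and $E^*$, exploiting that $\mathcal{M}(S)$ — and in particular the linear span of Dirac measures — is dense in $E$ by construction and by Theorem~\ref{thm:M(S)_separablel}. I would define two maps. First, for $f \in BL(S)$ let $Jf \in E^*$ be the continuous extension to $E$ of the functional $\mathcal{M}(S) \ni \mu \mapsto \int_S f \diff\mu$; this is well defined because $\big|\int_S f \diff\mu\big| \le \|f\|_{BL}\,\|\mu\|_{BL^*}$ follows immediately from the definition \eqref{defeq:flatnorm} of the flat norm, so the functional is $\|\cdot\|_{BL^*}$-continuous on the dense subspace $\mathcal{M}(S)$ and extends uniquely to $E$ with $\|Jf\|_{E^*} \le \|f\|_{BL}$. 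Second, for $\phi \in E^*$ define $\Phi\phi : S \to \mathbb{R}$ by $(\Phi\phi)(s) := \phi(\delta_s)$, which makes sense since $\delta_s \in \mathcal{M}(S) \subseteq E$.

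Next I would verify that $\Phi$ lands in $BL(S)$ and is a contraction. Since $\|\delta_s\|_{BL^*} \le 1$, one gets $|(\Phi\phi)(s)| \le \|\phi\|_{E^*}$, hence $\|\Phi\phi\|_{\infty} \le \|\phi\|_{E^*}$; and since $\|\delta_s - \delta_{s'}\|_{BL^*} \le d(s,s')$ (because $|\psi(s)-\psi(s')| \le |\psi|_{\Lip}\,d(s,s') \le d(s,s')$ whenever $\|\psi\|_{BL}\le 1$), one gets $|(\Phi\phi)(s)-(\Phi\phi)(s')| = |\phi(\delta_s - \delta_{s'})| \le \|\phi\|_{E^*}\,d(s,s')$, so $|\Phi\phi|_{\Lip} \le \|\phi\|_{E^*}$ and $\Phi\phi \in BL(S)$ with $\|\Phi\phi\|_{BL} \le \|\phi\|_{E^*}$. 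The easy composition $\Phi \circ J = \mathrm{Id}_{BL(S)}$ is then immediate from $(\Phi(Jf))(s) = (Jf)(\delta_s) = \int_S f \diff\delta_s = f(s)$.

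The heart of the argument — and the step I expect to be the main obstacle — is the reverse identity $J \circ \Phi = \mathrm{Id}_{E^*}$, i.e.\ that every $\phi \in E^*$ is recovered from its values on Dirac measures. The functionals $\phi$ and $J(\Phi\phi)$ are both continuous on $E$, and they agree on every finite combination $\sum_i c_i \delta_{s_i}$, since $\phi\big(\sum_i c_i \delta_{s_i}\big) = \sum_i c_i (\Phi\phi)(s_i) = \int_S \Phi\phi \diff\big(\sum_i c_i \delta_{s_i}\big) = \big(J(\Phi\phi)\big)\big(\sum_i c_i \delta_{s_i}\big)$. By Theorem~\ref{thm:M(S)_separablel} the linear span $\mathrm{lin}\{\delta_s\}_{s\in S}$ is dense in $\mathcal{M}(S)$ for $\|\cdot\|_{BL^*}$, and $\mathcal{M}(S)$ is dense in $E$ by definition, so this span is dense in $E$; two continuous functionals that agree on a dense set coincide, giving $J(\Phi\phi) = \phi$. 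Thus $J$ and $\Phi$ are mutually inverse, and combining $\|Jf\|_{E^*} \le \|f\|_{BL}$ with $\|f\|_{BL} = \|\Phi(Jf)\|_{BL} \le \|Jf\|_{E^*}$ shows that both are isometries. Hence $E^* = BL(S)$ isometrically, as claimed.
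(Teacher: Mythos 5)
Your proposal is correct and takes essentially the same approach as the paper: both identify a functional $\phi\in E^*$ with the function $s\mapsto\phi(\delta_s)$, show that this function lies in $BL(S)$, and use density of $\mathrm{lin}\{\delta_s\}_{s\in S}$ in $E$ (Theorem~\ref{thm:M(S)_separablel}) to extend/conclude. The only differences are cosmetic: you prove boundedness and Lipschitz continuity by the direct estimates $\|\delta_s\|_{BL^*}\leq 1$ and $\|\delta_s-\delta_{s'}\|_{BL^*}\leq d(s,s')$ where the paper argues by contradiction with sequences, and you spell out the mutual-inverse and isometry claims that the paper leaves implicit.
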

 
In view of Banach Fixed Point Theorem the completeness of the measure space has been crucial for the definition or rather the domain of our operator, see
\eqref{eq:natural_metric_cmeas}. Unfortunately, Theorem 1.36 in Ref. \cite{our_book_ACPJ} states 
that $E=\mathcal{M}(S)$ if and only if $S$ is uniformly discrete, i.e. if $\inf_{\substack{x\neq y\\x,y\in S}}d(x,y)>0$.  As we don't want to restrict ourselves to uniformly discrete metric spaces, we will concentrate on the cone $\mathcal{M}^+(S)$ to achieve completeness. 

\begin{theorem}
\label{thm:M+_complete_wrt_flat}
Let $(S,d)$ be a Polish metric space. Then every Cauchy sequence $(\mu_n)_{n\in \mathbb{N}}\subset \mathcal{M}^+(S)$ converges in $\mathcal{M}^+(S)$ and in particular $\mathcal{M}^+(S)$ is complete with respect to the flat norm.
\end{theorem}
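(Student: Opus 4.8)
The plan is to verify that any $\rho_F$-Cauchy sequence $(\mu_n)_{n\in\N}\subset\mathcal M^+(S)$ is bounded in total variation and tight, to extract a narrow limit $\mu\in\mathcal M^+(S)$ via Prokhorov's theorem, and then to promote narrow convergence of the subsequence to $\rho_F$-convergence of the full sequence.

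For the total-variation bound I would test against the constant function $1$, which belongs to $BL(S)$ with $\|1\|_{BL}=1$. Since $\mu_n\ge0$, this gives $\|\mu_n\|_{TV}=\mu_n(S)$ and $|\mu_n(S)-\mu_m(S)|\le\|\mu_n-\mu_m\|_{BL^*}$, so $(\mu_n(S))_n$ is Cauchy in $\R$ and therefore $\sup_n\|\mu_n\|_{TV}<\infty$.

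The heart of the argument is tightness, and this is where completeness of $(S,d)$ is indispensable: a closed subset of $S$ is compact precisely when it is totally bounded, so it suffices to produce, for every $\eps>0$ and every scale $\delta>0$, a finite union of $\delta$-balls carrying all but $\eps$ of the mass of each $\mu_n$. Fixing $\eps,\delta$, I would use the Cauchy property to choose $N$ with $\|\mu_n-\mu_N\|_{BL^*}<\rho$ for $n\ge N$, the threshold $\rho$ to be fixed at the end. The finite family $\mu_1,\dots,\mu_N$ is tight by Ulam's theorem, so some compact $K$ satisfies $\mu_i(S\setminus K)<\eps/2$ for $i\le N$; cover $K$ by balls $B(x_1,\delta/2),\dots,B(x_m,\delta/2)$ and set $U=\bigcup_j B(x_j,\delta)$. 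The cutoff $\psi(x)=\max\{0,1-\tfrac2\delta\dist(x,K)\}$ equals $1$ on $K$, vanishes outside the $\delta/2$-neighbourhood of $K$ (which lies in $U$), and has $\|1-\psi\|_{BL}=2/\delta$, so $\mathbbm{1}_{S\setminus U}\le 1-\psi$ and for $n\ge N$
\[
\mu_n(S\setminus U)\le\int_S(1-\psi)\diff\mu_n\le\mu_N(S\setminus K)+\tfrac2\delta\|\mu_n-\mu_N\|_{BL^*}<\tfrac\eps2+\tfrac{2\rho}\delta.
\]
Choosing $\rho<\delta\eps/4$ bounds this by $\eps$, while for $n<N$ one has $\mu_n(S\setminus U)\le\mu_n(S\setminus K)<\eps$ directly. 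Running this with $\delta=1/k$ and $\eps\,2^{-k}$ and intersecting the closures $\overline U_k$ produces a closed, totally bounded---hence compact---set $F$ with $\sup_n\mu_n(S\setminus F)\le\eps$, which is tightness.

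With boundedness and tightness in hand, Prokhorov's theorem yields a subsequence $\mu_{n_k}$ converging narrowly to some $\mu\in\mathcal M^+(S)$; by Theorem \ref{thm: flat norm equivalent to narrow convergence} this narrow convergence is the same as $\rho_F(\mu_{n_k},\mu)\to0$. Since a Cauchy sequence possessing a convergent subsequence converges to the same limit, $\rho_F(\mu_n,\mu)\to0$ with $\mu\in\mathcal M^+(S)$, establishing completeness. I expect the tightness step to be the principal obstacle: the flat metric only controls integrals against Lipschitz functions, so passing the tightness of the initial segment $\mu_1,\dots,\mu_N$ to the entire tail requires the Lipschitz cutoff together with the two-scale ($\delta/2$ versus $\delta$) ball covering, and it is precisely completeness that converts the resulting closed totally bounded set into a compact one.
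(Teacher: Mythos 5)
Your proof is correct and takes essentially the same approach as the paper's: boundedness in mass, tightness of the Cauchy sequence obtained from Ulam's theorem for a finite subfamily plus a Lipschitz cutoff estimate transferring mass control across the flat metric, a two-scale ball covering with a dyadic intersection yielding the compact set, and finally Prokhorov's theorem combined with Theorem \ref{thm: flat norm equivalent to narrow convergence} and the Cauchy-with-convergent-subsequence argument. The only difference is organizational: the paper routes the tightness step through the general statement that totally bounded subsets of $(\mathcal{M}^+(S),\rho_F)$ are tight (Proposition \ref{prop:flat17feb4.27}, built on Lemma \ref{lem:set_inequality_flat}), whereas you inline the same cutoff inequality directly against the single reference measure $\mu_N$.
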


 The next theorem shows that the flat norm metrizes the narrow convergence on the cone. For the proof, we refer to Theorem 1.57 in Ref. \cite{our_book_ACPJ}.
\begin{theorem}
\label{thm: flat norm equivalent to narrow convergence}
Let $(S,d)$ be a Polish metric space. Then $\|\cdot\|_{BL^*}$ metricizes the narrow topology on $\mathcal{M}^+(S)$, i.e. we have 
	\begin{align*}
     \mu_n \to \mu \text{ narrowly }  
     \Longleftrightarrow 
		 \|\mu_n-\mu\|_{BL^*}\to 0.
	\end{align*}
\end{theorem}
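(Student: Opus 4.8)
The plan is to prove the two implications separately, treating the easy direction (flat convergence implies narrow convergence) first and reserving the bulk of the effort for the reverse implication, where uniformity over the unit ball of $BL(S)$ is the real difficulty.

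For the implication ``$\|\mu_n - \mu\|_{BL^*} \to 0 \Rightarrow \mu_n \to \mu$ narrowly'', I would first observe that flat convergence already yields $\int_S \psi \, \mathrm{d}\mu_n \to \int_S \psi \, \mathrm{d}\mu$ for every fixed $\psi \in BL(S)$: rescaling by $\|\psi\|_{BL}$ reduces to the unit ball, where the difference is dominated by $\|\mu_n - \mu\|_{BL^*}$. Taking $\psi \equiv 1$ gives convergence of total masses, $\mu_n(S) \to \mu(S)$. To upgrade convergence against Lipschitz functions to convergence against all of $C_b(S)$, I would invoke the Portmanteau theorem for finite measures: it suffices to show $\liminf_n \mu_n(U) \geq \mu(U)$ for every open $U \subseteq S$ together with the already-established mass convergence. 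For this, approximate the indicator of $U$ from below by the truncated distance functions $\psi_k(x) = \min\{1, k\, \dist(x, S \setminus U)\}$, each bounded and $k$-Lipschitz with $0 \leq \psi_k \leq \mathbf{1}_U$ and $\psi_k \uparrow \mathbf{1}_U$ pointwise. Then $\liminf_n \mu_n(U) \geq \liminf_n \int_S \psi_k \, \mathrm{d}\mu_n = \int_S \psi_k \, \mathrm{d}\mu$, and letting $k \to \infty$ via monotone convergence yields $\liminf_n \mu_n(U) \geq \mu(U)$. This direction requires no compactness.

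For the converse, ``$\mu_n \to \mu$ narrowly $\Rightarrow \|\mu_n - \mu\|_{BL^*} \to 0$'', the obstacle is that the defining supremum ranges over the whole (infinite) unit ball of $BL(S)$, so convergence against each individual test function is not enough. The plan is to localise to a compact set and exploit equicontinuity. Since $(S,d)$ is Polish, a narrowly convergent sequence together with its limit is uniformly tight by Prokhorov's theorem, so given $\varepsilon > 0$ I can fix a compact $K$ with $\sup_n \mu_n(S \setminus K) < \varepsilon$ and $\mu(S \setminus K) < \varepsilon$; the masses are moreover uniformly bounded by some $M$. The family of restrictions $\{\psi|_K : \|\psi\|_{BL} \leq 1\}$ is uniformly bounded and uniformly $1$-Lipschitz on $K$, hence totally bounded in $C(K)$ by Arzel\`a--Ascoli, so one can select a finite $\varepsilon$-net $\psi_1, \dots, \psi_m$. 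Narrow convergence disposes of these finitely many functions: for $n$ large, $\big|\int_S \psi_i \, \mathrm{d}(\mu_n - \mu)\big| < \varepsilon$ for all $i$. For arbitrary $\psi$ in the unit ball, choosing a net element $\psi_i$ uniformly $\varepsilon$-close on $K$ and splitting each integral as $\int_K + \int_{S \setminus K}$, the tail contributions are controlled by $2\varepsilon$ using $\|\psi\|_\infty \leq 1$, the net error by $\varepsilon M$, and the net element by $3\varepsilon$. Summing gives $\sup_{\|\psi\|_{BL} \leq 1} \big|\int_S \psi \, \mathrm{d}(\mu_n - \mu)\big| \leq (5 + M)\varepsilon$ for all large $n$, and since $\varepsilon$ is arbitrary this is precisely flat convergence.

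I expect the main obstacle to be exactly this uniformity step in the second implication: passing from convergence against each individual Lipschitz test function to uniform convergence over the entire unit ball. The two ingredients that resolve it --- uniform tightness (to discard the noncompact part where the supremum could otherwise concentrate) and Arzel\`a--Ascoli (to replace the infinite unit ball by a finite net on the compact core) --- are where the Polishness of $S$ is genuinely used, with completeness entering through Prokhorov's theorem and separability guaranteeing the relevant measure-theoretic regularity.
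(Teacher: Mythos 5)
Your proof is correct, but note that the paper itself contains no proof of this theorem: it is stated with a pointer to Theorem 1.57 of the companion monograph \cite{our_book_ACPJ}, so your argument is compared against a citation rather than an in-text proof. What you give is the classical Dudley-style argument, and both implications are sound: the easy direction via Portmanteau (mass convergence plus the $\liminf$ inequality on open sets, with the truncated distance functions $\min\{1, k\,\dist(\cdot, S\setminus U)\}$ supplying the lower approximation), and the hard direction via uniform tightness plus an Arzel\`a--Ascoli $\varepsilon$-net on the compact core. Two small points deserve attention if this were written out in full. First, the converse Prokhorov theorem you invoke (relative compactness implies uniform tightness, valid precisely because $S$ is Polish) is usually stated for probability measures; for elements of $\mathcal{M}^+(S)$ you either need the finite-measure version (e.g.\ Bogachev, Theorem 8.6.2) or a normalization step, in which case the degenerate case $\mu(S)=0$ should be handled separately --- there flat convergence is immediate since $\bigl|\int_S \psi \diff(\mu_n-\mu)\bigr| \leq \mu_n(S) \to 0$ for $\|\psi\|_{BL}\leq 1$. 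Second, your constant bookkeeping is slightly off ($\varepsilon M$ should be $2M\varepsilon$ for the net error, giving $(5+2M)\varepsilon$ in total), which is immaterial. Your closing diagnosis is also accurate: completeness of $S$ enters exactly through Prokhorov/Ulam (tightness can fail on incomplete separable spaces, e.g.\ the rationals), and this is the genuine content of the Polish hypothesis.
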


The last result concerns the characterization of positive functionals on $BL(S)$. These functionals appear naturally in limiting procedures such as the construction of the Bochner integral on $E$ in Proposition \ref{proposition:rigorous definition of integrals}. As in general $\mathcal{M}(S)\subsetneq E$, not all elements in $E$ are measures which is potentially problematic in terms of interpretability of solutions. Fortunately, it turns out that positive functionals are actually nonnegative measures \cite{HilleWorm}.

\begin{definition}
\label{def:BL^*_+}
The space of \textbf{positive linear functionals} on $BL(S)$ is given by
	\begin{align*}
		BL(S)^*_+:=\{T\in BL(S)^*\mid T(\psi)\geq 0 \,\forall \psi\in BL(S),\psi\geq 0\}.
	\end{align*}
\end{definition}

\begin{theorem}\label{satz:respresentation of positive functionals}
Let $(S,d)$ be a Polish metric space. Then
	\begin{align*}
		E\cap BL(S)_+^*=\mathcal{M}^+(S).
	\end{align*}
\end{theorem}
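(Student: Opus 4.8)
The inclusion $\mathcal{M}^+(S)\subseteq E\cap BL(S)^*_+$ is immediate: every $\mu\in\mathcal{M}^+(S)$ lies in $\mathcal{M}(S)\subseteq E$, and as a functional $\psi\mapsto\int_S\psi\diff\mu$ it is nonnegative on nonnegative $\psi$, hence belongs to $BL(S)^*_+$. All the substance is in the reverse inclusion, and my plan is to realise an arbitrary $T\in E\cap BL(S)^*_+$ as a flat limit of honest nonnegative measures and then invoke completeness of $(\mathcal{M}^+(S),\rho_F)$ (Theorem \ref{thm:M+_complete_wrt_flat}). Note first that $T(\mathbf 1)\le\|T\|_{BL^*}<\infty$ and $T(\mathbf 1)\ge 0$, and record the one-sided bound that positivity provides: if $g\in BL(S)$ with $\|g\|_\infty\le c$, then $c\mathbf 1\pm g\ge 0$ gives $T(c\mathbf 1\pm g)\ge 0$, i.e. $|T(g)|\le c\,T(\mathbf 1)$.

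The engine of the proof is the following $\sigma$-smoothness property, and this is the one place where membership $T\in E$ is used essentially. \emph{Claim:} if $(g_n)\subset BL(S)$ decreases pointwise to $0$ with $B:=\sup_n\|g_n\|_{BL}<\infty$, then $T(g_n)\to 0$. Indeed, given $\varepsilon>0$, since $T\in E$ we may pick a genuine finite signed measure $\mu_m\in\mathcal{M}(S)$ with $\|T-\mu_m\|_{BL^*}<\varepsilon/(2B)$; then
$$
|T(g_n)|\le |(T-\mu_m)(g_n)|+\Big|\int_S g_n\diff\mu_m\Big|\le \tfrac{\varepsilon}{2}+\int_S g_n\diff|\mu_m|,
$$
and the last integral tends to $0$ by dominated convergence for the \emph{finite} measure $|\mu_m|$ (here $0\le g_n\le g_1$ is integrable). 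I expect this to be the main obstacle: a merely positive and bounded functional on $BL(S)$ over a non-proper space may carry ``mass at infinity'' and violate this property, and such functionals are precisely the elements of $BL(S)^*_+\setminus E$; completeness of $S$ enters through the dominated-convergence step, while the restriction $\|g_n\|_{BL}\le B$ is exactly what makes the flat-norm comparison usable.

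To build the approximants, fix $k\in\N$ and, using separability (Theorem \ref{thm:M(S)_separablel}), take a countable dense set $\{s_i\}$ and a locally finite Lipschitz partition of unity $\{\phi_i^{(k)}\}$ subordinate to $\{B(s_i,1/k)\}$, so that $\phi_i\ge 0$, $\sum_i\phi_i=\mathbf 1$, $\supp\phi_i\subseteq B(s_i,1/k)$, and the partial sums $\sum_{i\le n}\phi_i$ have Lipschitz seminorms bounded uniformly in $n$ (standard on metric spaces; cf. \cite{HilleWorm}). Applying the Claim to $g_n=\mathbf 1-\sum_{i\le n}\phi_i\downarrow 0$ yields $\sum_i T(\phi_i)=T(\mathbf 1)<\infty$, so $\nu_k:=\sum_i T(\phi_i)\,\delta_{s_i}$ is a well-defined element of $\mathcal{M}^+(S)$ with total mass $T(\mathbf 1)$. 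For $\psi$ with $\|\psi\|_{BL}\le 1$, writing $\Psi:=\sum_i\phi_i\,\psi(s_i)\in BL(S)$, the bound $\big|\sum_{i>n}\phi_i\psi(s_i)\big|\le\|\psi\|_\infty\,g_n$ together with the Claim gives $\big|T(\sum_{i>n}\phi_i\psi(s_i))\big|\le\|\psi\|_\infty\,T(g_n)\to 0$, whence $\nu_k(\psi)=T(\Psi)$. Since $\sum_i\phi_i=\mathbf 1$,
$$
\nu_k(\psi)-T(\psi)=T\Big(\textstyle\sum_i\phi_i\big(\psi(s_i)-\psi\big)\Big)=:T(h),\qquad \|h\|_\infty\le \tfrac1k,
$$
because on $\supp\phi_i$ one has $|\psi(s_i)-\psi|\le d(s_i,\cdot)<1/k$. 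The one-sided bound then gives $|T(h)|\le T(\mathbf 1)/k$, so $\|\nu_k-T\|_{BL^*}\le T(\mathbf 1)/k\to 0$.

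Thus $\nu_k\in\mathcal{M}^+(S)$ converges to $T$ in the flat norm, and completeness of $(\mathcal{M}^+(S),\rho_F)$ (Theorem \ref{thm:M+_complete_wrt_flat}) forces $T\in\mathcal{M}^+(S)$, which establishes $E\cap BL(S)^*_+\subseteq\mathcal{M}^+(S)$ and completes the proof.
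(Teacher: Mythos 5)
Your strategy is genuinely different from the paper's (which proves the reverse inclusion by contradiction, separating $T$ from the closed convex set $\mathcal{M}^+(S)$ in $E$ by Hahn--Banach and $E^*=BL(S)$, then testing the separating functional on Dirac masses), and its skeleton is sound: the positivity bound $|T(g)|\le \|g\|_\infty T(\mathbf{1})$, the $\sigma$-smoothness Claim (correct, and indeed the one place where $T\in E$ is used --- though note the dominated-convergence step only needs finiteness of $|\mu_m|$, not completeness of $S$; completeness enters through Theorem \ref{thm:M+_complete_wrt_flat}), and the final appeal to completeness of $(\mathcal{M}^+(S),\rho_F)$ are all fine. The gaps are in the middle, where you postulate the partition of unity. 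The properties you need are precisely the nontrivial ones, and they are not ``standard on metric spaces''. The natural greedy construction $\phi_n=\psi_n\prod_{i<n}(1-\psi_i)$ built from tent functions $\psi_i$ fails your hypothesis that the partial sums $\sum_{i\le n}\phi_i$ be uniformly Lipschitz: already in $\mathbb{R}^3$, if many $s_i$ with $i\le n$ lie (nearly) on a sphere of critical radius $1/k$ about a point $x$, then moving $x$ slightly toward that sphere increases all the corresponding $\psi_i$ simultaneously, and the Lipschitz constant of $\sum_{i\le n}\phi_i$ grows linearly in the number of such points. This hypothesis can be earned, but by a different construction: set $\theta_i(x)=\min\{1,\max\{0,\,2-2k\,d(x,s_i)\}\}$ and $\phi_n:=\max_{i\le n}\theta_i-\max_{i<n}\theta_i\ge 0$; then $\supp\phi_n\subseteq B(s_n,1/k)$, each partial sum equals $\max_{i\le n}\theta_i$, hence is $2k$-Lipschitz uniformly in $n$, and it increases pointwise to $\mathbf{1}$ by density (for every $x$ there is $s_i$ with $\theta_i(x)=1$). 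Note this partition is not locally finite; local finiteness is a red herring and is nowhere needed in your argument.

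The second, more serious gap is the unproved assertion $\Psi=\sum_i\phi_i\,\psi(s_i)\in BL(S)$. Lipschitz continuity of this infinite series amounts to an $\ell^1$-type regularity of the partition, $\sum_i|\phi_i(x)-\phi_i(y)|\lesssim d(x,y)$, which is tied to bounded multiplicity of the cover and has no reason to hold on a general Polish space --- in particular on the infinite-dimensional state spaces this paper is designed for; a priori $\Psi$ need not even be continuous. Without $\Psi\in BL(S)$ you may not apply $T$ to $\Psi$, to the tails $\sum_{i>n}\phi_i\psi(s_i)$, or to $h=\Psi-\psi$, so the concluding estimate collapses. The repair is to never form $\Psi$ at all and work with finite sums: for $\|\psi\|_{BL}\le 1$,
\begin{align*}
\sum_{i\le n}T(\phi_i)\psi(s_i)-T(\psi)
=T\Big(\sum_{i\le n}\phi_i\,(\psi(s_i)-\psi)\Big)-T(g_n\,\psi),
\end{align*}
where both arguments of $T$ are finite sums/products of $BL$ functions, hence in $BL(S)$. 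The first term is bounded by $T(\mathbf{1})/k$ in absolute value by your sup-norm bound, since $\big|\sum_{i\le n}\phi_i(\psi(s_i)-\psi)\big|\le \frac1k\sum_{i\le n}\phi_i\le \frac1k$ pointwise; the second satisfies $|T(g_n\psi)|\le\|\psi\|_\infty T(g_n)\to 0$ by monotonicity (as $-\|\psi\|_\infty g_n\le g_n\psi\le\|\psi\|_\infty g_n$) and your Claim. Letting $n\to\infty$ gives $\|\nu_k-T\|_{BL^*}\le T(\mathbf{1})/k$, and your completeness argument then finishes the proof. With these two repairs your constructive route is valid and a genuine alternative to the paper's separation argument.
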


\subsection{Proof of Theorem \ref{thm:Predual_bounded_Lipschitz}}

\begin{proof}
Consider a linear space $D \subseteq \mathcal{M}(S)$ spanned by finite linear combinations of Dirac deltas, i.e. 
	\begin{align*}
	D=\mathrm{lin}\{\delta_s\mid s\in S\}=\left\{\sum_{k=1}^n\alpha_k\delta_{s_k}\mid n\in\mathbb{N}, \alpha_k\in \mathbb{R}, s_k\in S\right\}
	\end{align*}
and let $D^{\ast}$ be its dual space, i.e. the space of linear functionals bounded with respect to the operator norm corresponding to $\|\cdot\|_{BL^*}$. As a first step we claim that we can identify $D^{\ast}$ with $BL(S)$. The conclusion will then follow by density of $D$ in $\mathcal{M}(S)$.

To prove the claim, for $f\in BL(S)$ define the linear and bounded functional $T_f\in D^*$  via
	\begin{align*}
	T_f:D\to \mathbb{R},\qquad T_f(\mu)=\int_Sf\diff\mu .
	\end{align*}
Now consider a functional $T \in D^{\ast}$. Due to linearity $T$  is uniquely defined by the values $T(\delta_s)$ for $s\in S$ and thus we identify $T$ with the function
	\begin{align*}
	f:S\to \mathbb{R},\qquad s\mapsto T(\delta_s).
	\end{align*}
We claim that $f\in BL(S)$. Assume by way of contradiction that $f$ was not bounded so that there exists a sequence $(s_n)_{n\in\mathbb{N}}$ with $\underset{n\to \infty}{\lim} f(s_n)= \infty$. But then
	\begin{align*}
		\lim_{n \to \infty}T(\delta_{s_n})=\lim_{n\to \infty} f(s_n)=\infty,
	\end{align*}
which contradicts the boundedness of the functional. On the other hand if $f$ was not Lipschitz continuous we can assume the existence of sequences $(s_n)_{n\in \mathbb{N}}, (t_n)_{n\in \mathbb{N}}$ such that
	\begin{align*}
		\lim_{n \to \infty} \frac {f(s_n)-f(t_n)}{d(s_n,t_n)}=\infty.
	\end{align*}
Define the sequence of measures $(\mu_n)_{n\in \mathbb{N}}$ with $\mu_n=\frac{\delta_{s_n}-\delta_{t_n}}{d(s_n,t_n)}$ and note that 
	\begin{align*}
		T(\mu_n)=\frac{T(\delta_{s_n})- T(\delta_{t_n})}{d(s_n,t_n)}= \frac {f(s_n)-f(t_n)}{d(s_n,t_n)}\to \infty.
	\end{align*}
However, this contradicts the boundedness of $T$ as for all $n\in \mathbb{N}$ 
	\begin{align*}
		\|\mu_n\|_{BL^*}=\sup_{\|\psi\|_{BL}\leq 1}\frac{\psi(s_n)- \psi(t_n)}{d(s_n,t_n)} \leq 1.
	\end{align*}
So we can indeed identify $D^{\ast}$ with $BL(S)$. By Theorem \ref{thm:M(S)_separablel}, $D$ is dense in $E$ and we can thus extend any  $T\in D^*$ uniquely to a functional in $\tilde T\in E^*$ (independent of the approximating sequence) completing the proof.
\end{proof}

\subsection{Proof of Theorem \ref{thm:M+_complete_wrt_flat}}

\begin{theorem}
\label{satz:M+boundedsubsequence} Let $(S,d)$ be a Polish metric space  and let $(\mu_n)_{n\in\mathbb{N}}$ be a tight sequence in $\mathcal{M}^+(S)$ with $\underset{n\in\mathbb{N}}{\sup}\,\mu_n(S)<\infty$. Then $(\mu_n)_{n\in \mathbb{N}}$ has a converging subsequence in $\mu\in \mathcal{M}^+(S)$.
\end{theorem}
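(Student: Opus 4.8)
The plan is to prove this directly as the sequential (``easy'') half of Prokhorov's theorem: I will construct the limit measure by hand and then invoke Theorem \ref{thm: flat norm equivalent to narrow convergence} to upgrade narrow convergence to flat-metric convergence. Note that I cannot use completeness of $\mathcal{M}^+(S)$ (Theorem \ref{thm:M+_complete_wrt_flat}) here, since the present statement is a lemma towards that very result; the limit must therefore be produced explicitly.

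First I would fix $M := \sup_n \mu_n(S) < \infty$ and, using tightness (Definition \ref{defn:tightnesssequence}), choose an increasing sequence of compact sets $K_1 \subseteq K_2 \subseteq \cdots$ with $\mu_n(S \setminus K_m) \le 1/m$ for all $n$ and $m$ (replacing the given compacts by finite unions to ensure monotonicity). The tail bound $1/m$, uniform in $n$, is the quantitative input that drives every subsequent estimate.

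Next, for each fixed $m$ the restrictions $\mu_n|_{K_m}$ form a bounded sequence in $\mathcal{M}^+(K_m)$, the dual of the separable space $C(K_m)$; since bounded balls of such a dual are weak-$*$ sequentially compact and metrizable, a subsequence converges weak-$*$ to some $\lambda_m \in \mathcal{M}^+(K_m)$. A Cantor diagonal argument over $m$ then yields a single subsequence (still denoted $\mu_n$) such that $\int_{K_m} f \, \diff\mu_n \to \int_{K_m} f \, \diff\lambda_m =: L_m(f)$ for every $m$ and every $f \in C_b(S)$, using that $f|_{K_m} \in C(K_m)$. For $f \ge 0$ one checks $L_m(f) \le L_{m'}(f)$ and $|L_{m'}(f) - L_m(f)| \le \|f\|_\infty / m$ for $m' > m$, so $L(f) := \lim_m L_m(f)$ exists, is linear and positive, and satisfies $|L(f)| \le M \|f\|_\infty$. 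Extending each $\lambda_m$ by zero to $S$ gives an increasing sequence of measures whose setwise supremum $\mu := \sup_m \lambda_m$ is a Borel measure with $\mu(S) \le M$ and $\int_S f \, \diff\mu = L(f)$ by monotone convergence; thus $\mu \in \mathcal{M}^+(S)$. (Alternatively, one may view $L$ as an element of $BL(S)^*_+$ and invoke Theorem \ref{satz:respresentation of positive functionals}, tightness being exactly what places $L$ in $E$.)

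Finally I would verify narrow convergence $\mu_n \to \mu$ by splitting, for $f \in C_b(S)$,
$$
\Big| \int_S f \, \diff\mu_n - L(f) \Big| \le \Big| \int_{K_m} f \, \diff\mu_n - L_m(f) \Big| + \frac{\|f\|_\infty}{m} + |L_m(f) - L(f)|,
$$
and choosing $m$ large (to control the last two terms uniformly in $n$) and then $n$ large. Since $\mu_n \to \mu$ narrowly, Theorem \ref{thm: flat norm equivalent to narrow convergence} yields $\|\mu_n - \mu\|_{BL^*} \to 0$, producing the desired convergent subsequence in $\mathcal{M}^+(S)$. I expect the construction of the limit measure on the non-compact space $S$ --- ensuring, via tightness and the monotone patching of the $\lambda_m$, that no mass escapes to infinity so that $L$ is genuinely represented by a finite nonnegative measure --- to be the main obstacle; the diagonal extraction and the final three-$\varepsilon$ estimate are routine once the uniform tail bound is in place.
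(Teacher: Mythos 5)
Your proof is correct in its main line, but it takes a genuinely different route from the paper. The paper's proof is two sentences: it cites Prokhorov's theorem (Theorem 2.3 in the reference by Da Prato and Zabczyk) to extract a narrowly convergent subsequence with limit in $\mathcal{M}^+(S)$, and then applies Theorem \ref{thm: flat norm equivalent to narrow convergence} to upgrade narrow convergence to flat-metric convergence. You share only that last step; everything before it is, in effect, a self-contained proof of the sequential compactness half of Prokhorov's theorem: compact exhaustion from tightness, weak-$*$ compactness of bounded sets in $\mathcal{M}(K_m)=C(K_m)^*$, a diagonal extraction, monotone patching of the partial limits $\lambda_m$, and a three-$\varepsilon$ verification of narrow convergence. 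What your approach buys is self-containment and a transparent view of how the uniform tail bound prevents mass from escaping; what it costs is length and the burden of re-deriving a classical result (including small steps you gloss over, such as passing from $L_m(f)\le L_{m'}(f)$ for all $f\ge 0$ in $C_b(S)$ to the setwise inequality $\lambda_m\le\lambda_{m'}$, which needs closed-regularity of finite Borel measures on metric spaces). Your stated care about not invoking Theorem \ref{thm:M+_complete_wrt_flat} is exactly right and matches the paper's logical ordering, since completeness is derived from this theorem.

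One caution: your parenthetical alternative --- viewing $L$ as an element of $BL(S)^*_+$ and invoking Theorem \ref{satz:respresentation of positive functionals} --- would be circular within this paper. The proof of Theorem \ref{satz:respresentation of positive functionals} uses Theorem \ref{thm:M+_complete_wrt_flat}, which in turn is proved from the very statement you are proving here. Since this is only an aside and your main construction of $\mu$ does not rely on it, the proof stands, but the aside should be deleted or flagged as requiring an independent proof of the representation theorem.
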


\begin{proof}
Applying the Theorem of Prokhorov (see Theorem 2.3 in Ref. \cite{Da_Prato_2014}) yields a narrowly converging subsequence with limit in $\mathcal{M}^+(S)$. By Theorem  \ref{thm: flat norm equivalent to narrow convergence} narrow convergence and convergence with respect to the flat metric are equivalent.
\end{proof}

\begin{remark}
\label{rem:proper}
If $S$ is compact, then Theorem \ref{satz:M+boundedsubsequence} implies that every bounded and
closed subset of $\mathcal{M}^+(S)$ is compact. This property is called \textbf{proper}.
\end{remark}

On arbitrary Borel subsets of $S$ it is in general not possible to conclude a triangular-type inequality for the flat metric. The following lemma emphasizes this while simultaneously providing a pseudo triangle inequality.
\begin{lemma}
\label{lem:set_inequality_flat}
Let $\delta \in (0,1]$, $T\in \mathcal{B}(S)$. Then for $\mu,\nu \in \mathcal{M}^+(S)$
	\begin{align*}
		&\mu(T)\leq\nu(U_{\delta}(T))+\frac 1 {\delta}\, \|\mu-\nu\|_{BL^*},\\
		&\mu(S\setminus U_{\delta}(T))\leq\nu(S\setminus T)+\frac 1 {\delta}\, \|\mu-\nu\|_{BL^*}.
	\end{align*}
Here $U_{\delta}(T):=\{x\in S\mid d(x,T)< \delta\}$ is the $\delta$-neighborhood of $T$.
\end{lemma}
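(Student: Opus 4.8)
The plan is to approximate the indicator functions of the relevant sets by Lipschitz cutoff functions and then feed suitable rescalings of them into the definition \eqref{defeq:flatnorm} of the flat norm. The one structural fact I would exploit is that for any fixed $T\in\mathcal{B}(S)$ the distance-to-the-set map $x\mapsto d(x,T)$ is $1$-Lipschitz, which lets me build cutoffs whose bounded-Lipschitz norm I can control explicitly by $1/\delta$.

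For the first inequality I would take
$$
\psi(x):=\max\left\{0,\,1-\frac{d(x,T)}{\delta}\right\}.
$$
By construction $0\le\psi\le 1$, so $\|\psi\|_{\infty}\le 1$; moreover $\psi\equiv 1$ on $T$, while $\psi(x)>0$ holds exactly when $d(x,T)<\delta$, i.e. exactly on $U_{\delta}(T)$, so $\psi\equiv 0$ on $S\setminus U_{\delta}(T)$. Since $x\mapsto d(x,T)/\delta$ is $(1/\delta)$-Lipschitz and truncation by $\max\{0,\cdot\}$ does not increase the Lipschitz seminorm, one gets $|\psi|_{\Lip}\le 1/\delta$. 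This is where the hypothesis $\delta\in(0,1]$ enters: it forces $1/\delta\ge 1\ge\|\psi\|_{\infty}$, hence $\|\psi\|_{BL}=\max\{\|\psi\|_{\infty},|\psi|_{\Lip}\}\le 1/\delta$. Consequently $\delta\psi$ is an admissible test function, $\|\delta\psi\|_{BL}\le 1$, and the definition of $\|\cdot\|_{BL^*}$ gives $\int_S\psi\,\diff(\mu-\nu)\le\tfrac1\delta\,\|\mu-\nu\|_{BL^*}$. Finally, $\int_S\psi\,\diff\mu\ge\mu(T)$ (because $\psi\ge 0$ and $\psi\equiv 1$ on $T$) and $\int_S\psi\,\diff\nu\le\nu(U_{\delta}(T))$ (because $0\le\psi\le 1$ and $\psi\equiv 0$ off $U_{\delta}(T)$), and rearranging yields $\mu(T)\le\nu(U_{\delta}(T))+\tfrac1\delta\|\mu-\nu\|_{BL^*}$.

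For the second inequality I would use the complementary cutoff
$$
\phi(x):=\min\left\{1,\,\frac{d(x,T)}{\delta}\right\}=1-\psi(x),
$$
which again satisfies $0\le\phi\le 1$ and $|\phi|_{\Lip}\le 1/\delta$, hence $\|\phi\|_{BL}\le 1/\delta$. Now $\phi\equiv 0$ on $T$ and $\phi\equiv 1$ on $S\setminus U_{\delta}(T)$, so $\int_S\phi\,\diff\mu\ge\mu(S\setminus U_{\delta}(T))$ while $\int_S\phi\,\diff\nu\le\nu(S\setminus T)$. The same flat-norm estimate $\int_S\phi\,\diff(\mu-\nu)\le\tfrac1\delta\|\mu-\nu\|_{BL^*}$ then delivers the second claim.

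There is no genuine analytic obstacle; the whole content lies in choosing the two cutoffs and verifying that $1/\delta$ is the correct bound for their bounded-Lipschitz norm, and the restriction $\delta\in(0,1]$ is precisely what guarantees the sup-norm does not dominate the Lipschitz seminorm. The only point deserving a little care is matching the strict inequality defining $U_{\delta}(T)=\{d(\cdot,T)<\delta\}$ with the supports of $\psi$ and $\phi$, which I have done above.
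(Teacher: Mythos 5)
Your proof is correct and follows essentially the same route as the paper: both arguments test $\mu-\nu$ against a $[0,1]$-valued Lipschitz cutoff that is $1$ on $T$, vanishes off $U_{\delta}(T)$, and has $BL$-norm at most $1/\delta$. The only difference is that the paper invokes an existence lemma from the literature for this cutoff, whereas you construct it explicitly as $\max\{0,\,1-d(x,T)/\delta\}$, which makes the argument self-contained.
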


\begin{proof}
According to Lemma 2.1 in Ref. \cite{GMT_Measures_under_Flat_Norm}, there is a Lipschitz continuous function $\eta:S\to [0,1]$ with $|\eta|_{\Lip}= 1/ {\delta}$ and such that $\eta \equiv 1$ on $T$ and $\eta \equiv 0$ on $S\setminus U_{\delta}(T)$. In particular, $\|\eta\|_{BL}\leq 1/\delta$. Thus,
	\begin{align*}
		\mu(T)
  \leq \int_S  \eta \diff\mu
  \leq \frac 1 {\delta}\ \|\mu-\nu\|_{BL^*}+\int_S \! \eta  \diff\nu
		\leq\frac 1 {\delta}\ \|\mu-\nu\|_{BL^*}+\nu(U_{\delta}(T)).
	\end{align*}
The other equation is proven similarly by choosing a function $\tilde \eta:S\to [0,1]$ such that $\tilde \eta \equiv 1$ on $S\setminus U_{\delta}(T)$ and $\tilde \eta\equiv 0$ on $T$.
\end{proof}

\begin{proposition}\label{prop:flat17feb4.27}
Let $(S,d)$ be a Polish metric space. If $\mathcal{N}\subseteq \mathcal{M}^+(S)$ is totally bounded with respect to $\|\cdot\|_{BL^*}$, then $\mathcal{N}$ is tight.
\end{proposition}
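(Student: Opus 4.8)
The plan is to combine three ingredients: the tightness of each individual finite measure on a Polish space (Ulam's theorem), the finite nets supplied by total boundedness, and the pseudo-triangle inequality of Lemma~\ref{lem:set_inequality_flat}, which lets one propagate tightness from the finitely many net measures to all of $\mathcal{N}$. At the end I would upgrade a $\delta$-neighbourhood of a compact set into a genuinely compact set by a Prokhorov-style iterated construction over shrinking scales.

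First I would establish the core sub-claim: for every $\eta>0$ and $r>0$ there exist finitely many points $y_1,\dots,y_k\in S$ with $\mu\big(S\setminus\bigcup_{j=1}^k B(y_j,r)\big)\leq\eta$ for all $\mu\in\mathcal{N}$. Fix $\eta,r$ and set $\delta=\min\{r/2,1\}\in(0,1]$. Total boundedness of $\mathcal{N}$ yields a finite net $\mu_1,\dots,\mu_m\in\mathcal{N}$ at the deliberately fine scale $\eta\delta/2$. Since each $\mu_i$ is a finite Borel measure on a Polish space it is tight, so there is a compact $K_i$ with $\mu_i(S\setminus K_i)<\eta/2$; put $K=\bigcup_i K_i$, a compact set with $\mu_i(S\setminus K)<\eta/2$ for every $i$. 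Given $\mu\in\mathcal{N}$, choose $i$ with $\|\mu-\mu_i\|_{BL^*}<\eta\delta/2$ and apply the second inequality of Lemma~\ref{lem:set_inequality_flat} with $T=K$:
\[
\mu(S\setminus U_\delta(K))\leq \mu_i(S\setminus K)+\frac{1}{\delta}\|\mu-\mu_i\|_{BL^*}<\frac{\eta}{2}+\frac{1}{\delta}\cdot\frac{\eta\delta}{2}=\eta.
\]
Covering the compact $K$ by finitely many balls of radius $\delta$ centred in $K$ gives $U_\delta(K)\subseteq\bigcup_j B(y_j,2\delta)\subseteq\bigcup_j B(y_j,r)$, which proves the sub-claim.

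With the sub-claim available, I would fix $\varepsilon>0$ and, for each $n\in\mathbb{N}$, apply it with $\eta=\varepsilon/2^n$ and $r=1/n$ to obtain a finite union of balls $A_n$ of radius $1/n$ satisfying $\mu(S\setminus A_n)<\varepsilon/2^n$ for all $\mu\in\mathcal{N}$. Setting $K_\varepsilon=\bigcap_n\overline{A_n}$ produces a closed set which, being contained in a finite union of balls of radius $1/n$ for every $n$, is totally bounded and hence compact by completeness of $S$. A union bound then gives $\mu(S\setminus K_\varepsilon)\leq\sum_n\mu(S\setminus A_n)<\varepsilon$ uniformly over $\mu\in\mathcal{N}$, which is precisely tightness.

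The main obstacle is the factor $1/\delta$ in Lemma~\ref{lem:set_inequality_flat}: a net taken merely at scale $\delta$ would yield only the useless bound $\mu(S\setminus U_\delta(K))<\eta/2+1$, so the net radius must be decoupled from the neighbourhood width and chosen of order $\eta\delta$. A second, more structural subtlety is that in an infinite-dimensional Polish space the $\delta$-neighbourhood of a compact set need not be compact; this is exactly why a single-scale estimate does not suffice and the argument must be iterated over the scales $1/n$ and intersected, in the spirit of the proof of Prokhorov's theorem, to recover total boundedness.
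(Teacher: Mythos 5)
Your proof is correct and follows essentially the same strategy as the paper's: a finite net from total boundedness, Ulam tightness of the finitely many net measures, the pseudo-triangle inequality of Lemma~\ref{lem:set_inequality_flat} to transfer the tightness bound to all of $\mathcal{N}$, and a Prokhorov-style intersection of closures of finite ball unions over shrinking scales to produce the compact set. The only (immaterial) differences are bookkeeping: you decouple the mass tolerance $\eta$ from the ball radius $r$ and apply the lemma directly to the compact set $K$ before discretizing, whereas the paper uses a single parameter with net scale $\varepsilon^2/4$ and applies the lemma to the $\varepsilon/2$-neighbourhood of an already-discretized finite set $F$.
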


\begin{proof}
We follow the reasoning from Ref. \cite{GMT_Measures_under_Flat_Norm}. Let $\mathcal{N}\subseteq \mathcal{M}^+(S)$ be a totally bounded set of measures. The total boundedness implies that we can discretize $\mathcal{N}$, i.e. for $\varepsilon \in (0,1)$, there exists a finite subset $\tilde{\mathcal{N}} \subseteq \mathcal{N}$ such that 
	\begin{align}
		\label{eq:flat17feb4.27}
		\mathcal{N} \subseteq \bigcup_{\nu \in \tilde{\mathcal{N}}} U_{{\varepsilon^2}/ 4}(\nu)=:U_{{\varepsilon^2}/ 4}
		(\tilde{\mathcal{N}}).
	\end{align}
As $S$ is separable and complete, $\{\mu\}$ is tight for any finite measure $\mu$, cf. Remark 13.27 in Ref. \cite{Klenke.2020}. It follows that the finite set $\tilde{\mathcal{N}}$ is tight. Hence, there exists a compact set $K\subseteq S$  such that $\nu(S\setminus K)< {\varepsilon}/2$ for all $\nu \in \tilde{\mathcal{N}}$.

In a next step, we discretize $K$. The compactness yields that there exists a finite set of points $F=F_{\varepsilon}\subset K$ such that $K\subseteq U_{ {\varepsilon}/ 2} (F)$. Note that still
    \begin{align*}
    \nu(S \setminus U_{ {\varepsilon}/ 2}(F))\leq \nu(S\setminus K)<{\varepsilon}/2 \qquad\forall\nu\in \tilde{\mathcal{N}}.
    \end{align*}
Now let $\mu \in \mathcal{N}$ be arbitrary with corresponding measure $\nu \in \tilde{\mathcal{N}}$ such that $\|\mu-\nu\|_{BL^*}\leq  {\varepsilon^2}/ 4$. Furthermore, observe that $U_{{\varepsilon}/2}(U_{ {\varepsilon}/ 2}(F))\subseteq U_{\varepsilon}(F)$ by triangle inequality. Hence, by Lemma \ref{lem:set_inequality_flat} 
	\begin{align*}
		\mu(S\setminus U_{\varepsilon}(F))\leq\mu(S\setminus U_{{\varepsilon}/2}(U_{ {\varepsilon}/ 2}(F)))\leq \nu(S\setminus
		 U_{{\varepsilon}/2}(F))+ \frac 2{\varepsilon}\, \|\mu-\nu\|_{BL^*}<\varepsilon.
	\end{align*}
 In particular, for each $\varepsilon \in (0,1)$ we constructed a finite set $F_{\varepsilon}\subseteq S$ such that $\mu(S\setminus U_{\varepsilon}(F_{\varepsilon}))<\varepsilon$ for all $\mu \in \mathcal{N}$.
 
We conclude the proof by using the family of sets $\{F_{\varepsilon}\}_{\varepsilon<1}$ to construct a suitable compact set to prove tightness of $\mathcal{N}$. Let $\delta \in (0,1]$ be arbitrary and for $n \in \mathbb{N}$ set $\varepsilon_n=\delta 2^{-n}$. With the considerations above we obtain for each $n\in \mathbb{N}$ a finite set $F_n:=F_{\varepsilon_n}$ such that
	\begin{align*}
		\mu(S\setminus U_{\varepsilon_n}(F_n))<\varepsilon_n \hspace{0.5cm}\forall\mu \in \mathcal{N}.
	\end{align*}
The infinite intersection of closed sets $K_{\delta}:={\bigcap}_{n\in \mathbb{N}}\overline{U_{\varepsilon_n}(F_n)}$ is closed and totally bounded by construction, i.e. compact. Furthermore, we have for all $\mu \in \mathcal{N}$
	\begin{align*}
		\mu(S\setminus K_{\delta})
		\leq\mu\left(\bigcup_{n\in \mathbb{N}}S		\setminus {U}_{\varepsilon_n}(F_n)\right)\leq
		\sum_{n=1}^{\infty}\mu(S\setminus {U}_{\varepsilon_n}(F_n))<\sum_{n=1}^{\infty}\varepsilon_n=\delta \sum_{n=1}^{\infty} 2^{-n}	
		=\delta.
	\end{align*}
In particular, $\mathcal{N}$ is tight.
\end{proof}

\begin{proofof}{Theorem \ref{thm:M+_complete_wrt_flat}}
We start with the observation that the set $\mathcal{N}:=\{\mu_n\mid n\in \mathbb{N}\}$ is  totally bounded since $(\mu_n)_{n\in\mathbb{N}}$ is Cauchy by assumption, so that we can conclude the tightness of $\mathcal{N}$ from Proposition \ref{prop:flat17feb4.27}. As the real valued sequence $(\mu_n(S))_{n\in\mathbb{N}}$ is bounded, we apply Theorem \ref{satz:M+boundedsubsequence} to extract a converging subsequence. However, then the whole sequence has to converge as it is Cauchy.
\end{proofof}

\subsection{Proof of Theorem \ref{satz:respresentation of positive functionals}}

\begin{proof}
We follow the reasoning from Ref. \cite{HilleWorm}. First, we make a preliminary observation that the positivity and the linearity imply that any $T\in BL_+^*(S)$ is monotone, i.e. $T(\psi)\geq T(\varphi)$ for any $\psi,\varphi\in BL(S)$ with $\psi\geq \varphi$.

Concerning the proof, we immediately conclude that $\mathcal{M}^+(S)\subseteq E\cap BL(S)_+^*$. By way of contradiction  assume that there is $T \in E\cap BL(S)_+^*$ such that $T\notin\mathcal{M}^+(S)$. Let $\textbf{1}\in BL(S)$ be the constant function with value $1$. We claim that if an operator $T$ vanishes for $\textbf{1}$, i.e. $T(\textbf{1})=0$, then it vanishes already for for all $\psi\in BL(S)$. Indeed, as $T$ is linear and monotone, we compute
 \begin{align*}
		&|T(\psi)|=|T(\psi^+)-T(\psi^-)|\leq |T(\psi^+)|+|T(\psi^-)|=T(\psi^+)+T(\psi^-)\\
		&=T(\psi^++\psi^-)=T(|\psi|)\leq T(\|\psi\|_{\infty}\cdot \textbf{1})=\|\psi\|_{\infty}T(\textbf{1})=0,
	\end{align*} 
where $\psi^{\pm}=\max \{\pm \psi,0\}$. Hence, $T=0\in \mathcal{M}^+(S)$, which is a contradiction.

So for the rest of the proof we can assume $T(\textbf{1})>0$. According to Theorem \ref{thm:M+_complete_wrt_flat}, $\overline{\mathcal{M}^+(S)}^{\|\cdot\|_{BL^*}}=\mathcal{M}^+(S)$, in particular $\mathcal{M}^+(S)$ is a closed subset of $E$. Thus, the Hyperplane Separation Theorem implies that the closed and convex set $\mathcal{M}^+(S)$ can be strictly separated from $\{T\}\nsubseteq \mathcal{M}^+(S)$ in $E$, i.e. there is $f\in E^*=BL(S)$ (see Theorem \ref{thm:Predual_bounded_Lipschitz}) and $\alpha\in \mathbb{R}$ such that
	\begin{align}
		\label{eq:separatinghyperplane}
		\langle \mu,f\rangle\leq\alpha\, \quad \forall \mu\in\mathcal{M}^+(S) \hspace{0,5cm} \text {and} \hspace{0,5cm} \langle T, f\rangle= 
		T(f)>\alpha.
	\end{align}
As $T(\textbf{1})\delta_x\in\mathcal{M}^+(S)$ for all $x\in S$, we get
	\begin{align*}
		\alpha\geq\langle T(\textbf{1})\delta_x,f\rangle=T(\textbf{1})f(x) \text{ for all } x\in S 
	\end{align*}
and thus, $f\leq\frac{\alpha}{T(\textbf{1})}$. Using the positivity of $T$
	\begin{align*}
		T(f)\leq T\left(\frac{\alpha\textbf{1}}{T(\textbf{1})}\right)=\alpha
	\end{align*}
yields a contradiction to (\ref{eq:separatinghyperplane}) and hence it holds $E\cap BL(S)_+^*=\mathcal{M}^+(S)$.
\end{proof}

\bibliographystyle{abbrv}
\bibliography{sample}


\end{document}